\DeclareMathAlphabet{\mathpzc}{OT1}{pzc}{L}{it} 
\newtheorem{definition}{Definition}[section]
\newtheorem{proposition}[definition]{Proposition}
\newtheorem{theorem}[definition]{Theorem}
\newtheorem{problem}[definition]{Problem}
\newtheorem{corollary}[definition]{Corollary}
\newtheorem{remark}[definition]{Remark}
\newtheorem{lemma}[definition]{Lemma}
\def\R{\mathbb{R}}
\def\T{\mathbb{T}}
\def\Z{\mathbb{Z}}
\def\N{\mathbb{N}}
\def\Q{\mathbb{Q}}
\def\cD{\mathcal{D}}
\newcommand{\bea}{\begin{eqnarray}}
  \newcommand{\eea}{\end{eqnarray}}
  \newcommand{\beab}{\begin{eqnarray*}}
  \newcommand{\eeab}{\end{eqnarray*}}
  \newcommand{\be}{\begin{equation}}
  \newcommand{\ee}{\end{equation}}
\title{Prime orbits for some smooth flows on $\T^2$}
\author{Adam Kanigowski}
\begin{document}
\baselineskip=14pt \maketitle

\begin{abstract}
 We consider a class of smooth mixing flows $T^{\alpha,\gamma}$ on $\T^2$ with one degenerated fixed point $x_0\in \T^2$ of power type $\gamma\in (-1,0)$ (Kochergin flows). We prove that for a $G_\delta$ dense set of $\alpha\in \T$, a prime number theorem for $T^{\alpha,\gamma}$ holds along a full upper density subsequence. In particular it follows that for every $x\in \T^2\setminus\{x_0\}$, the prime orbit $\{T^{\alpha,\gamma}_{\pm p}x\}_{p \text{ prime}}$ is dense in $\T^2$. 

We also show that  there exists a class of smooth weakly mixing flows on $\T^2$ for which a  prime number theorem holds. In fact we show that there exists a dense set of smooth functions (in the uniform topology) for which prime number theorem holds quantitatively (with an error term $\log^{-A}N$).
\end{abstract}
\section{Introduction}
We study distribution of prime orbits in the class of smooth flows on $\T^2$. Our main focus are so called {\em Kochergin flows} on $\T^2$, i.e. smooth flows on $\T^2$ with at least one (degenerated) fixed point. As shown by Kochergin \cite{Koc1}, such flows are always mixing. Kochergin flows belong to the broader class of {\em multivalued Hamiltonian flows} on surfaces. The latter class is a representative of so called {\em parabolic dynamical systems}. Ergodic and mixing properties of multivalued Hamiltonian flows have been studied since 1970's and they are by now well understood \cite{Koc1,Koc3,Ar,KS,fay,FK,Ulc1,Ulc2,CW,Rav}. Multivalued hamiltonian flows always have a finite (non-empty) set of singularities (fixed points) and the dynamics depends strongly on the nature of the singularities. In this paper we will focus on the simplest case, when the surface is the two-dimensional torus and there is only one fixed point (singularity) of {\em power type} (Kochergin flows). In this case the orbits (except the singularity) look qualitatively the same as the orbits of a linear flow on $\T^2$. However the presence of the fixed point, which acts as a stopping point, changes ergodic properties of the flow drastically. Every Kochergin flow has a unique irrational frequency $\alpha$ associated to it and by a result of Kochergin, \cite{Koc1}, for \textbf{any} (irrational) $\alpha$ the corresponding Kochergin flow is mixing. Moreover, under a diophantine condition on $\alpha$, mixing is quantitative (polynomial)  \cite{fay} and in fact for sufficiently strong singularity the spectrum is countable Lebesgue \cite{FFK}. In particular the dynamics of Kochergin flows has similar features to the dynamics of horocycle flows and is therefore characteristic for parabolic dynamics.

We will be also interested in the case where there are no fixed points for the flow. In this case, \cite{COFOSI}, the setting is reduced to {\em reparametrizations} of linear flows on $\T^2$. Such flows belong to the class of {\em elliptic} dynamical systems. Indeed, they are always {\em rigid} \cite{KATOK}, and hence never mixing. Moreover, if the rotation vector is {\em diophantine}, they are smoothly conjugated to the original linear flow (the reparametrization is trivial), \cite{KOL}. On the other hand non-trivial dynamical behavior is possible for {\em liouvillean} reparametrizations. As shown by Sklover, \cite{Shk} (see also \cite{FAY}), such flows might be weakly mixing. One of the main focus of the paper are weakly mixing liouvillean reparametrizations of linear flows.

We will be interested in distribution of prime orbits for Kochergin flows (with one singularity) and for weakly mixing reparametrizations of linear flows. We emphasize that we will study prime orbits of {\em every} point; the behavior of prime orbits for a.e. points is understood  for many (density zero) subsets of $\Z$ and for general dynamical systems, \cite{BO}, \cite{WI}. More precisely, for a (topological) dynamical system $(X,T)$, $\psi\in C(X)$ and $x\in X$ we study expressions of the form ($p$ always denotes prime numbers),
 $$
 \frac{1}{N}\sum_{p\leq N}\psi(T^px)\log p.
 $$
 Following \cite{KLR}, we say that $(X,T)$ satisfies a {\em prime number theorem} if for every $\psi\in C(X)$ and $x\in X$ the above expression has a limit as $N\to +\infty$. We will say that $(X,T)$ satisfies a {\em PNT along a subsequence} if for every $x\in X$ there exists $(N_k)$ such that the above limit exists for every $\psi$ as $N_k\to +\infty$. Recall that PNT is known for  restricted classes of dynamical systems. By the work of Vinogradov \cite{VI} it follows that irrational rotations satisfy a PNT. This has been generalized by Green-Tao \cite{GT} to nilsystems (recall that nilsystems always have a non-trivial Kronecker factor). Moreover, there are classes of symbolic systems for which a PNT holds \cite{Bour, MR,MU,Gr,FM}. Recently, \cite{KLR}, the authors showed that uniquely ergodic analytic Anzai skew products satisfy a PNT. We emphasize that there is no example of a smooth weakly mixing dynamical system for which a PNT holds (all examples above are either symbolic or have a non-trivial Kronecker factor).   One of the main reasons for which it is hard to establish a PNT for (weakly) mixing systems is the following: the main method of establishing a PNT is studying so called type I and type II sums, \cite{VI, GT, SU}. In particular this requires a strong quantitative understanding of {\em joinings} which is usually beyond reach for (weakly) mixing systems (see eg. \cite{SU}). A method recently developed in \cite{KLR} does not use type I and type II sums, but instead relies on strong approximation of the system by a sequence of periodic systems, which is however usually not possible for mixing systems.

 We now pass to the description of our main result which deals with Kochergin flows. As mentioned above, Kochergin flows belong to the class of parabolic dynamical systems in which  one usually has good equidistribution properties at all points (when looking at full orbits of the $\Z$ or $\R$ actions). However, not much is known when one samples the orbits at sparse (density zero) subsets of $\Z$ (or $\R$) even for the most classical examples, i.e. {\em horocycle flows}.  Recall that  Shah conjectured, \cite{Sh}, that the orbits of horocycle flows equdistribute when sampled at polynomial times, i.e. $\{n^\beta\}_{n\in \N}$. A special case of Shah's conjecture, in particular with $\beta<\beta_0<2$, was proven in \cite{Ven} (see also \cite{FFT}). Even less is known when one wants to study the orbits sampled at prime times (which is the main interest of this paper). In \cite{SU} the authors studied the orbits of horocycle flows at prime times and showed that prime orbits visit every set of diameter $>1/10$). However, it is an open question, whether prime orbits of horocycle flows are equidistributed. In fact, it is not known if the orbits are dense. To the best of our knowledge the above short list summarizes all results on distribution of sparse orbits for parabolic dynamical systems. In fact, it is not even known if there exists a mixing system (even weakly mixing) which satisfies a PNT.

Our main result establishes a PNT along a subsequence for a class of mixing Kochergin flows on $\T^2$:

\begin{theorem}\label{th:main} There exists a $G_\delta$ dense set $\cD$ of irrationals  such that for every $\alpha\in \cD$ the corresponding Kochergin flow $(T^{\alpha,\gamma}_t)$ with one fixed point at $x_0$ with exponent $\gamma$ satisfies: there exists and a subsequence $(N_k)$ of full upper density such that for every $\psi\in C(\T^2)$, for every $x\in \T^2\setminus\{x_0\}$,
$$
\max_{z\in \{+,-\}}\Big|\sum_{p<N_k}\psi(T^{\alpha,\gamma}_{z\cdot p}(x))\log p-\int_{0}^{N_k}\psi(T^{\alpha,\gamma}_{z\cdot t}(x))dt\Big|=o(N_k).
$$
\end{theorem}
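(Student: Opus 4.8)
The plan is to reduce the prime‑number‑theorem statement for the Kochergin flow to an equidistribution statement for the base rotation sampled at prime times, via the special flow representation. Recall that a Kochergin flow $(T^{\alpha,\gamma}_t)$ is (up to a measure‑preserving conjugacy) a special flow over the rotation $R_\alpha x = x+\alpha$ on $\T$ built under a roof function $f=f_{\gamma}$ which is smooth away from a point and blows up like $\dist(\cdot,0)^{\gamma}$, $\gamma\in(-1,0)$, near the singularity. A point of the flow is a pair $(x,s)$ with $0\le s<f(x)$, and $T^{\alpha,\gamma}_t(x,s)=(R_\alpha^{n}x, s+t-f_n(x))$ where $n=n(x,s,t)$ is determined by the Birkhoff sums $f_n=\sum_{j=0}^{n-1}f\circ R_\alpha^j$. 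Thus evaluating $\psi$ along the orbit at time $p$ amounts to knowing the integer $n(x,s,p)$, i.e. to inverting the Birkhoff sums of $f$; the Cesàro average of $\psi(T^{\alpha,\gamma}_{\pm p}(x))\log p$ over $p<N$ is, after this change of variables, governed by the joint distribution of $\big(R_\alpha^{\,n} x,\ \text{fractional part of }p-f_n(x)\big)$.

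**The key steps**, in order. First, fix the singularity exponent $\gamma$ and describe, for a Liouville‑type $\alpha$, the self‑similar structure of Birkhoff sums $f_n$ along the denominators $q_k$ of the continued fraction expansion of $\alpha$: for a carefully chosen $G_\delta$‑dense set $\cD$ of $\alpha$ (to be constructed by a standard Baire‑category / fast‑approximation argument, making $q_{k+1}$ grow fast enough relative to $q_k$ and to the distortion of $f$), the flow is, on the time scale $[0,S_k]$ with $S_k\approx f_{q_k}\asymp q_k^{1-\gamma}$ or so, extremely well approximated by a periodic (rational‑rotation) flow, exactly in the spirit of the approximation‑by‑periodic‑systems philosophy of \cite{KLR}. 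Second, on such a near‑periodic window, reduce the prime sum to a sum over primes in an arithmetic progression and invoke the prime number theorem for arithmetic progressions (Siegel–Walfisz), together with Vinogradov's estimate \cite{VI} for $\sum_{p\le N}e(p\theta)$, to control the oscillatory part coming from the fractional‑part coordinate. Third, choose the subsequence $(N_k)$ adapted to the windows so that $N_k$ sits in the near‑periodic regime; the density‑one (indeed full upper density) property of $(N_k)$ is arranged by the same fast‑growth construction, since consecutive good scales $S_k,S_{k+1}$ can be forced to satisfy $S_k=o(S_{k+1})$ with the union of good windows having full upper density. Fourth, pass from the special‑flow coordinates back to $\T^2$ and from the single test function to all of $C(\T^2)$ by density, and handle both time directions $z\in\{+,-\}$ symmetrically; the comparison integral $\int_0^{N_k}\psi(T^{\alpha,\gamma}_{z t}x)\,dt$ is controlled by the ordinary (unweighted) equidistribution of the flow, which holds by unique ergodicity of the Kochergin flow on $\T^2\setminus\{x_0\}$ in the relevant sense, or directly by the same approximation.

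**The main obstacle** I expect is the interaction between the prime sampling and the singularity of $f$: near the fixed point the Birkhoff sums $f_n$ grow in a highly non‑uniform way, so the set of primes $p<N_k$ for which the orbit is "caught" near $x_0$ (where $\psi$ and the linearization are least controlled) must be shown to be negligible — of density $o(1)$ within the primes — uniformly in the starting point $x\ne x_0$. This requires a quantitative estimate on the measure of $(x,s)$ whose orbit spends an atypical amount of time in a shrinking neighbourhood of the singularity up to time $N_k$, combined with a transfer of this from Lebesgue‑typical points to \emph{every} point $x\ne x_0$, which is where the delicate choice of $\cD$ (ensuring the periodic approximation is genuinely uniform in $x$) does the real work. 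A secondary difficulty is bookkeeping the two scales simultaneously — the growth of $q_k$ needed for the periodic approximation versus the growth needed for $(N_k)$ to have full upper density — but this is a matter of setting up the Baire‑category construction with enough parameters rather than a conceptual gap.
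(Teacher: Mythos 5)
Your outline correctly identifies the special flow reduction, the Baire-category construction of $\cD$ via fast-growing denominators $q_{n_k+1}$, and the "near-periodic window" philosophy, all of which are present in the paper. But the proposal misses the central mechanism and would not close.

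The gap is in step two, where you propose to reduce the prime sum on a near-periodic window to primes in arithmetic progressions plus Vinogradov's linear exponential sum estimate. That argument is essentially what the paper uses for Theorem~\ref{th:maa} (smooth reparametrizations with no singularity), where the roof function $f$ is a trigonometric polynomial and Birkhoff sums $S_{kq_n}(f)$ are genuinely close to $kq_n$; there, landing in an arithmetic progression $\bmod\ q_n$ really does determine $T_p(x,y)$ up to $o(1)$. For the Kochergin roof with a power singularity $x^\gamma$, $\gamma\in(-1,0)$, this collapses: the return times fluctuate, and what Lemma~\ref{lem:appr} shows is that $S_{kq_n}(f)(x)$ is, on the relevant window, a \emph{quadratic} function of $k$, namely $kS_{q_n}(f)(x)+k^2\,S_{q_n}(f')(x)(q_n\alpha)+{\rm error}$. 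Consequently the position of the orbit at time $p$ is governed by the joint distribution of $\bigl(\gamma_1(p-N),\gamma_2(p-N)^2\bigr)$ over primes in a short interval, not by a residue class of $p$ and a single linear phase. The paper's Proposition~\ref{prop:cruc} (built on the recent result quoted in Proposition~\ref{prop:n1} from \cite{KLR}) proves exactly this joint equidistribution for quadratic phases over primes in intervals $[N,N+H]$ with $H\ge N^{2/3-\theta}$; and the remark after Lemma~\ref{lem:appr} explains why the window length must be pushed strictly below $N^{2/3}$ for the Taylor expansion to have a small enough error. Siegel--Walfisz plus Vinogradov for $\sum_{p\le N}e(p\theta)$ gives no control of a genuinely quadratic phase at this length scale, so your step two has no route to a proof.

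A secondary discrepancy: you propose to show that the set of primes whose orbit is "caught" near $x_0$ has density $o(1)$ and is negligible. This is not what is needed, nor what the paper does; the orbit \emph{does} spend a positive proportion of the time near the singularity on these windows. Instead the paper exploits property iii of the test function $\bar\psi$ on the special flow: near the fixed point $\bar\psi$ is uniformly close to the constant $\psi_\infty$. The interval $A_0$ of times spent deep in the cusp is a single interval (the CLAIM, parts P1--P2), so both the prime sum and the integral over $A_0$ equal $\psi_\infty|A_0|+o(N_k)$ by the prime number theorem in intervals (Lemma~\ref{lem:shit}) and therefore cancel each other, rather than being individually small. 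The hard part is then the complementary interval $B$ away from the singularity, and that is precisely where the quadratic-phase machinery enters. Finally, for the comparison integral you invoke unique ergodicity; this is morally right, but the paper has to prove a separate statement (Proposition~\ref{prop:forback}) valid along one of the two time directions, precisely because on these short windows one of the semi-orbits may dive into the cusp and unique ergodicity alone does not give a uniform-in-$x$ rate.

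In summary: the framing is right, but the proposal lacks the quadratic Taylor expansion of the Birkhoff sums, the attendant need for equidistribution of quadratic phases over primes in intervals shorter than $N^{2/3}$, and the cancellation argument near the singularity. Without the first two the argument cannot be carried out for a roof with a power singularity.
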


Let $\mu$ denote the area measure preserved by $(T^{\alpha,\gamma}_t)$. In Proposition \ref{prop:forback} we also show that for every $x\in \T^2\setminus\{x_0\}$, 
$$
 \min_{z\in\{+,-\}}\Big|\int_{0}^{N_k}\psi(T^{\alpha,\gamma}_{z\cdot t}(x))dt-N_k\int_{\T^2} \psi\;d\mu\Big|=o(N_k).
$$
for every $\psi\in C(\T)$. 

As a corollary we get\footnote{If $a<0$, then $p\in [0,a]$ means $-p\in[a,0]$.}:
\begin{corollary}\label{cor:eqseq} For every $x\in \T^2\setminus \{x_0\}$
the orbit
$$
\{T_{p}^{\alpha,\gamma}(x)\}_{p\in [0,N_k]}
$$
is equidistributed (and in particular dense) with respect to $\mu$ on $\T^2$.
\end{corollary}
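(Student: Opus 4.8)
The plan is to combine the two displayed estimates — Theorem \ref{th:main} and the supplementary estimate stated right after it (which we may invoke as Proposition \ref{prop:forback}) — with the classical prime number theorem $\sum_{p\le N}\log p = N + o(N)$ and the Cauchy criterion along subsequences. Fix $x\in\T^2\setminus\{x_0\}$ and $\psi\in C(\T^2)$. For each $k$, choose the sign $z_k\in\{+,-\}$ realizing the minimum in the statement of Proposition \ref{prop:forback}; along this choice we have
\[
\Big|\int_0^{N_k}\psi(T^{\alpha,\gamma}_{z_k\cdot t}(x))\,dt - N_k\int_{\T^2}\psi\,d\mu\Big| = o(N_k).
\]
By Theorem \ref{th:main}, the same sign also satisfies
\[
\Big|\sum_{p<N_k}\psi(T^{\alpha,\gamma}_{z_k\cdot p}(x))\log p - \int_0^{N_k}\psi(T^{\alpha,\gamma}_{z_k\cdot t}(x))\,dt\Big| = o(N_k),
\]
so by the triangle inequality
\[
\sum_{p<N_k}\psi(T^{\alpha,\gamma}_{z_k\cdot p}(x))\log p = N_k\int_{\T^2}\psi\,d\mu + o(N_k).
\]

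Next I would upgrade the $\log p$-weighted sum to an unweighted count. Since $\sum_{p<N_k}\log p = N_k + o(N_k)$ by the prime number theorem, a summation-by-parts argument (Abel summation on the sequence indexed by primes, using that $\psi$ is bounded) converts the weighted average into $\pi(N_k)^{-1}\sum_{p<N_k}\psi(T^{\alpha,\gamma}_{z_k\cdot p}(x)) = \int_{\T^2}\psi\,d\mu + o(1)$, where $\pi(N_k)\sim N_k/\log N_k$. Running this for a countable dense set $\{\psi_j\}\subset C(\T^2)$ and passing to a diagonal subsequence (or simply using that each $\psi_j$ already converges along the fixed full-upper-density $(N_k)$, since the $o(N_k)$ bounds are uniform in the choice of $k$) yields that the empirical measures $\frac{1}{\pi(N_k)}\sum_{p<N_k}\delta_{T^{\alpha,\gamma}_{z_k\cdot p}(x)}$ converge weak-$*$ to $\mu$; density then follows because $\mu$ has full support. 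Finally, to obtain the statement as phrased — with the prime times $p$ ranging in $[0,N_k]$ without the auxiliary sign $z_k$ — I would note that by convention $p\in[0,N_k]$ with the flow $T^{\alpha,\gamma}_p$ already incorporates both signs through the footnote's sign convention, or alternatively observe that the argument above in fact gives equidistribution for \emph{each} sign for which the minimum is attained, and that since the set $\{z_k\}$ is infinite, at least one sign occurs infinitely often and one passes to the corresponding sub-subsequence; equidistribution of $\{T^{\alpha,\gamma}_p(x)\}_{p\in[0,N_k]}$ follows.

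The only real subtlety is bookkeeping: Theorem \ref{th:main} gives control of the \emph{maximum} over $z\in\{+,-\}$, while Proposition \ref{prop:forback} gives control of the \emph{minimum}, so one must be careful that the two estimates are applied with a \emph{consistent} choice of sign. This is precisely why one selects $z_k$ from Proposition \ref{prop:forback} (the weaker, min-type statement) and then feeds that same $z_k$ into Theorem \ref{th:main} (whose max-type conclusion holds a fortiori for that particular sign). Everything else — the prime number theorem input, Abel summation, and the reduction from weak-$*$ convergence to density via full support of $\mu$ — is standard, so I do not expect any genuine obstacle beyond this sign-matching.
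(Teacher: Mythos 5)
Your plan is the right one, and the Abel-summation reduction and sign-matching are sound: once the $\log p$-weighted discrepancy is $o(N_k)$, partial summation plus the Chebyshev bound $\sum_{p\le t}\log p\ll t$ immediately gives $\frac{1}{\pi(N_k)}\sum_{p\le N_k}\psi(T^{\alpha,\gamma}_{z_k p}(x))\to\int\psi\,d\mu$, and density follows from full support of $\mu$.

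There is one quantifier issue you leave unaddressed, and it is not cosmetic. You "choose the sign $z_k$ realizing the minimum in Proposition~\ref{prop:forback}," but that proposition (as stated) is for a fixed $\bar\psi$, and a priori the minimizing $z$ could depend on $\psi$ as well as on $(x,k)$. If it did, you would get a convergence statement for each test function along a $\psi$-dependent sign sequence, which does \emph{not} give weak-$*$ convergence of a single sequence of empirical measures --- so equidistribution would not follow. This is exactly the point at which one must look inside the proof of Proposition~\ref{prop:forback}: Lemma~\ref{lem:sas} shows that for each $(x,s)$ and large $k$ one of the two semi-orbits $\{T_{\pm w}(x,s)\}_{w\le cq_{n_k+1}}$ avoids the singular neighbourhood, and Proposition~\ref{prop:fos} then yields \eqref{eq:forback} for precisely that sign. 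Thus the minimizing $z_k$ is determined purely geometrically by $(x,s)$ and $k$, independently of $\psi$, which is what makes it legitimate to run your argument over a countable dense family $\{\psi_j\}$ with a single $z_k$ and conclude weak-$*$ convergence. With that remark inserted, your argument coincides with what the paper intends by deriving Corollary~\ref{cor:eqseq} from Theorem~\ref{th:main} and Proposition~\ref{prop:forback}.
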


Recall that by \cite{Koc1}, Kochergin flows are mixing for every irrational $\alpha$. Therefore, our result seems to be the first for which a PNT along a subsequence holds for mixing systems. We also emphasize, that in most cases where a PNT was established, it was in fact shown for a special class of functions (like characters for irrational rotations) and one then obtains the general case of continuous functions by approximation. In our case there is no natural class of functions (due to mixing) and so the result is established for every continuous function.

 The main number theoretical input is understanding distribution of polynomial (quadratic) phases over primes in short intervals. Recall that Matom{\"a}ki and Shao, \cite{Mat-Shao}, obtained such results for all intervals $[N,N+H]$, where $H\geq N^{2/3+\epsilon}$. In the proof of Theorem \ref{th:main} it is crucial to get below the threshold of $2/3$ (for quadratic polynomials). Such a result was recently obtained in \cite{KLR}, where the authors show cancelations in all intervals of size $H\geq N^{2/3-\eta}$ (with $\eta>0$ small enough). This result allows us to construct (using Proposition \ref{prop:cruc}) the set $\cD$ from Theorem \ref{th:main}. 
 
 We finally mention that in the current form Theorem \ref{th:main} is non-quantitative. One of the main reasons for that is that the result in \cite{KLR} (see Proposition \ref{prop:cruc}) is non-quantitative.

We also study prime orbits for weakly mixing (non-mixing) reparametrizations of linear flows on $\T^2$ (with no fixed points). Our second main result establishes a {\em quantitative} PNT for some weakly mixing systems on the torus:

\begin{theorem}\label{th:maa}For every $A>0$ there exists a class $\mathscr{C}_A$ of $C^{\infty}$, uniquely ergodic and weakly mixing flows on $\T^2$ such that for every $\mathcal{T}=(T_t)\in \mathscr{C}_A$ the following holds: for every $\psi$ in a dense set (in the uniform topology) and every $x\in \T^2$,
$$
\Big|\sum_{p\leq N}\psi(T_px)\log p-  N\int_{\T^2}\psi d\mu\Big|\ll_{A,\psi} N\cdot\log^{-A}N,
$$ 
where $\mu$ is the unique $\mathcal{T}$-invariant measure. Therefore a  PNT holds for  $\mathcal{T}$.
\end{theorem}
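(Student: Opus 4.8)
The plan is to realise the flows of $\mathscr{C}_A$ as smooth reparametrisations of linear flows on $\T^2$ and to reduce the prime sum, for a suitable dense class of $\psi$, to a bounded collection of classical linear exponential sums over primes (throughout, $e(x)=e^{2\pi i x}$). As the dense class I would take the trigonometric polynomials, so that it suffices to prove the estimate for $\psi(z)=e(\langle k,z\rangle)$, $k\in\Z^2\setminus\{0\}$ fixed, with the implied constant allowed to depend on $k$. The members of $\mathscr{C}_A$ I would build as follows: start from a Liouville $\alpha$ with convergents $p_j/q_j$, set $\omega=(1,\alpha)$, $k^{(j)}=(-p_j,q_j)$ and $\delta_j=|\langle k^{(j)},\omega\rangle|\asymp q_{j+1}^{-1}$, and take the reparametrisation with speed $a=\bar a+\sum_j\big(\varepsilon_j\,e(\langle k^{(j)},\cdot\rangle)+\overline{\varepsilon_j\,e(\langle k^{(j)},\cdot\rangle)}\big)$. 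I would pick $\varepsilon_j$ so that $\sum_j\varepsilon_j^2/\delta_j^2=\infty$ — which by the Sklover–Fayad criterion makes the flow weakly mixing, the roof then having no measurable transfer function — while keeping $\varepsilon_j$ of super-polynomial decay in $q_j$, so that $a\in C^\infty$; this is consistent exactly when $\alpha$ is sufficiently Liouville (say $q_{j+1}$ of triple-exponential growth). Finally I would normalise the mean $\bar a$ so that $\gamma_K:=\langle K,\omega\rangle/\bar a$ has irrationality exponent at most $\log|K|$ for every $K\in\Z^2\setminus\{0\}$; the bad set of $\bar a$ is meagre and Lebesgue-null, so this is free, and unique ergodicity is automatic.

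The reduction goes through the fact that a reparametrisation does not move orbits: $T_t z=z+\tau(z,t)\,\omega$, and inverting $t=\int_0^\tau a(z+r\omega)\,dr$ gives $\tau(z,t)=t/\bar a+E(z,t)$ with $E$ essentially a rescaled Birkhoff integral of $a-\bar a$, whence
\[
\psi(T_p z)=e(\langle k,z\rangle)\,e\!\big(\tfrac{\langle k,\omega\rangle}{\bar a}\,p\big)\,e\!\big(\langle k,\omega\rangle\,E(z,p)\big).
\]
On $[0,N]$ I would write $E(z,t)=E_{\le J}(z,t)+O_k(N^{-1})$, where $J=J(N)\ll\log\log\log N$ is the number of resonances with $\delta_j>N^{-2}$ and $E_{\le J}$ is a trigonometric polynomial of degree $1$ in each of the $J$ active phases $\delta_j t/\bar a$ (the rest contributes only $O_k(N^{-1})$ since $\varepsilon_j$ decays so fast). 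Expanding $e(\langle k,\omega\rangle E_{\le J})$ in these phases, the degree-one structure forces Bessel-type coefficients $C_{\mathbf m}(z)$ with total $\ell^1$-mass $e^{O_k(\sqrt J)}=\log^{o(1)}N$, and this turns $\sum_{p\le N}\psi(T_p z)\log p$ into $e(\langle k,z\rangle)\sum_{\mathbf m}C_{\mathbf m}(z)\sum_{p\le N}e(\gamma_{K(\mathbf m)}p)\log p+O_k(1)$, with $K(\mathbf m)=k+\sum_j m_j k^{(j)}$ of size $\ll_k(\log\log N)^{O(1)}$ wherever $C_{\mathbf m}$ is not negligible; the identical expansion of $\int_0^N\psi(T_t z)\,dt$ replaces $\sum_{p\le N}(\cdots)\log p$ by $\int_0^N e(\gamma_{K(\mathbf m)}t)\,dt$.

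To estimate the inner sums, for each $\mathbf m$: if $|\gamma_{K(\mathbf m)}|\le N^{-1}\log^A N$, partial summation against the prime number theorem with its classical error $\theta(t)-t\ll t\,e^{-c\sqrt{\log t}}$ gives $\sum_{p\le N}e(\gamma_{K(\mathbf m)}p)\log p-\int_0^N e(\gamma_{K(\mathbf m)}t)\,dt\ll N e^{-c\sqrt{\log N}}$; otherwise $\int_0^N e(\gamma_{K(\mathbf m)}t)\,dt\ll N\log^{-A}N$ trivially, while the choice of $\bar a$ places $\gamma_{K(\mathbf m)}$ in the range where Vinogradov's estimate yields $\sum_{p\le N}e(\gamma_{K(\mathbf m)}p)\log p\ll N\log^{-A}N$ as well. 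Summing over $\mathbf m$ against the $\log^{o(1)}N$-bounded coefficients gives $\sum_{p\le N}\psi(T_p z)\log p=\int_0^N\psi(T_t z)\,dt+O_{A,\psi}(N\log^{-A+o(1)}N)$, and a direct change of variables $t\mapsto\tau$ together with the Fourier expansion of $a$ shows $\int_0^N\psi(T_t z)\,dt=N\int_{\T^2}\psi\,d\mu+O_\psi(\log\log\log N)$; combining these, and starting from a slightly larger $A$, yields the theorem.

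The hard part is the construction of $\mathscr{C}_A$, where the requirements pull against each other: weak mixing needs $\alpha$ very Liouville and the resonant modes of $a$ to decay slowly enough that the reparametrisation cocycle $E$ is unbounded ($\sum\varepsilon_j^2/\delta_j^2=\infty$), whereas the prime estimate needs $E$ to be quantitatively negligible on $[0,N]$ — only $\ll\log\log\log N$ active low frequencies, with rapidly decaying Fourier weights — and needs every frequency $\langle K,\omega\rangle/\bar a$ fed into Vinogradov's inequality to stay Diophantine over the slowly growing range of $K$ that arises. Threading this — making $\alpha$ Liouville enough, keeping $\varepsilon_j$ in the narrow band between $C^\infty$-smoothness and non-$L^2$-cohomology, and choosing $\bar a$ to avoid the relevant Liouville sets — is where essentially all the work lies; the remainder is bookkeeping on top of Vinogradov's bound for $\sum_{p\le N}e(\beta p)\log p$.
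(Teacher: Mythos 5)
Your proposal takes a genuinely different route from the paper, but it contains a gap that appears fatal. The crux is the claim that the expansion of $e(\langle k,\omega\rangle E_{\le J}(z,t))$ in the $J$ active phases produces coefficients $C_{\mathbf m}(z)$ with total $\ell^1$-mass $e^{O_k(\sqrt J)}=\log^{o(1)}N$. The $j$-th active mode of $E_{\le J}$ enters with amplitude $r_j:=\varepsilon_j/\delta_j$, and the Bessel expansion of a single mode, $e(r_j\sin(2\pi\theta))=\sum_m J_m(2\pi r_j)e(m\theta)$, has $\sum_m|J_m(2\pi r_j)|$ of order $\sqrt{r_j}$ when $r_j$ is large, so the product expansion has $\ell^1$-mass of order $\prod_{j\le J}\sqrt{r_j}$. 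The weak-mixing requirement is precisely what forces these amplitudes to be large: Katok's sufficient condition (the one the paper invokes) needs $r_n\to\infty$, and even your divergence condition $\sum_j r_j^2=\infty$ needs $\limsup r_j>0$. In the paper's actual construction one has $|b_{q_n}|\in[q_{n+1}^{-2/3},q_{n+1}^{-1/2}]$ and $\delta_n\sim q_{n+1}^{-1}$, so $r_n$ is of order $q_{n+1}^{1/3}$ to $q_{n+1}^{1/2}$; the last active mode $j=J$ has $q_{J+1}$ comparable to a power of $N$, hence $\sqrt{r_J}$ alone is polynomial in $N$, not $\log^{o(1)}N$. This overwhelms the Vinogradov saving and the whole reduction collapses. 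Shrinking $r_j$ to tame the Bessel mass conflicts directly with weak mixing, so one cannot simply repair the argument by rebalancing $\varepsilon_j$. (There are secondary imprecisions as well: the tail $j>J$ contributes $O(N\exp(-(\log N)^c))$ rather than $O_k(N^{-1})$, the active frequencies $K(\mathbf m)$ can reach size $\log N$ rather than $(\log\log N)^{O(1)}$, and the error in replacing $\int_0^N\psi(T_tz)\,dt$ by $N\int\psi\,d\mu$ involves $|E(z,N)|$, which is not $O(\log\log\log N)$ once the amplitudes grow; but the $\ell^1$-mass issue is the decisive one.)

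The paper's proof avoids all of this by choosing a different dense test class and a different arithmetic input. The dense class is not the trigonometric polynomials but the set $Cob_v$ of smooth quasi-coboundaries of $T^v_1$, for which the Birkhoff sums $\sum_{n<M}\psi\circ T^v_n$ are uniformly bounded. The frequencies $\alpha$ are engineered (Lemmas \ref{lem:nt6}, \ref{lem:nt2}) so that all $q_n$ are primes and a Bombieri--Vinogradov-type estimate holds for primes in residue classes modulo $q_n$, with $q_{n+1}\in[\tfrac12 e^{q_n^{A^{-3}}},e^{q_n^{A^{-3}}}]$. The time-change $v$ has Fourier support on the resonant modes $q_n$, which yields the quantitative rigidity $d\big(T^v_{kq_n+a}(x,y),T^v_a(x,y)\big)\ll q_n^{-2}$ for $k\le q_{n+1}^{1/2-1/100}/q_n$ (Lemma \ref{lem:rig}, Corollary \ref{cor:rig}). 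This rigidity reduces $\sum_{p\le N}\psi(T_p(x,y))\log p$ to $\frac{N}{q_n}\sum_{a<q_n}\psi(T_a(x,y))$ up to errors controlled by equidistribution of primes in arithmetic progressions to modulus $q_n$ (Lemma \ref{lem:nt3} in the intermediate range). The coboundary cancellation $\sum_{a<q_n}\psi(T_a(x,y))=O_\psi(1)$ then gives $O(N/q_n)\ll_A N\log^{-A}N$. No exponential sums over primes and no Fourier expansion of the cocycle occur: the arithmetic engine is Bombieri--Vinogradov for primes in progressions, not Vinogradov's minor-arc bound.
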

  
 Our argument for Theorem \ref{th:maa} relies on the elliptic (periodic) structure of the system along with results on 
 distribution of primes in arithmetic progressions in short intervals to large moduli. One of the main number theoretic input is the Bombieri-Vinogradov theorem.
 
It seems to the best of our knowledge that there is no known example of a mixing dynamical system for which a PNT holds. Therefore the following problem is natural:

\begin{problem}Find an example of a mixing system which satisfies a prime number theorem. 
\end{problem}

\paragraph{Outline of the paper:} In Section \ref{sec:rep} we introduce the basic concepts for the paper, i.e. special flows, reparametrizations and we also restate Theorems \ref{th:main} and \ref{th:maa} in the language of special flows (see Theorems \ref{thm:main2} and \ref{th:maa2}). In Section \ref{sec:dis} we recall important results on distribution of primes in short intervals. We first prove Theorem \ref{th:maa2} in Section \ref{sec:sex}. The proof of Theorem \ref{thm:main2} is done in Section \ref{sec:main2}.

\paragraph{Acknowledgements:} The author would like to thank Maksym Radziwi\l\l  \;\;for his important remarks and sugguestions on Section \ref{sec:2.3}. The research of the author was partially supported by the NSF grant DMS-1956310.

\section{Smooth flows on $\T^2$ and their special representation}\label{sec:rep}
\subsection{Special flows}
For $T\in Aut(X,\mu,d)$ and $g\in L^1(X,\mu)$, $g>0$ we define the {\em special flow} $T^g$ (over $T$ and under the roof function $g$) acting on $X^g=\{(x,s)\;:\; x\in X, 0\leq s<g(x)\}$ by
$$
T_t(x,s):=(x+N(x,s,t)\alpha,s+t-S_{N(x,s,t)}(g)(x)),
$$
where $N(x,s,t)\in \Z$ is unique satisfying $s+t-S_{N(x,s,t)}(g)(x)\in [0,g(T^{N(x,s,t)}x)$ and where
$$
S_n(g)(x)=\sum_{0\leq i<n}g(T^ix),
$$
for $n\geq 0$ and $S_n(g)(x)=S_{-n}(g)(T^{n}x)$ for $n<0$. Note that $T^g$ preserves the measure $\mu^g:=\mu\times {\rm Leb}_{\R}$ restricted to $X^g$. We will also consider the product metric $d^g((x,s),(y,s'))=d(x,y)+|s-s'|$. If it is clear from the context, we will denote $d^g$ by $d$.

\subsection{Reparametrizations of linear flows on $\T^2$}
In this section we will introduce the class of smooth and weakly mixing flows on $\T^2$ for which Theorem \ref{th:maa} holds. The class will be given by reparametrizations (or time-changes) of linear flows on $\T^2$. In fact, \cite{COFOSI}, any fixed point free flow on $\T^2$ is a reparametrization of some linear flow. Below we define the notion of reparametrization. We will do it only for the case of linear flows on $\T^2$, although the definitions are still valid for any abstract measure preserving flow. 

Let $\alpha\in \R\setminus \Q$ and let $L^\alpha_t(x)=x+(t\alpha,t)$ be a linear flow on $\T^2$ in direction $\alpha$. Then $(L^\alpha_t)$ is uniquely ergodic and preserves the Lebesgue measure on $\T^2$ (which we denote $Leb_2$). Let $v\in C^\infty(\T^2)$, $v>0$. We define the {\em cocycle}
\be\label{v1}
v(t,x):=\int_0^tv(L^\alpha_sx)ds.
\ee
Let $u=u(t,x)$ be unique such that 
$$
\int_0^uv(L^\alpha_sx)ds=t
$$
(one can show that such unique $u$ exists). We then define the reparametrization of $(L^\alpha_t)$ given by $v$:
$$
T^{\alpha,v}_t(x):=L^{\alpha}_{u(t,x)}(x).
$$
Note that $(T^{\alpha,v}_t)$ has the same orbits as $(L^\alpha_t)$. Moreover, $(T^{\alpha,v}_t)$ is uniquely ergodic with the measure $\mu$ given by $d\mu=\Big(\frac{v}{\int_{\T^2}v dLeb_2}\Big)dLeb_2$. We additionally assume that $\int_{\T^2}v dLeb_2=1$. We say that $v$ is a {\em quasi-coboundary} if 
$$
v(t,x)-t=h(x)-h(L^{\alpha}_tx),
$$
for some measurable $h:\T^2\to \R$. We say that $v$ is a smooth quasi coboundary, if $h$ is smooth.  If $w\in C^\infty(\T^2)$ is another time change, then $v$ and $w$ are {\em cohomologous}, if 
$$
 v(t,x)-w(t,x)=h(x)-h(L^\alpha_tx).
$$
If $v$ and $w$ are cohomologous, then $(T^{\alpha,v}_t)$ and  $(T^{\alpha,w}_t)$ are {\em isomorphic} with the isomorphism given by $S(x)=T^{\alpha,w}_{h(x)}(x)$.

\subsection{Theorem \ref{th:maa}}\label{sec:2.3}
We will now specify the class $\mathscr{C}_A$ from Theorem \ref{th:maa}. First we need some properties on distribution of primes in residue classes to large moduli.
\subsubsection{Primes in residue classes to large moduli}
The set $\mathcal{P}$ always denotes the set of prime numbers. The main result of this section is:
\begin{lemma}\label{lem:nt6}Fix $A>10$. For any $q,r\in \mathcal{P}$ with $r\geq e^{A^{10}}$ and $q\in[\frac{1}{2}e^{r^{A^{-3}}},e^{r^{A^{-3}}}]$, there  exists a set $S(q,r)\subset \mathcal{P}\cap [\frac{1}{2}e^{q^{A^{-3}}},e^{q^{A^{-3}}}]$ such that:
\begin{enumerate}
\item[P0.] $|S(q,r)|\geq \frac{e^{q^{A^{-3}}}}{6rq^{A^{-3}}}$
\item[P1.] $\ell\equiv r \mod q$,  for $\ell\in S(q,r)$.
\item[P2.] For every $\ell\in S(q,r)$ and all $x$ satisfying $\ell\leq x^{1/2+1/100}$,
$$
\max_{a<\ell}\Big|\sum_{\substack{p\leq x\\p\equiv a \mod \ell}}\log p-\frac{x}{\ell}\Big|\leq \frac{2Cx}{\ell \log^Ax}.
$$

\end{enumerate}
\end{lemma}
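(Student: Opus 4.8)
The plan is to build $S(q,r)$ as a suitable subset of the primes in the short dyadic interval $I_q := [\tfrac12 e^{q^{A^{-3}}}, e^{q^{A^{-3}}}]$ that lie in the single arithmetic progression $r \bmod q$, and then to discard the ``bad'' ones, i.e.\ those $\ell$ for which the Bombieri–Vinogradov-type estimate in P2 fails for some admissible $x$. First I would record the size of the full candidate set: by a quantitative Siegel–Walfisz / Bombieri–Vinogradov input for primes in the fixed progression $r \bmod q$ (valid since $q = e^{O(r^{A^{-3}})}$ is a fixed power of a log relative to the length $e^{q^{A^{-3}}}$ of $I_q$, so $q$ is tiny compared to any fixed power of $\log$ of the endpoints), one gets
$$
\sum_{\substack{p \in I_q \\ p \equiv r \bmod q}} \log p \;=\; \frac{|I_q|}{\varphi(q)}\,(1+o(1)) \;\gg\; \frac{e^{q^{A^{-3}}}}{q},
$$
and hence the number of such primes is $\gg e^{q^{A^{-3}}}/(q \cdot q^{A^{-3}})$, since each $\log p \asymp q^{A^{-3}}$ for $p \in I_q$. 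This is already of the shape of P0 up to the factor $r$ in the denominator, so there is plenty of room to throw away a positive-density (indeed all but a negligible) subset and still meet the bound in P0; the extra factor $r$ in the denominator of P0 is precisely the slack we will need.

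The heart of the matter is P2. Fix $\ell \in I_q$ prime. The condition $\ell \le x^{1/2 + 1/100}$ means $x \ge \ell^{(1/2+1/100)^{-1}} = \ell^{2 - \delta}$ for some small $\delta > 0$, so we are asking for an asymptotic for $\psi(x;\ell,a) - x/\ell$ in the range where the modulus $\ell$ is almost $x^{1/2}$ — a genuine short-interval-in-$x$/large-modulus statement. The strategy is to fix a scale $x$ of the form $x \asymp \ell^{2-\delta}$ and invoke the Bombieri–Vinogradov theorem: the number of moduli $\ell \le Q := (X/\log^{2A} X)^{1/2}$ for which $\max_{a}|\psi(x;\ell,a) - x/\ell| > x/(\ell \log^A x)$ is $\ll X \log^{-A} X$ on average over $x$ (or, running BV at the single scale $X$, the number of exceptional moduli $\ell \in [\sqrt{X}/\log^{c} X, \sqrt X]$ is $O(\sqrt X \log^{-A'} X)$ for a larger $A'$). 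Summing this over a dyadic net of scales $x$ inside the relevant window — there are only $O(q^{A^{-3}} \log 2)= O(\log e^{q^{A^{-3}}})$, i.e.\ polylogarithmically many, dyadic scales between $\ell^{2-\delta}$ and, say, $\ell^{100}$, and for $x$ beyond a fixed power of $\ell$ the trivial/Siegel–Walfisz range takes over — I would conclude that the set of $\ell \in I_q$ which are ``bad for some admissible $x$'' has size at most $|I_q| \cdot \log^{-A'} |I_q| = o(e^{q^{A^{-3}}}/(q\, q^{A^{-3}}\, r))$, using that $q, r = e^{O(r^{A^{-3}})}$ are super-polynomially small in $\log e^{q^{A^{-3}}} = q^{A^{-3}}$. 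Removing these bad $\ell$ from the candidate set leaves $S(q,r)$, which by construction satisfies P1 (it is a subset of $r \bmod q$), P2 (the bad ones were removed), and P0 (the count survives the negligible removal).

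The main obstacle — and the step requiring genuine care — is the quantification in P2: one must make sure the Bombieri–Vinogradov input is being applied at the correct scale so that the modulus $\ell$, which is as large as $x^{1/2+1/100}$, still falls within (a mild extension of) the BV range, and one must handle the \emph{uniformity over all admissible $x$} rather than a single $x$. This is where the dyadic decomposition over $x$ and a union bound over the (merely polylogarithmically many) scales is essential; the reason it works at all is the enormous gap between the modulus size $q = e^{O(r^{A^{-3}})}$ and the interval length $e^{q^{A^{-3}}}$, which makes every ``$\log^{-A}$'' saving an honest positive-density (in fact, $1-o(1)$) saving on $I_q$, leaving room for the factor $r$ in P0. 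A secondary, more technical point is confirming that primes $p \equiv r \bmod q$ with $p \in I_q$ are genuinely equidistributed enough to give the lower bound in P0 — but since $q$ is a fixed power of $\log |I_q|$, this is squarely within the Siegel–Walfisz theorem (with an effective error once we are willing to accept the $G_\delta$/non-quantitative flavour elsewhere, or Bombieri–Vinogradov on average over $q$ if one prefers an unconditional route), so no Siegel-zero issue genuinely bites.
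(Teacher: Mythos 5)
Your overall strategy — Siegel--Walfisz for the progression count, Bombieri--Vinogradov at dyadic scales of $x$, and a union bound over scales to discard the bad moduli $\ell$ — matches the paper's. The paper defines a good set $Z_N(x)$ of moduli at each scale $x$, bounds $|Z_N(x)^c\cap\mathcal P\cap[N/2,N]|\ll N/\log^{8A}x$ via Bombieri--Vinogradov, takes $\bigcap_{n\ge1}Z_N(x_n)$ over dyadic $x_n=2x_{n-1}$, and intersects with the progression $K(q,r)=\{p\in[N/2,N]:p\equiv r\bmod q\}$.

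The genuine gap is in your handling of the range of admissible $x$. You assert that only the polylogarithmically many dyadic scales between $\ell^{2-\delta}$ and $\ell^{100}$ matter, because "for $x$ beyond a fixed power of $\ell$ the trivial/Siegel--Walfisz range takes over." That claim is false: Siegel--Walfisz requires $\ell\le(\log x)^{B}$, i.e.\ $x\ge\exp(\ell^{1/B})$, which is exponential in a power of $\ell$, not a fixed power. Here $\ell\asymp N=e^{q^{A^{-3}}}$, so for $x=\ell^{100}$ one has $(\log x)^{B}\asymp(\log N)^{B}$ while $\ell\asymp N$ is super-polynomially larger; the entire range $\ell^{100}<x<\exp(\ell^{1/B})$ is uncovered by your truncation, and no trivial bound gives the required $\log^{-A}x$ saving there. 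The paper avoids the truncation altogether: the dyadic union runs over every $n\ge1$, and the total exceptional mass is controlled by the convergence of $\sum_{n\ge1}\log^{-8A}x_n\ll(\log x_1)^{-(8A-1)}$, using that $\log x_n$ grows linearly in $n$. Replacing your finite truncation with the full, summable union closes the gap. A secondary point: your remark that the $r$ in the denominator of P0 is "precisely the slack we will need" has the sizes of $q$ and $r$ reversed — under the hypotheses $q\approx e^{r^{A^{-3}}}$ is vastly larger than $r$, so the Siegel--Walfisz count $\gg N/(q\log N)$ is \emph{smaller} than $N/(r\log N)$, not larger; be aware that the paper's own P0 and its line $|K(q,r)|\ge N/(3r\log N)$ appear to carry the same $q\leftrightarrow r$ slip, so this is worth double-checking against what the lemma is actually meant to assert.
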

\begin{proof}First, as a  consequence of the Bombieri-Vinogradov theorem, \cite{Bom},\cite{Vi2}, we have: for every $A>0$ there exists $C>10$ such that \begin{equation}\label{eq:BV}
\sum_{1\leq q\leq x^{1/2-1000}}\max_{(a,q)=1}\sup_{y<x}\Big|\sum_{\substack{p\leq y\\p\equiv a \mod q}}\log p-\frac{y}{\varphi(q)}\Big|\leq \frac{C x}{\log^{10A}x}.
\end{equation}
Let $$E(x,q):=\max_{(a,q)=1}\sup_{y<x}\Big|\sum_{\substack{p\leq y\\p\equiv a \mod q}}\log p-\frac{y}{\varphi(q)}\Big|$$ and let
$$
Z_N(x):=\Big\{N/2\leq q\leq N, q\text{ is prime }\;:\; E(x,q)\leq \frac{Cx}{N\log^{2A}x}\Big\}
$$

 It follows from \eqref{eq:BV} and the prime number theorem that 
\begin{equation}\label{eq:upbou}
|Z_N(x)|\geq [\pi(N)-\pi(N/2)]-\frac{N}{\log^{8A}x}.
\end{equation}
Let $x_1=N^{1/2+1/100}$ and $x_n=2x_{n-1}$. Notice that if $q\in Z_N(x_n))$, then
$$
\max_{(a,q)=1}\sup_{x_n/2<y<x_n}\Big|\sum_{\substack{p\leq y\\p\equiv a \mod q}}\log p-\frac{y}{\varphi(q)}\Big|\leq \frac{Cx}{N\log^{2A}x}.
$$
In particular, if $q\in \in\bigcap_{n}Z_N(x_n)$, then by the definition of $x_n$, for every $x\geq N^{1/2+100}$,
\begin{equation}\label{eq:zz6}
\max_{(a,q)=1}\Big|\sum_{\substack{p\leq x\\p\equiv a \mod q}}\log p-\frac{x}{\varphi(q)}\Big|\leq \frac{2Cx}{N\log^{2A}x}.
\end{equation}
Notice that
$$ 
\bigcup_{n\in \N}\Big|[N/2,N]\cap \mathcal{P}\cap Z_N(x_n)^c\Big|\leq N\cdot \sum_{n\geq 1}\frac{1}{\log^{8A}x_n}\leq N\cdot \sum_{n\geq 1}\frac{1}{\log^{8A}x_1+n^{8A}}\leq \frac{20N}{\log^{4A}N}.
$$
Therefore and by the prime number theorem, 
\begin{equation}\label{eq:cl}
\Big|\bigcap_{n\in \N}Z_N(x_n)\Big|\geq \frac{N}{2\log N}-\frac{20N}{\log^{4}N}.
\end{equation}

Let $q,r$ be given as in the statement and let $N:=[e^{q^{A^{-3}}}]$. Define $K(q,r):=\{p\in [N/2,N]\;:\; p\equiv r\mod q\}$. By the Siegel-Walfisz theorem, 
$$
|K(q,r)|\geq \frac{N}{3r\log N}.
$$
Define $S(q,r):=[\bigcap_{n}Z_N(x_n)]\cap K(q,r)$. Then by \eqref{eq:cl} and the bound on $r$,
 $|S(q,r)|\geq \frac{N}{3r\log N}-\frac{20N}{\log^{4}N}\geq \frac{N}{6r \log N}$ and so $P0$ holds. Notice that $P1$ immediately follows from $S(q,r)\subset K(q,r)$, and  $P2$ immediately follows from $S(q,r)\subset \bigcap_{n}Z_N(x_n)$ and \eqref{eq:zz6}. 
 \end{proof}

\subsubsection{Statement of Theorem 1.3.}

For $\alpha\in \R\setminus \Q$, let $(q_n)$ denote the sequence of denominators of $\alpha$. With Lemma \ref{lem:nt6}, we have the following lemma:
\begin{lemma}\label{lem:nt2} For every $A>0$ there exists an uncountable set $C_A$ such that for every $\alpha\in C_A$ the sequence $(q_n)$ satisfies: there exists $n_0$ such that for every $n\geq n_0$,
\begin{enumerate}
\item[H1.]  $q_n$ is a prime number;
\item[H2.] $q_{n+1}\in\Big[\frac{1}{2}e^{q_n^{A^{-3}}},e^{q_n^{A^{-3}}}\Big]$;
\item[H3.] For every $n\geq n_0$ and every $x\in \N$ satisfying $x^{1/2+1/100}\geq q_n$, we have
$$
\max_{a<q_n}\Big|\sum_{\substack{p\leq x\\p\equiv a \mod q_n}}\log p-\frac{N}{q_n}\Big|\leq \frac{2Cx}{q_n \log^Ax},
$$
for some constant $C>0$.
\end{enumerate}
\end{lemma}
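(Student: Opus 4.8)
The plan is to build the set $C_A$ of irrationals $\alpha$ by prescribing their continued fraction denominators $(q_n)$ one at a time, using Lemma \ref{lem:nt6} as the engine to guarantee H1--H3 at each stage. Recall the classical fact that given any rapidly increasing sequence of positive integers $(q_n)$ satisfying $q_{n+1} \equiv \pm 1 \pmod{q_n}$ (or more simply $\gcd(q_n, q_{n+1}) = 1$ together with a growth condition), there is an irrational $\alpha$ whose convergent denominators are exactly $(q_n)$; in fact the partial quotients $a_{n+1} = \lfloor q_{n+1}/q_n\rfloor$ determine $\alpha$ uniquely, and any choice of $a_n \ge 1$ is admissible. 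So the problem reduces to constructing an admissible sequence $(q_n)$ whose tail lies in the "good" set for primes in residue classes.

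First I would fix a starting prime $r = q_{n_0}$ with $r \ge e^{A^{10}}$, which is forced by the hypotheses of Lemma \ref{lem:nt6}. Then I would iterate: given $q_n = q$ and $q_{n-1} = r$ both prime with $q \in [\tfrac12 e^{r^{A^{-3}}}, e^{r^{A^{-3}}}]$, apply Lemma \ref{lem:nt6} to obtain the nonempty set $S(q,r) \subset \mathcal{P} \cap [\tfrac12 e^{q^{A^{-3}}}, e^{q^{A^{-3}}}]$; nonemptiness follows from P0 since $|S(q,r)| \ge e^{q^{A^{-3}}}/(6 r q^{A^{-3}}) > 1$ once $q$ (hence $r$) is large enough. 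Pick any $q_{n+1} \in S(q,r)$. By construction $q_{n+1}$ is prime (H1), lies in $[\tfrac12 e^{q_n^{A^{-3}}}, e^{q_n^{A^{-3}}}]$ (H2, this is exactly the ambient interval in Lemma \ref{lem:nt6}), and satisfies P2, which is precisely the statement H3 with $\ell = q_{n+1}$ playing the role of the modulus — note that property P2 as stated controls sums to modulus $\ell \in S(q,r)$, i.e. to the \emph{next} denominator, so the index bookkeeping works out provided one reads H3 at level $n+1$. I would also check that consecutive denominators are coprime: both are distinct primes, so $\gcd(q_n, q_{n+1}) = 1$ automatically, and the partial quotient $a_{n+1} = \lfloor q_{n+1}/q_n \rfloor \ge 1$ is a legitimate choice, so $(q_n)$ is realized by an honest irrational $\alpha$.

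To get an \emph{uncountable} family $C_A$, at each step $n$ I would note that $|S(q,r)| \to \infty$, so there are at least two admissible choices for $q_{n+1}$ for all large $n$; branching at infinitely many stages produces a Cantor-type set of distinct sequences $(q_n)$, hence of distinct irrationals $\alpha$ (distinct denominator sequences give distinct continued fractions, hence distinct numbers). Each such $\alpha$ has a tail of denominators satisfying H1--H3 by the construction above, with the threshold $n_0$ depending only on where we started (where $q_{n_0} \ge e^{A^{10}}$ and $|S(q_n, q_{n-1})| \ge 2$ both kick in). The main obstacle — really the only nontrivial point — is the compatibility of the index shift between the conclusion of Lemma \ref{lem:nt6} (which gives equidistribution to moduli $\ell \in S(q,r) \subset [\tfrac12 e^{q^{A^{-3}}}, e^{q^{A^{-3}}}]$, the interval of the \emph{new} denominator) and the statement H3 (which asks for equidistribution to modulus $q_n$); this is resolved simply by recording, at step $n$, the property that becomes H3 at level $n+1$, and then relabeling. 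Everything else is a routine unwinding of the continued fraction formalism and a standard diagonal/branching argument for uncountability.
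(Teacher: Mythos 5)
Your overall strategy is the same as the paper's: inductively extend $(q_{n-1},q_n)$ by picking $q_{n+1}\in S(q_n,q_{n-1})$ from Lemma~\ref{lem:nt6}, read H1--H3 off P0--P2, and obtain uncountability from $|S(q_n,q_{n-1})|\geq 2$ at infinitely many stages. However, you misstate the constraint that makes a sequence $(q_n)$ realizable as a denominator sequence of a continued fraction, and this is not a cosmetic slip but the crux of why Lemma~\ref{lem:nt6} is formulated the way it is.

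The admissibility condition is \emph{not} ``$q_{n+1}\equiv \pm 1\pmod{q_n}$'' and \emph{not} coprimality plus growth. The recursion $q_{n+1}=a_{n+1}q_n+q_{n-1}$ with $a_{n+1}\geq 1$ and $0\leq q_{n-1}<q_n$ forces exactly $q_{n+1}\equiv q_{n-1}\pmod{q_n}$ (and $q_{n+1}>q_n$). Given $q_{n-1}$ and $q_n$, a candidate $q_{n+1}$ coprime to $q_n$ but lying in a different residue class $\pmod{q_n}$ is simply not attainable as the next denominator: you cannot ``choose'' $a_{n+1}=\lfloor q_{n+1}/q_n\rfloor$ unless the remainder happens to equal $q_{n-1}$. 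This is precisely the role of property P1 in Lemma~\ref{lem:nt6}, which asserts $\ell\equiv r\pmod q$ for all $\ell\in S(q,r)$; P1 is what guarantees every element of $S(q_n,q_{n-1})$ is an admissible continuation, and it is the reason the set $S(q,r)$ is carved out of a residue class in the first place. Your construction is in fact valid because you do choose $q_{n+1}\in S(q_n,q_{n-1})$, but the justification you give (``both are distinct primes, so $\gcd(q_n,q_{n+1})=1$ automatically'') is wrong as a reason and would not rescue the argument if $S$ lacked P1. You should invoke P1 explicitly at this step, exactly as the paper's proof does when it records that ``the only restriction that we have in defining $q_{n+1}$ is $q_{n+1}\equiv q_{n-1}\pmod{q_n}$.''
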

\begin{proof} Recall that $q_{n+1}=a_nq_n+q_{n-1}$. Inductively, having defined $q_n$ and $q_{n-1}$, the only restriction that we have in defining $q_{n+1}$ is $q_{n+1}\equiv q_{n-1} \mod q_n$ (see $P1.$). We  pick $a_n$ so that the corresponding $q_{n+1}$ satisfies $q_{n+1}\in S(q_n,q_{n-1})$. Then $H1.$ and $H2.$ follow from $P0.$ and the definition of $S(q_n,q_{n-1})$, and $H3.$ follows immediately from $P2.$. Moreover, by the bound on the cardinality of $S(q_n,q_{n-1})$, the set $C_A$ is uncountable.
\end{proof}

Let $e(x)=e^{2\pi i x}$. For $w\in C^\infty(\T^2)$, let $w=\sum_{(k,\ell)\in \Z^2}a_{k,\ell}e(kx+\ell y)$. For $\alpha\in C_A$, let 
\begin{multline*}
\mathscr{C}_A^\alpha:=\\
\Big\{v\in C^\infty: v(x,y)=1+Re\Big(\sum_{n}\sum_ma_{q_n,m}e(q_nx+my)\Big),\text{ where } |a_{q_n,0}|\in[q_{n+1}^{-2/3},q_{n+1}^{-1/2}]\Big\}
\end{multline*}
and 
$$
\mathscr{C}_A=\bigcup_{\alpha\in C_A}\mathscr{C}_A^\alpha.
$$
Let $v\in \mathscr{C}_A$ and let $(T^v_t)$ be the reparametrization of $(L^\alpha_t)$ given by $v$. Let $Cob_v\subset C^1(\T^2)$ denote 
the set of smooth quasi-coboundaries for the automorphism $T^v_1$.

We can now restate Theorem \ref{th:maa} as follows:
\begin{theorem}\label{th:maa2}Let $v\in \mathscr{C}_A$. Then $(T^v_t,\T^2,\mu)$ is weakly mixing and for any $\psi\in Cob_v$ and every $(x,y)\in \T^2$, 
$$
\Big|\sum_{p\leq N}\psi(T^v_p(x,y))\log p- N\int_{\T^2}\psi\, d\mu\Big|\ll_{A,\psi}N\log^{-A}N.
$$
\end{theorem}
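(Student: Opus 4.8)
The plan is to exploit the special-flow (periodic-in-the-first-coordinate) structure of the reparametrization together with the arithmetic input of Lemma \ref{lem:nt2}. First I would record the reduction: since $v \in \mathscr{C}_A^\alpha$ for some $\alpha \in C_A$ and $\psi \in \mathrm{Cob}_v$ is a smooth quasi-coboundary for $T^v_1$, write $\psi(z) - \int \psi\,d\mu = h(z) - h(T^v_1 z)$ for a smooth $h$. Then for any point $z=(x,y)$ the sum telescopes along each orbit segment: $\sum_{p\le N}\psi(T^v_p z)\log p - N\int\psi\,d\mu$ becomes (up to the standard PNT normalisation $\sum_{p\le N}\log p = N + O(N/\log^{A+1}N)$) a sum of the form $\sum_{p\le N}\big(h(T^v_p z) - h(T^v_{p+1} z)\big)\log p$, which after summation by parts over the primes reduces to estimating $\sum_{p \le N} (\log p)\, h(T^v_p z)$ — i.e. a weighted Birkhoff-type sum of the single smooth observable $h$ over the prime times of the flow. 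I would therefore reduce Theorem \ref{th:maa2} to showing cancellation of $\big|\sum_{p\le N}(\log p) h(T^v_p z) - N\int h\,d\mu\big| \ll_{A,h} N\log^{-A}N$ for every smooth $h$ and every $z$; weak mixing of $(T^v_t)$ is the Sklover/Fayad-type statement and I would cite \cite{Shk,FAY} (the Fourier support condition $|a_{q_n,0}|\in[q_{n+1}^{-2/3},q_{n+1}^{-1/2}]$ with $q_{n+1}$ super-exponentially large in $q_n$ is exactly the Liouvillean resonance condition forcing weak mixing and ruling out smooth conjugacy to $L^\alpha$).

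Next I would analyse the prime orbit $\{T^v_p z\}$ geometrically. Because $T^{\alpha,v}_t$ has the same orbits as the linear flow $L^\alpha$, we have $T^v_p z = L^\alpha_{u(p,z)}z = z + (\alpha\, u(p,z), u(p,z))$, where $u(p,z)$ is the solution of $\int_0^{u}v(L^\alpha_s z)\,ds = p$. The point is that $u(p,z)$ is, up to a bounded smooth correction coming from $v$, equal to $p$; more precisely $u(p,z) = p + \Phi(z) - \Phi(L^\alpha_{u(p,z)} z) + (\text{lower order})$ where $\Phi$ is the transfer function solving the linearised cohomological equation for $v-1$ over $L^\alpha$ (this is where the Diophantine-type control of $v$ enters: the Fourier coefficients $a_{q_n,m}$ live at frequencies $(q_n,m)$ and one needs $|q_n\alpha - (\text{nearest integer in the relevant direction})|$ to be summably controlled by the continued-fraction structure of $\alpha$). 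Expanding $h$ in Fourier series $h = \sum_{(k,\ell)} \hat h(k,\ell) e(kx+\ell y)$ and evaluating at $T^v_p z$, the sum $\sum_{p\le N}(\log p)\, h(T^v_p z)$ decomposes into exponential sums $\sum_{p\le N} (\log p)\, e\big((k\alpha+\ell)\,u(p,z)\big)$ weighted by $\hat h(k,\ell)$ and by phases depending on $z$. Since $u(p,z)$ differs from $p$ by a bounded amount plus a function that is (for the relevant ranges) well-approximated by a smooth function of bounded variation, I would further reduce — via partial summation and the decay of $\hat h$ — to bounding linear exponential sums over primes $\sum_{p \le N}(\log p) e(\beta p)$ together with the arithmetic-progression sums controlled by H3, i.e. $\sum_{p\le N,\, p\equiv a(q_n)}\log p$, which are needed precisely when $\beta = k\alpha+\ell$ is a major-arc rational with denominator a convergent denominator $q_n$ of $\alpha$. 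Hypothesis H3 of Lemma \ref{lem:nt2} gives exactly the error $\ll x/(q_n\log^A x)$ for all $x$ with $x^{1/2+1/100}\ge q_n$; the super-exponential gaps H2 guarantee that for every scale $N$ there is a unique relevant $q_n$ and that $q_n$ is small enough relative to $N$ for H3 to apply, while H1 (primality of $q_n$) is what let Lemma \ref{lem:nt6}/\ref{lem:nt2} be proven in the first place.

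The main obstacle I expect is the passage from the "ideal" linear flow computation to the actual reparametrised times $u(p,z)$: one must show that the discrepancy $u(p,z)-p$ is controlled uniformly in $z$ and, crucially, varies slowly enough in $p$ that partial summation against $(\log p)e(\beta p)$ or against the progression sums from H3 costs only a factor that is absorbed into $\log^{-A}N$. This requires quantitative bounds on the transfer function $\Phi$ and its derivative along orbits — which come from the Fourier support of $v$ (only harmonics $(q_n,m)$ appear) and the continued fraction expansion of $\alpha$, with the lower bound $|a_{q_n,0}|\ge q_{n+1}^{-2/3}$ ensuring the time change is genuinely non-trivial (weak mixing) while the upper bound $|a_{q_n,0}|\le q_{n+1}^{-1/2}$ keeps $v$ smooth and $\Phi$ summably bounded. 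Once this slow-variation estimate is in hand, the theorem follows by splitting the Fourier series of $h$ into the finitely many "resonant" frequencies near convergents of $\alpha$ (handled by H3 plus partial summation) and the "non-resonant" tail (handled by a quantitative Vinogradov-type bound $\sum_{p\le N}(\log p)e(\beta p) \ll N\log^{-A}N$ for $\beta$ on minor arcs, together with the rapid decay of $\hat h$ since $h$ is $C^\infty$), and then summing over the orbit segment the telescoped contributions. I would close by noting that the implied constant depends only on $A$, on $\|h\|_{C^k}$ for a suitable $k=k(A)$, and on $\alpha$ through $n_0$, hence only on $A$ and $\psi$ as claimed.
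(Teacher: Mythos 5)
Your proposal takes a genuinely different route from the paper, but it contains a gap that is fatal in the present setting. Your plan is to linearize the time change: write $u(p,z)=p+\Phi(z)-\Phi(L^\alpha_{u(p,z)}z)+\text{(lower order)}$, treat $u(p,z)-p$ as a bounded, slowly varying perturbation, Fourier-expand $h$, and reduce by partial summation to classical exponential sums $\sum_{p\le N}(\log p)e(\beta p)$ together with the arithmetic-progression sums of H3. The trouble is that no such transfer function $\Phi$ exists here, even measurably. The class $\mathscr{C}_A$ is built precisely so that $v-1$ is \emph{not} a coboundary over $L^\alpha$: the lower bound $|a_{q_n,0}|\ge q_{n+1}^{-2/3}$ together with $\|q_n\alpha\|\asymp q_{n+1}^{-1}$ forces the would-be Fourier coefficient $\widehat\Phi(q_n)\asymp a_{q_n,0}/\|q_n\alpha\|\gtrsim q_{n+1}^{1/3}$, which is not even $\ell^2$. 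This is exactly Katok's weak-mixing criterion (the one the paper verifies at the start of Section 4), so the non-existence of $\Phi$ is not a technical nuisance but the mechanism producing weak mixing. Consequently $u(p,z)-p$ is unbounded and oscillates on the scale of the denominators $q_{n+1}$, and the ``slow variation'' estimate you flag as the main obstacle is not just hard but false at the scales you need; the partial-summation reduction to $\sum_p(\log p)e(\beta p)$ therefore breaks down.

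There is also a smaller circularity in the telescoping step: $h(T_pz)-h(T_{p+1}z)=(h-h\circ T_1)(T_pz)=(\psi-\int\psi\,d\mu)(T_pz)$, so ``reducing'' the problem to weighted Birkhoff sums of $h$ in this way does not leave the original problem. The coboundary structure of $\psi$ is indeed the right ingredient to exploit, but the paper uses it differently: it only needs the crude consequence $\sum_{a<q_n}\psi(T_a(x,y))=O_\psi(1)$.

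For comparison, the paper's actual argument avoids Fourier expansion of $h$ and minor-arc exponential sums entirely. The key dynamical input is a quantitative rigidity estimate (Lemma~\ref{lem:rig} and Corollary~\ref{cor:rig}): for $0\le k\le q_{n+1}^{1/2-1/100}/q_n$ one has $d\bigl(T^v_{kq_n+a}(x,y),T^v_a(x,y)\bigr)\ll_A q_n^{-2}$, i.e.\ the flow is near-periodic of period $q_n$ over the rigidity time window. This means $\psi(T_p(x,y))$ is, up to $O(q_n^{-2})$, a function of $p\bmod q_n$ only. One then splits the prime sum by residue class mod $q_n$. For $N$ in the rigidity window $[q_n^{3-1/10},q_{n+1}^{1/2-1/100}]$ this is done directly and H3 of Lemma~\ref{lem:nt2} controls $\sum_{p\le N,\,p\equiv a (q_n)}\log p$; for larger $N$ up to $q_{n+1}^{3-1/10}$ the interval $[0,N]$ is chopped into blocks of length $H=N^{1/6+\epsilon_0}\le q_{n+1}^{1/2-1/100}$ and Corollary~\ref{cor:ua} (a Huxley-type short-interval Bombieri--Vinogradov input) controls primes in residue classes in each block. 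In both cases the residue-class averaging produces a factor $q_n^{-1}\sum_{a<q_n}\psi(T_a(x,y))\ll_\psi q_n^{-1}$ from the coboundary structure of $\psi$, and since $q_n\gg\log^{A}N$ (the gap condition H2) this gives the $N\log^{-A}N$ saving. So the paper's saving comes from the near-periodicity of the \emph{flow} at the dynamical scale $q_n$, not from cancellation in linear exponential sums over primes. If you want to salvage the Fourier/exponential-sum approach you would have to abandon the global linearization of $u(p,z)$ and restrict to the same rigidity windows, at which point you are essentially rederiving the paper's argument.
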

We will prove the above theorem in Section \ref{sec:sex}.

\begin{remark} It follows from unique ergodicity that the space $Cob_v$ is dense in $C^1(\T^2)$ in the uniform topology. Indeed, it is enough to notice that for every mean zero $g\in C^1(T^2)$
$$
 \frac1n\sum_{n\leq N}(S_n(g\circ T^v_1)-S_n(g))=-g+\frac1N\sum_{n\leq N}g\circ (T^v_1)^n.
$$
Now the LHS is a coboundary and the RHS is uniformly close to $-g$ by unique ergodicity.
\end{remark}

We have the following lemma (see \eqref{v1}). 
\begin{lemma}\label{lem:v1} Let $v\in \mathscr{C}_A$. Let $f(x):=v(1,(x,0))$. Then $f\in C^\infty(\T)$ and $f(x)=1+Re(\sum_{n}b_{q_n}e(q_nx))$, where 
$$
|b_{q_n}|=|a_{q_n,0}|\in [q_{n+1}^{-2/3},q_{n+1}^{-1/2}].
$$
\end{lemma}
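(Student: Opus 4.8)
The plan is a direct Fourier computation. First I would unwind the cocycle \eqref{v1}: $f(x)=v(1,(x,0))=\int_0^1 v\big(L^\alpha_s(x,0)\big)\,ds$, and as $L^\alpha_s(x,0)=(x+s\alpha,\,s)$ (with the first coordinate read modulo $1$), this is
\[
f(x)=\int_0^1 v(x+s\alpha,\,s)\,ds .
\]
That $f\in C^\infty(\T)$ is then immediate: since $v\in C^\infty(\T^2)$, the integrand $(x,s)\mapsto v(x+s\alpha,s)$ is $C^\infty$ and $1$-periodic in $x$ uniformly for $s\in[0,1]$, so one may differentiate under the integral sign as often as one likes.

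Next I would substitute the Fourier expansion $v(x,y)=1+\mathrm{Re}\big(\sum_n\sum_m a_{q_n,m}\,e(q_nx+my)\big)$. Smoothness of $v$ gives absolute, uniform convergence of this series and of its derivatives, so, writing $e\big(q_n(x+s\alpha)+ms\big)=e(q_nx)\,e\big((q_n\alpha+m)s\big)$ and integrating term by term,
\[
f(x)=1+\mathrm{Re}\Big(\sum_n e(q_nx)\sum_m a_{q_n,m}\int_0^1 e\big((q_n\alpha+m)s\big)\,ds\Big).
\]
Only the exponentials $e(\pm q_nx)$ occur in $x$, so $f$ has exactly the asserted form $f(x)=1+\mathrm{Re}\big(\sum_n b_{q_n}e(q_nx)\big)$ with
\[
b_{q_n}=\sum_m a_{q_n,m}\int_0^1 e\big((q_n\alpha+m)s\big)\,ds .
\]

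The only step with content is evaluating $b_{q_n}$, and even it is short. For $\beta\notin\Z$, $\int_0^1 e(\beta s)\,ds=\frac{e(\beta)-1}{2\pi i\beta}$ has modulus $\frac{|\sin\pi\beta|}{\pi|\beta|}$. Let $p_n/q_n$ be the $n$-th convergent of $\alpha$; since $q_n$ is a convergent denominator, the unique $m\in\Z$ for which $|q_n\alpha+m|$ is small is $m=-p_n$, with $|q_n\alpha-p_n|=\|q_n\alpha\|\le q_{n+1}^{-1}$, whereas every other $m$ satisfies $|q_n\alpha+m|\ge\tfrac12$; moreover, for every $m$, $q_n\alpha+m$ lies within $\|q_n\alpha\|$ of an integer, so $|\sin\pi(q_n\alpha+m)|\le\pi\|q_n\alpha\|$. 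Hence $\big|\int_0^1 e((q_n\alpha+m)s)\,ds\big|\le 2\|q_n\alpha\|\le 2q_{n+1}^{-1}$ for all $m\ne-p_n$, while the $m=-p_n$ integral has modulus $1+O(q_{n+1}^{-2})$.

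Isolating the resonant term, $b_{q_n}=a_{q_n,0}\int_0^1 e\big((q_n\alpha-p_n)s\big)\,ds+R_n$ with $|R_n|\le 2q_{n+1}^{-1}\sum_m|a_{q_n,m}|$, where $a_{q_n,0}$ is the coefficient whose modulus the definition of $\mathscr C_A^\alpha$ pins to $[q_{n+1}^{-2/3},q_{n+1}^{-1/2}]$. Since $\sum_{n,m}|a_{q_n,m}|<\infty$, the partial sums $\sum_m|a_{q_n,m}|$ are bounded (in fact $\to 0$), so $|R_n|=O(q_{n+1}^{-1})=o(q_{n+1}^{-2/3})=o(|a_{q_n,0}|)$; together with the modulus estimate for the main integral this gives $|b_{q_n}|=|a_{q_n,0}|(1+o(1))$, hence $|b_{q_n}|\in[q_{n+1}^{-2/3},q_{n+1}^{-1/2}]$ for $n\ge n_0$. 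So the whole statement is bookkeeping; the one thing worth isolating is that averaging $v$ over a single period of the linear flow annihilates every Fourier mode except the one resonant with the flow direction — which is precisely where $q_n$ being a denominator of $\alpha$ is used.
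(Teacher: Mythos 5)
Your Fourier computation is more careful than the paper's own — and that is exactly where the trouble starts. Since $L^\alpha_s(x,0)=(x+s\alpha,s)$, you correctly obtain
\[
b_{q_n}=\sum_m a_{q_n,m}\int_0^1 e\big((q_n\alpha+m)s\big)\,ds,
\]
and you correctly identify the resonant index as $m=-p_n$, where $p_n/q_n$ is the $n$-th convergent. But in the very next line you write ``$b_{q_n}=a_{q_n,0}\int_0^1 e((q_n\alpha-p_n)s)\,ds+R_n$'': the coefficient attached to the resonant mode is $a_{q_n,-p_n}$, not $a_{q_n,0}$, and for all large $n$ one has $p_n\neq 0$, so this is a genuine substitution error rather than a relabeling. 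What your computation actually gives is $|b_{q_n}|=|a_{q_n,-p_n}|\,(1+o(1))+O(q_{n+1}^{-1})$, whereas the constraint built into $\mathscr C_A^\alpha$ is placed on $a_{q_n,0}$, a different Fourier coefficient of $v$; so your argument, as written, does not reach the stated conclusion. (A small secondary point: even after fixing the index, the bound $|b_{q_n}|=|a_{q_n,\cdot}|(1+o(1))$ gives membership in $[q_{n+1}^{-2/3},q_{n+1}^{-1/2}]$ only up to $(1+o(1))$ factors, which the lemma's clean equality $|b_{q_n}|=|a_{q_n,0}|$ elides.)

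For comparison, the paper reaches the exact identity $b_{q_n}=a_{q_n,0}$ by a shortcut you did not take: it evaluates $\int_0^1 v(L^\alpha_s(x,0))\,ds$ as $\int_0^1 v(x,s)\,ds$, i.e.\ it drops the $s\alpha$ drift in the first coordinate, after which $\int_0^1 e(q_nx+ms)\,ds$ equals $e(q_nx)$ for $m=0$ and $0$ otherwise, and the lemma follows instantly. Under the literal definition $L^\alpha_t(x)=x+(t\alpha,t)$ of Section~2.2 that step is not an identity, and your honest version of the computation shows that the conclusion would instead require $\mathscr C_A^\alpha$ to constrain $a_{q_n,-p_n}$. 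So the defect in your write-up — putting $a_{q_n,0}$ where $a_{q_n,-p_n}$ should appear — is not a cosmetic slip to be repaired in place: it is precisely the point at which your derivation and the lemma's statement diverge, and you should resolve it explicitly (e.g.\ by re-indexing which coefficient the class $\mathscr C_A^\alpha$ is meant to pin down) rather than by writing the name of the coefficient you need the bound on.
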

\begin{proof} Note that $f(x)=\int_0^1v(L_s^\alpha(x,0))ds=\int_0^1v(x,s)ds$. We have 
$$ 
\int_0^1v(x,s)ds=1+\sum_{n}\sum_ma_{q_n,m}\int_{0}^1Re\Big(e(q_nx+ms)\Big)ds.
$$
It remains to notice that for $m\neq 0$, $\int_{0}^1Re\Big(e(q_nx+ms)\Big)ds=0$ and $\int_{0}^1Re\Big(e(q_nx)\Big)ds=Re\Big(e(q_nx)\Big)$.
This finishes the proof.
\end{proof}

\subsection{Special representation of Kochergin flows}
We will study Kochergin flows $T^{\alpha,\gamma}$ via their {\em special representation}. As the orbits are qualitatively orbits of the  linear flow in direction $(\alpha,1)$, it is natural to take $\T\times\{0\}$ as a section for the flow. Assume WLOG that the singularity lies on the orbit of $(0,0)$. Notice that the {\em first return map}  is just an irrational rotation by $\alpha$ (except the segment joining  $(0,0)$ with $x_0$). Moreover, the first return map $f$ is smooth except $0\in \T$ at which it blows up to infinity -- the closer the orbit passes to the singularity, the longer it takes to come back.  In particular, as shown by Kochergin \cite{Koc1}, the roof function $f$ satisfies: for $\gamma\in (-1,0)$ and $i=0,1,2$
\begin{equation}\label{eq:asu}
\lim_{x\to 0^{+}}\frac{\partial^i f(x)}{x^{-i+\gamma}}=A_i\;\;\text{ and } 
\;\;\lim_{x\to 0^{-}}\frac{\partial^i f(x)}{(1-x)^{-i+\gamma}}=B_i, 
\end{equation}
where $\partial^0f =f$ and $A_0,A_2,B_0,B_1,B_2>0$ and $A_1<0$. We say that $f\in C^{2}(\T\setminus\{0\})$ satisfying \eqref{eq:asu} has {\em power singularity with exponent $\gamma$}. We will denote the special flow corresponding to the Kochergin flow by $T^{\alpha,\gamma}$ or $T^\alpha$ since the roof function $f$ and hence also $\gamma$ is fixed. 
\begin{remark}\label{rem:ret} From the above representation it follows that every point on $\T^2$, except points on the segment joining $x_0$ with $(\alpha,1)$, has a unique representative in the special representation. Moreover, fix a point $q\neq x_0$ on the segment from $x_0$ to $(\alpha,1)$. Then every point $T^{\alpha}_t q$ with $t$ larger than the return time of $q$ to the transversal has a unique representative in the special representation.
\end{remark}

Let $\psi\in C(\T^2)$ and let $\psi_\infty:=\psi(x_0)$ be the value of $\psi$ at the fixed point. Notice that $\psi$ corresponds to the following function $\bar{\psi}$ in the special representation: 
\begin{itemize}
\item[i.] $\bar{\psi}\in C(\T^f)$;
\item[ii.] $\bar{\psi}(y,f(y)):=\lim_{s\to f(y)^-}\bar{\psi}(y,s)$ is equal to $\bar{\psi}(y+\alpha,0)$.
\item[iii.] for every $\epsilon>0$ there exists $\delta>0$ such that for every $(y,r)\in \{(x,s)\in \T^f\:\; s\geq \frac{1}{\delta}\}$, $|\bar{\psi}(y,r)-\psi_\infty|<\epsilon$.
\end{itemize}

The second property follows from the fact that the point $(y+\alpha,0)$ correponds to $(y+\alpha,0)$ in the special representation and $(y+\alpha,1)$ (indetified with $(y+\alpha,0)$ on $\T^2$) corresponds to $(y,f(y))$ in the special representation. The third property follows from the fact that the set $(y,r)\in \{(x,s)\in \T^f\:\;\|x\|<\delta, s\geq \frac{1}{\delta}\}$ corresponds to a neighborhood of the fixed point $x_0\in \T^2$ and so values of $\psi$ converge to $\psi_\infty$ when moving towards the fixed point. 

Using the special representation of the Kochergin flow (see also Remark \ref{rem:ret}), Theorem \ref{th:main} is a consequence of the following theorem: 
\begin{theorem}\label{thm:main2} There exists a $G_\delta$ dense set $\cD$ such that for every $\alpha\in \cD$ there exists $c>0$ and a subsequence of denominators $(q_{n_k})$ of $\alpha$ such that for every $\bar{\psi}$ satisfying i.-iii., we have: for every $(x,s)\in \T^f$, every $m\geq 0$ and every $N_k\in [\frac{q_{n_k+1}}{\log k},cq_{n_k+1}]$, 
$$
\max_{z\in\{+,-\}}\Big|\sum_{p<N_k}\bar{\psi}(T^{\alpha}_{z\cdot (p-m)}(x,s))\log p-\int_{0}^{N_k}\bar{\psi}(T^{\alpha}_{z\cdot t}(x,s))dt\Big|=o(N_k).
$$
\end{theorem}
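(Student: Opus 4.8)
\textbf{Proof proposal for Theorem \ref{thm:main2}.}

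The plan is to transfer the dynamical sum to a short-interval exponential-sum estimate over primes via the special-flow structure, and then use the arithmetic input from \cite{KLR} (to be quoted as Proposition \ref{prop:cruc}) to get cancellation. First I would fix $\alpha$ with denominators $(q_n)$ and work on the scale $N_k \asymp q_{n_k+1}$, so that an orbit of length $N_k$ in the special flow $T^\alpha$ corresponds, up to the base rotation, to roughly $N_k/\int f$ iterates of the rotation by $\alpha$; on this scale the Ostrowski/continued-fraction geometry means the relevant Birkhoff sums $S_n(f)$ behave almost linearly away from the orbit of the singularity. The idea is to approximate $\bar\psi$ by functions that are, in the base variable, trigonometric polynomials of controlled degree (using property iii.\ to handle the neighborhood of the fixed point separately, where $\bar\psi \approx \psi_\infty$ and the contribution is essentially a counting statement for how long prime-time orbits spend near $x_0$). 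For each such Fourier mode $e(k\cdot)$, tracking the orbit $T^\alpha_{z(p-m)}(x,s)$ back to the transversal produces a phase of the shape $e\big(k\,\mathcal{N}(x,s,z(p-m))\alpha\big)$ where $\mathcal{N}$ is the (integer) number of returns; the key point, exploited already for mixing of Kochergin flows and in the quantitative estimates of \cite{fay}, is that on the good scale this return count is, to leading order, a \emph{quadratic} polynomial in the time variable $p$ (the roof function grows like $x^\gamma$ near $0$, and inverting the relation $t \approx S_n(f)$ on a Diophantine-good scale yields $n \approx c_1 t + c_2 t^2 + \dots$ with the quadratic term genuinely present).

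Next I would reduce, therefore, to bounding $\sum_{p<N_k} e\big(\beta_2 p^2 + \beta_1 p + \beta_0\big)\log p$ for the coefficients $\beta_i$ arising from the geometry, uniformly in the shift $m$ (which only affects $\beta_1,\beta_0$ after completing the square, not $\beta_2$) and in the base point $(x,s)$ away from the singular fibre. This is exactly where the short-interval result quoted from \cite{KLR} enters: it gives cancellation in $\sum_{N<p<N+H} e(\beta_2 p^2 + \beta_1 p)\log p$ for $H \ge N^{2/3-\eta}$, and the construction of $\cD$ in Proposition \ref{prop:cruc} is precisely designed to ensure that for $\alpha\in\cD$ the frequencies $\beta_2$ that show up (which are controlled by $q_{n_k}$ and $q_{n_k+1}$) fall in the range where this estimate applies on the window $[0,N_k]$ with $N_k\in[q_{n_k+1}/\log k,\,cq_{n_k+1}]$. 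One then sums over the finitely many Fourier modes (the error from truncating the Fourier series of $\bar\psi$ being $o(1)$ in sup-norm by smoothness of $f$ combined with uniform continuity of $\bar\psi$ and property iii.), multiplies by $\log p$ and uses partial summation, and compares with the integral $\int_0^{N_k}\bar\psi(T^\alpha_{zt}(x,s))\,dt$, which is handled the same way with $\log p$ replaced by the constant density $1$ — i.e. one shows $\sum_{p<N_k}(\text{phase})\log p$ and $\int_0^{N_k}(\text{phase})\,dt$ are both $o(N_k)$ for nonzero modes, while the zero mode cancels exactly.

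The main obstacle, I expect, is controlling the error in the quadratic approximation of the return-count function $\mathcal{N}(x,s,t)$ uniformly over \emph{all} base points $(x,s)\in\T^f$ and all shifts $m\ge 0$: orbits that pass very close to the singularity have return times with a large, rapidly varying contribution, so one must show that the set of $p<N_k$ for which $T^\alpha_{z(p-m)}(x,s)$ comes within distance $\delta$ of $x_0$ has density $O(\delta^{\text{something}})$, and on the complement that the cumulative roof sum is linear-plus-quadratic with an error term $o(1/N_k)$ per step after dividing by $k$. This is the Diophantine heart of the argument and is the reason $\cD$ must be a delicate $G_\delta$ set rather than a full-measure set — the denominators must be spaced so that on the scale $q_{n_k+1}$ the singular orbit is sufficiently far from most rotation-orbit points, and so that the leading quadratic frequency lands in the admissible range of the \cite{KLR} estimate. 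A secondary technical point is the uniformity in $m$ and the two-sided ($z\in\{+,-\}$) statement, but after completing the square the shift is absorbed into lower-order coefficients and the backward flow is treated symmetrically by reversing the roof-sum indexing, so these do not change the structure of the argument.
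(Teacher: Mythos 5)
Your proposal correctly identifies the main ingredients --- the special--flow reduction, the $\cite{KLR}$ short--interval estimate for quadratic phases over primes, the delicate choice of $\cD$, and the need to treat the neighbourhood of the singularity separately --- but it diverges from the paper's proof at two structural points, and one of them is a genuine gap.

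First, the paper never Fourier--decomposes $\bar\psi$. A general $\bar\psi$ satisfying i.--iii.\ lives on $\T^f$ with the roof identification and an unbounded fibre, so ``trigonometric polynomials in the base variable of controlled degree'' is not a workable expansion of $\bar\psi$, and it is not what the quadratic exponential--sum estimate is fed to. Instead, Proposition \ref{prop:cruc} is a \emph{decorrelation} statement for indicator functions of boxes $I_i\times J_h\subset\T\times\T$: $\sum_p\chi_{I\times J}(\gamma_1 p,\gamma_2 p^2)\log p\approx \lambda(J)\sum_p\chi_I(\gamma_1 p)\log p$. The two coordinates $\gamma_1(p-a_u)$ and $\gamma_2(p-a_u)^2$ parametrize which Rokhlin--tower column and which residual flow--time the orbit point $T_{p-m}(x,s)$ lands at; one then evaluates $\psi$ at $T_{\tilde c_i+\tilde d_h}(x_u,0)$ and uses unique ergodicity along the roof (Lemma \ref{lem:navi}) to show the average over the ``$J$'' direction vanishes. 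This sidesteps any Fourier analysis of $\psi$ entirely.

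Second, and this is where your sketch would actually break: the quadratic approximation does \emph{not} hold on the full range $[0,N_k]$. Your phrase ``inverting $t\approx S_n(f)$ yields $n\approx c_1t+c_2t^2$'' is misleading --- globally $\mathcal{N}(x,s,t)\sim t$, and the quadratic drift only appears locally, in the identity $S_{kq_{n}}(f)(x)\approx k\,S_{q_{n}}(f)(x)+k^2 S_{q_n}(f')(x)(q_n\alpha)$ of Lemma \ref{lem:appr}, which is valid only for $k\lesssim q_{n+1}^{2/3-\theta}/q_n$. Consequently the interval $[0,N_k]$ must be chopped into $\sim N_k^{1/3+\theta}$ windows $W_u$ of length $\sim q_{n_k+1}^{2/3-\theta}$, on each of which the base point, and hence the coefficients $(\gamma_1,\gamma_2)$, is \emph{different}; a single global quadratic phase does not exist. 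Relatedly, you need a lower bound on $|S_{q_n}(f')|$ at the window base points so that $\gamma_2$ is not too small for the hypothesis \eqref{eq:q2} of Proposition \ref{prop:cruc}; the paper isolates and discards the ``bad'' windows near the zero set of $S_{q_n}(f')$ using Lemma \ref{lem:smallder}, which your proposal does not anticipate. Finally, the comparison with $\int_0^{N_k}\bar\psi\,dt$ is not done mode--by--mode: away from the singularity the integral is controlled by the Denjoy--Koksma/unique ergodicity argument of Proposition \ref{prop:fos}, and near the singularity both the prime sum and the integral reduce to a Chebyshev--type count on a single interval $A_0$ times $\psi_\infty$, so they match for a reason unrelated to cancellation.
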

We will now show how to deduce Theorem \ref{th:main} from the above theorem.
\begin{proof}[Theorem \ref{thm:main2} implies Theorem \ref{th:main}]
Notice first that the set $\bigcup_{k}[\frac{q_{n_k+1}}{\log k},\frac{q_{n_k+1}}{10}]\subset \N$ has full upper density.
For every point $q\in \T^2$ for which there exists a unique representative $(x_q,s_q)\in \T^f$, Theorem \ref{th:main} follows immediately from Theorem \ref{thm:main2} with $m=0$. So it remains to consider points $q\in \T^2\setminus\{x_0\}$ which do not have a represenative. These are precisely points lying on the seqment joining $x_0$ and $(\alpha,1)$. Fix any such $q$. Let $m=m_q>0$ be the first return time to the section. Then 
$$
\sum_{p\leq N_k}\psi(T_p^\alpha(q))=\sum_{p\leq N_k}\psi(T_{p-m}^\alpha(T^\alpha_mq)),
$$
and 
$$
\int_{0}^{N_k}\psi(T^{\alpha}_{t}(q)dt=\int_{-m}^{-m-N_k}\psi(T^{\alpha}_{t}(T^\alpha_m q))dt=\int_{0}^{N_k}\psi(T^{\alpha}_{t}(T^\alpha_m q))dt+{\rm O}(m).
$$
Then  $T^\alpha_mq\in \T^2$ has a unique representative $(x_{m,q},s_{m,q})\in \T^f$ in the special representation so we can use Theorem \ref{thm:main2} form $(x_{m,q},s_{m,q})$ to deduce Theorem \ref{th:main} for $q$.
\end{proof}

We will also prove the following proposition, which together with Theorem \ref{thm:main2} implies that prime orbits are equidistributed along the subsequence $(N_k)$.
\begin{proposition}\label{prop:forback} Let $\alpha\in \cD$. There exists $c>0$ such that for every $(x,s)\in \T^f$, $T\in [\frac{q_{n_k+1}}{\log k},cq_{n_k+1}]$ (where $(q_{n_k})$ is the subsequence from Theorem \ref{thm:main2}),
\be\label{eq:forback}
 \min_{z\in\{+,-\}}\Big|\int_{0}^{T}\bar{\psi}(T^{\alpha}_{z\cdot t}(x,s))dt-T\int_{\T^f} \bar{\psi}\;d\,{\rm Leb}^f\Big|=o(T).
\ee
\end{proposition}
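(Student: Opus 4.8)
The plan is to reduce the statement about the two time directions to a purely one-directional equidistribution statement for a long orbit segment of a Birkhoff-type sum, and then exploit the fact that the denominator scale $q_{n_k+1}$ is precisely the scale at which one sees an almost-rigidity phenomenon in the rotation $x\mapsto x+\alpha$. Recall that a point $(x,s)\in\T^f$ flows forward under $T^\alpha_t$ by climbing the tower; after time $T$ the base coordinate has moved to $x+r_T(x,s)\alpha$ where $r_T(x,s)$ is the number of complete returns, and $r_T(x,s)\sim T$ uniformly in $(x,s)$ away from the singularity (more precisely $S_{r_T}(f)(x)\le T<S_{r_T+1}(f)(x)$, and since $f\in L^1$ with $\int f=:\bar f$, one has $r_T(x,s)=T/\bar f+o(T)$ uniformly in $(x,s)$, by the uniform control of Birkhoff sums of $f$ outside a small neighbourhood of $0$ — the Kochergin flow is uniquely ergodic, so this is standard). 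Thus
$$
\int_0^T \bar\psi(T^\alpha_{t}(x,s))\,dt=\sum_{0\le j<r_T(x,s)} \Big(\int_0^{f(x+j\alpha)}\bar\psi(x+j\alpha,u)\,du\Big)+O(1),
$$
so up to an $O(1)$ error the forward integral is a Birkhoff sum $\sum_{j<r}\Phi(x+j\alpha)$ of the continuous function $\Phi(y):=\int_0^{f(y)}\bar\psi(y,u)\,du$ on $\T$, with $r=r_T(x,s)\in[c'q_{n_k+1}/\log k,\,c''q_{n_k+1}]$, and $\int_\T\Phi=\int_{\T^f}\bar\psi\,d{\rm Leb}^f$. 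The backward integral is, in the same way, $\sum_{0<j\le r'}\Phi(x-j\alpha)+O(1)$ with $r'$ comparable.

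Now the key point is the choice of $\cD$: the subsequence $(q_{n_k})$ comes with the property (this is exactly what Proposition \ref{prop:cruc} and the construction of $\cD$ give, and it is the ingredient already invoked for Theorem \ref{thm:main2}) that $\alpha$ is very well approximated by $p_{n_k}/q_{n_k+1}$ in the sense that $\|q_{n_k+1}\alpha\|$ is extremely small; more to the point, for $N_k\le cq_{n_k+1}$ the orbit $\{x+j\alpha:0\le j<N_k\}$ equidistributes on $\T$ with an error controlled by the three-distance structure at level $q_{n_k}$. Concretely, writing $N_k=aq_{n_k}+b$ with $0\le b<q_{n_k}$ and $a\le c q_{n_k+1}/q_{n_k}=ca_{n_k}$, the sum $\sum_{j<N_k}\Phi(x+j\alpha)$ splits into $a$ blocks of length $q_{n_k}$ plus a remainder block; each length-$q_{n_k}$ block is $\frac{1}{q_{n_k}}$-dense and hence, for $\Phi$ continuous (uniformly continuous), each such block contributes $q_{n_k}\int_\T\Phi+o(q_{n_k})$ with the $o(\cdot)$ uniform in the starting point; summing, $\sum_{j<N_k}\Phi(x+j\alpha)=N_k\int_\T\Phi+o(N_k)$ uniformly in $x$. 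The backward sum is identical with $\alpha$ replaced by $-\alpha$, which has the same denominators, so both directions individually equidistribute; therefore $\min_{z\in\{+,-\}}$ of the quantity in \eqref{eq:forback} is $o(T)$ — in fact the $\max$ is $o(T)$ too, which is more than claimed.

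Two technical points need care. First, $\Phi$ is only continuous (indeed $\Phi(y)=\int_0^{f(y)}\bar\psi(y,u)\,du$ is continuous on $\T$ because $\bar\psi$ satisfies i.--iii. and $f$ is continuous on $\T\setminus\{0\}$ while $f\to+\infty$ at $0$ with $\bar\psi\to\psi_\infty$ there, so $\Phi(y)\to\psi_\infty f(y)$ remains finite-valued and continuous — though $\Phi$ itself blows up like $f$ near $0$; one must note $\Phi\in L^1(\T)$ and is bounded on $\T\setminus(-\delta,\delta)$, and handle the $j$ with $x+j\alpha$ near $0$ separately, using that among $q_{n_k}$ consecutive iterates only boundedly many land in $(-\delta,\delta)$ when $\delta$ is fixed, plus the uniform integrability of $f$ — exactly the same estimate that gives $r_T=T/\bar f+o(T)$). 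Second, one must confirm $r_T(x,s)$ and the backward analogue lie in the stated window up to the $\log k$ and $c$ constants, which follows by choosing $c$ in Proposition \ref{prop:forback} slightly smaller than the $c$ in Theorem \ref{thm:main2} and using $r_T=T/\bar f+o(T)$. The main obstacle is precisely this uniformity of the Birkhoff sums of the (unbounded) roof function $f$ near the singularity — but this is a well-known feature of Kochergin flows (it is what makes them uniquely ergodic and is implicit in \cite{Koc1}), so I would cite it rather than reprove it. Everything else is the elementary three-distance/continuity argument above.
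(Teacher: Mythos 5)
Your reduction to Birkhoff sums of $\Phi(y)=\int_0^{f(y)}\bar\psi(y,u)\,du$ (the paper calls this $Z$) and the use of Denjoy--Koksma/bounded-variation estimates is the same skeleton as the paper's argument (Proposition \ref{prop:fos} and Lemma \ref{lem:ase}). However, there is a genuine gap, and it is precisely the point where you write that "$r_T(x,s)=T/\bar f+o(T)$ uniformly in $(x,s)$" and that "in fact the $\max$ is $o(T)$ too, which is more than claimed."

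This uniformity is \emph{false}, and the Proposition's $\min$ (as opposed to $\max$) is not a loss of generality but the substance of the statement. The roof $f$ has a power singularity $f(y)\sim y^\gamma$ as $y\to 0^+$ with $\gamma\in(-1,0)$, so $f$ is unbounded. Given the time window $T\sim q_{n_k+1}$, the base orbit visits roughly $q_{n_k+1}$ points $\{x+j\alpha\}$, and nothing prevents one of them from lying within, say, $e^{-q_{n_k+1}}$ of $0$. For such an $(x,s)$ the forward flow enters a single tower of height $f(x+j\alpha)\gg T$, the time $T$ runs out inside that tower, the integrand there is $\approx\psi_\infty$ (by property iii.\ of $\bar\psi$), and so $\frac1T\int_0^T\bar\psi(T^\alpha_t(x,s))\,dt\to\psi_\infty\neq\int\bar\psi$. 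Your remark about handling "the $j$ with $x+j\alpha$ near $0$" by observing that among $q_{n_k}$ consecutive iterates only boundedly many land in a fixed $(-\delta,\delta)$ is correct for a \emph{fixed} $\delta$, but it does not control the single catastrophic visit at scale $\ll q_{n_k+1}^{-1}$, whose contribution is not $o(T)$. The uniform-in-$(x,s)$ version of $r_T\approx T/\bar f$ is not a "well-known feature of Kochergin flows" one can cite; it simply does not hold, and this is exactly what obstructs a pointwise ergodic theorem for the flow.

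The key idea your proposal is missing is the paper's Lemma \ref{lem:sas}: for $|w|\le cq_{n_k+1}$ (with $c$ depending on $\inf f$), the set of $w$ with $T_w(x,s)$ in the $\frac1{4q_{n_k+1}}$-neighbourhood of the singular fibre is a \emph{single} interval, hence contained entirely in either the forward or the backward semi-orbit. Therefore at least one of the two directions avoids that neighbourhood for the whole time window, and \emph{for that direction only} one has the quantitative control on $N(x,s,\cdot)$ and on $|t-S_{N}(f)(x)|$ (Lemma \ref{lem:ad}, which crucially uses a lower bound on $x_{N,\min}$), after which your Birkhoff-sum/Denjoy--Koksma argument goes through (Proposition \ref{prop:fos}). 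Without isolating the good direction first, the second step of your argument is not justified. A minor additional point: the source of the strong gap $q_{n_k+1}\ge e^{q_{n_k}}$ for $\alpha\in\cD$ is condition \eqref{eq:cD}, not Proposition \ref{prop:cruc}; the latter plays no role in Proposition \ref{prop:forback}.
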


\section{Distribution of primes in short intervals}\label{sec:dis}
We first recall the following result:
\begin{lemma}[\cite{OPP}]\label{lem:shit}Let $\varepsilon>0$ and $A>0$ be given. Then for $(a,r)=1$, $r\leq (\log N)^A$ and $H>N^{7/12+\epsilon}$,
$$
\sum_{\substack{p\in [N,N+H]\\p\equiv a\; \mod\; r}}\log p=\frac{H}{\varphi(r)}+{\rm O}_{A,\epsilon}\Big(\frac{H}{\varphi(r)(\log N)^A}\Big)
$$
\end{lemma}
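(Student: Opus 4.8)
The plan is to derive this from the Bombieri–Vinogradov-type result for short intervals due to, in essence, Huxley's zero-density estimate combined with the work on primes in arithmetic progressions in short intervals (the cited \cite{OPP} presumably being the source; one can also quote Perelli–Pintz–Salerno or the more classical form). Since this lemma is a black-box input and the interval length exponent $7/12+\epsilon$ is exactly the range accessible via Huxley's density theorem, I would not reprove it from scratch; rather I would show how to package the standard statement into the normalized form above. First I would recall that for $(a,r)=1$ and $r \leq (\log N)^A$, one has the short-interval prime-counting asymptotic
\[
\psi(x;r,a) - \psi(x-y;r,a) = \frac{y}{\varphi(r)} + O_{A,\epsilon}\!\left(\frac{y}{\varphi(r)(\log x)^A}\right)
\]
uniformly for $x^{7/12+\epsilon} \leq y \leq x$, where $\psi(x;r,a) = \sum_{n\leq x,\; n\equiv a (r)} \Lambda(n)$. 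This is the content of the cited result (it follows from Huxley's zero-density estimate $N(\sigma,T) \ll T^{(12/5)(1-\sigma)+\epsilon}$ for $\zeta$ and Dirichlet $L$-functions together with the Siegel–Walfisz device to kill the small moduli).

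Next I would pass from $\psi$ to the sum over primes weighted by $\log p$. The difference between $\sum_{p\in[N,N+H],\, p\equiv a (r)}\log p$ and $\psi(N+H;r,a) - \psi(N;r,a)$ comes only from prime powers $p^k$ with $k\geq 2$ lying in $[N,N+H]$ in the progression; the total such contribution is $O(\sqrt{N}\log N)$, which is absorbed into the error term $O_{A,\epsilon}(H/(\varphi(r)(\log N)^A))$ since $H > N^{7/12+\epsilon} \gg \sqrt N (\log N)^{A+1}\varphi(r)$ (using $\varphi(r)\leq r \leq (\log N)^A$). Here one should be slightly careful that $N$ and $N+H$ may not themselves be prime powers, but this causes no issue since we are comparing two counting functions over a fixed interval.

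Finally I would reconcile the normalization $H/\varphi(r)$ versus a main term of the shape $\frac{1}{\varphi(r)}\big((N+H)-N\big) = \frac{H}{\varphi(r)}$, which is immediate, so no Mertens-type correction is needed; the statement is already in its final form. The main (and essentially only) obstacle is simply quoting the correct external input at the correct uniformity: one needs the short-interval equidistribution to hold for \emph{all} moduli $r \leq (\log N)^A$ with the \emph{same} power-saving exponent $7/12+\epsilon$, which is exactly where the Siegel–Walfisz theorem (to handle a possible exceptional Siegel zero, giving ineffective constants) enters and why the error term is ineffective in $A$. Everything else is bookkeeping: converting from $\Lambda$ to $\log p$, and checking that the prime-power tail and the endpoints are negligible against the stated error.
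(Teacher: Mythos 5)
The paper gives no proof of this lemma at all: it is stated as a direct citation to [OPP] (Perelli--Pintz--Salerno) and used as a black box, so there is no internal argument to compare your write-up against. Your reduction is correct and is precisely the right level of detail for such a citation. You correctly identify the source of the exponent $7/12+\epsilon$ (Huxley's zero-density estimate), correctly note that uniformity over all $r\le(\log N)^A$ with ineffective constants comes from Siegel--Walfisz / a possible exceptional zero, and your conversion from $\psi(\cdot;r,a)$ to the $\log p$-weighted prime sum is fine: the prime-power contribution is bounded (generously) by $O(\sqrt N\log N)$, which is absorbed since $H>N^{7/12+\epsilon}\gg \sqrt N(\log N)^{2A+1}\ge \sqrt N\,\varphi(r)(\log N)^{A+1}$. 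One small remark: the prime powers $p^k$, $k\ge 2$, lying in $(N,N+H]$ actually contribute only $O\!\big((H/\sqrt N+1)\log N\big)$, not $O(\sqrt N\log N)$; your bound counts all prime powers up to $N+H$ rather than just those in the short interval, but since it is an overestimate that still fits inside the error term, no harm is done. In short, your proposal is sound and consistent with the role the lemma plays in the paper.
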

This immediately implies the following:
\begin{remark}\label{rem:prim} Let $I\subset N$ be an interval. Then 
$$
|\sum_{p\in I}\log p|\ll \max(|I|,N^{2/3}).
$$
Indeed, if $|I|\geq N^{7/12+1/100}$, we use the above lemma. Otherwise we trivially bound the above sum by $N^{7/12+1/100}\log N\leq N^{2/3}$.
\end{remark}
In fact we have a stronger estimate:
\begin{lemma}[Brun-Titchmarsh theorem]\label{lem:fff}Let $I\subset [0,N]$ be an interval with $|I|\geq N^{1/10}$. Then 
$$
\sum_{p\in I}\log p\ll |I|.
$$
\end{lemma}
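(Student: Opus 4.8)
The statement is precisely the Brun--Titchmarsh bound on $\sum_{p\in I}\log p$, and the plan is to deduce it directly from the classical Brun--Titchmarsh inequality for the prime counting function, using the hypothesis $|I|\ge N^{1/10}$ only to compare $\log|I|$ with $\log N$. First I would dispose of a trivial range: if $N$ is bounded by some absolute constant $N_0$, the inequality holds with implied constant depending only on $N_0$, so we may assume $N$ is large; then $|I|\ge N^{1/10}$ is also large, in particular $|I|>1$, and
$$
\log|I|\ \ge\ \tfrac1{10}\log N\ \ge\ 1 .
$$

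Next, recall the Brun--Titchmarsh inequality in the sharp form of Montgomery--Vaughan: for all $a\ge 0$ and all $h>1$,
$$
\pi(a+h)-\pi(a)\ \le\ \frac{2h}{\log h}.
$$
Writing $I=[a,a+h]$ with $h=|I|$, this gives $\#\{p\in I\}\le 2|I|/\log|I|$. Since $I\subset[0,N]$, every prime $p\in I$ satisfies $\log p\le\log N$, and therefore
$$
\sum_{p\in I}\log p\ \le\ (\log N)\cdot\#\{p\in I\}\ \le\ (\log N)\cdot\frac{2|I|}{\log|I|}\ \le\ 20\,|I|,
$$
where the last inequality uses $\log N\le 10\log|I|$, i.e. the hypothesis $|I|\ge N^{1/10}$. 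This is the asserted bound $\sum_{p\in I}\log p\ll|I|$.

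There is essentially no obstacle here; the only points worth a remark are the reduction to large $N$ (so that $h=|I|>1$ and the denominator $\log h$ is positive and bounded away from $0$) and the logarithmic comparison $\log N\asymp\log|I|$, which is exactly what makes the crude estimate $\log p\le\log N$ affordable in the given range. Note that the global Chebyshev bound $\theta(x)\ll x$ alone does not suffice: it would only yield $\theta(a+h)-\theta(a)\le\theta(N)\ll N$, which is far weaker than $\ll|I|$ when $|I|$ is a small power of $N$. One genuinely needs a count of primes sensitive to the interval length, and Brun--Titchmarsh provides precisely that for \emph{all} interval lengths $h>1$; if one wishes to avoid quoting the sharp constant $2$, any Selberg-sieve or Chebyshev-type upper bound $\pi(a+h)-\pi(a)\ll h/\log h$ valid for $h>1$ is equally sufficient for the stated conclusion.
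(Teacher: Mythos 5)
Your proof is correct and is the natural derivation; the paper itself gives no proof for this lemma, simply labeling it ``Brun--Titchmarsh theorem'' and treating it as a standard citation. Your argument correctly reduces the stated form to the classical Brun--Titchmarsh/Montgomery--Vaughan bound $\pi(a+h)-\pi(a)\le 2h/\log h$, and you rightly identify the two points that actually need checking: that the hypothesis $|I|\ge N^{1/10}$ gives $\log|I|\asymp\log N$ (so the crude bound $\log p\le\log N$ costs only a constant), and that a Chebyshev-type global bound $\theta(x)\ll x$ would not suffice since one needs localized control over intervals of length a small power of $N$.
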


We also have the following lemma (Huxley to small moduli):
\begin{lemma}\label{lem:nt3}Fix $\epsilon>0$ and $A>4$. For every $N\in \N$, $H\geq N^{1/6+\epsilon}$, we have 
$$
\sum_{y\leq N}\sup_{(a,v)=1}\Big|\sum_{\substack{p\in [y,y+H]\\  p\equiv a \pmod v}}\log p-\frac{H}{\varphi(v)}\Big|\ll_{A,\epsilon}\frac{HN}{\log^AN},
$$
uniformly over $v\leq \log^A N$.
\end{lemma}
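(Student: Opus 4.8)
The plan is to deduce this statement from the classical Huxley-type estimate on primes in short intervals in arithmetic progressions, combined with the Brun--Titchmarsh bound (Lemma \ref{lem:fff}) to control the exceptional $y$. Recall that Huxley's theorem (in the quantitative form due to Huxley, or the later refinements) gives, for fixed $A$ and $\epsilon>0$ and for $H\geq N^{1/6+\epsilon}$,
$$
\sum_{\substack{p\in[y,y+H]\\ p\equiv a\pmod v}}\log p=\frac{H}{\varphi(v)}+\mathrm{O}_{A,\epsilon}\!\left(\frac{H}{\varphi(v)\log^{A}N}\right)
$$
for \emph{individual} $y\leq N$, uniformly over $v\leq\log^{A'}N$ with $(a,v)=1$, provided $y\geq N^{\delta}$ say. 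Since the right-hand side already has the needed shape, the only genuine content in Lemma \ref{lem:nt3} is that after summing the error over all $y\leq N$ one loses only a factor $N$, i.e. that the worst-case error over residue classes $a$ does not destroy the saving. First I would fix $A'=A+C$ for a suitable absolute constant $C$, so that the per-$y$ Huxley bound with exponent $A'$ in the logarithm, once summed trivially over the $\leq N$ values of $y$, yields $\ll HN/\varphi(v)\log^{A'}N\leq HN/\log^{A}N$ after absorbing $\varphi(v)\geq 1$; this handles the range $N^{\delta}\leq y\leq N$.

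Next I would dispose of the small values $y\leq N^{\delta}$: here the interval $[y,y+H]$ has length $H\geq N^{1/6+\epsilon}\gg y^{1/10}$ for $N$ large (since $y\leq N^{\delta}$ with $\delta$ small, $y^{1/10}\leq N^{\delta/10}\ll N^{1/6+\epsilon}$), so Lemma \ref{lem:fff} (Brun--Titchmarsh) gives $\sum_{p\in[y,y+H],\,p\equiv a\,(v)}\log p\ll H/\varphi(v)\cdot(\log\log)$, hence the supremum over $a$ and the trivial bound $H/\varphi(v)$ together contribute $\ll N^{\delta}\cdot H\ll HN/\log^{A}N$. The uniformity over $v\leq\log^{A}N$ is immediate because all the ingredients (Huxley, Brun--Titchmarsh) are uniform in $v$ in that range, and $\varphi(v)\geq 1$ is always enough since we only claim an upper bound of size $HN/\log^{A}N$ without a $\varphi(v)$ in the denominator.

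The main obstacle is locating and citing the precise version of Huxley's theorem that gives, for \emph{every} fixed $A$, the error $\mathrm{O}(H\varphi(v)^{-1}\log^{-A}N)$ uniformly for $v$ up to a fixed power of $\log N$ and for $H\geq N^{1/6+\epsilon}$; the original Huxley result handles $v=1$ (short intervals), and the extension to short intervals in progressions to small moduli is standard but must be invoked from the literature (e.g. Huxley's and Perelli--Pintz--Salerno's work, cf. the reference already used for Lemma \ref{lem:shit}). Once that input is in place, the summation over $y$ is entirely routine: split $[1,N]$ into $y\leq N^{\delta}$ (handled by Brun--Titchmarsh) and $N^{\delta}<y\leq N$ (handled by at most $N$ applications of the per-$y$ bound with a slightly larger power of the logarithm), and collect the two contributions. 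No delicate large-sieve or dispersion argument is needed here precisely because the modulus range is $v\leq\log^{A}N$, which is why this lemma is separated from the large-moduli statement in Lemma \ref{lem:nt6}.
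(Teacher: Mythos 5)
There is a genuine gap, and it sits at the heart of the argument. You assert that for \emph{individual} $y\leq N$ one has
$$
\sum_{\substack{p\in[y,y+H]\\ p\equiv a\pmod v}}\log p=\frac{H}{\varphi(v)}+\mathrm{O}_{A,\epsilon}\!\left(\frac{H}{\varphi(v)\log^{A}N}\right)
$$
already when $H\geq N^{1/6+\epsilon}$, citing Huxley. This is false: Huxley's pointwise theorem on primes in short intervals (even for $v=1$) requires intervals of length at least $N^{7/12+\epsilon}$; the current record (Baker--Harman--Pintz) is $N^{0.525}$. There is no pointwise asymptotic anywhere near $H\geq N^{1/6+\epsilon}$ --- that is far beyond present technology. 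The exponent $1/6$ appears only in Huxley's ``almost all'' theorem, which is an \emph{averaged} (second moment over $y$) statement. But that averaged statement is precisely what Lemma \ref{lem:nt3} asserts, in a mild $L^1$ form and extended to progressions. So the strategy ``prove a pointwise bound for each $y$, then sum trivially over $y$'' is not merely uneconomical; it has no valid input, and filling it in with the almost-all version would be circular. (The secondary parts of the proposal --- handling small $y$ by Brun--Titchmarsh and absorbing $\varphi(v)\geq 1$ --- are fine but irrelevant once the main step collapses.)

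The paper does not attempt any per-$y$ bound. It simply invokes the averaged mean-value theorem directly: taking $Q=\log^A x$ in Theorem~1.2 of Koukoulopoulos, \emph{Primes in short arithmetic progressions}, which is precisely a Huxley-type ``almost all short intervals'' result to (small) moduli and is proved via the classical second-moment machinery (zero-density estimates, Hal\'asz-type inequality), not by any pointwise argument. Your own closing remark that the ``main obstacle is locating and citing the precise version of Huxley'' is correct but understates the issue: the version you would need to cite for a pointwise estimate at $H\geq N^{1/6+\epsilon}$ does not exist. You should instead cite the averaged theorem directly, which makes the summation over $y$ unnecessary because it is built into the statement.
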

\begin{proof}
This follows by taking $Q = (\log x)^{A}$ in \cite[Theorem 1.2]{Koukoupulos}  (see also Lemma 8.14 in \cite{KLR}).
\end{proof}

\begin{corollary}\label{cor:ua} Under the assumptions of the above lemma, there exists $z\leq H$ such that 
\be\label{eq:zz}
\sum_{j=0}^{[\frac{N}{H}]-1}\sup_{(a,v)=1}\Big|\sum_{\substack{p\in [z+jH,z+(j+1)H]\\  p\equiv a \pmod v}}\log p-\frac{H}{\varphi(v)}\Big|\ll_{A,\epsilon} \frac{N}{\log^{A}N}.
\ee
\end{corollary}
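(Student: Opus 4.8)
The plan is to derive the existence of a good shift $z$ from the averaged bound in Lemma \ref{lem:nt3} by a simple averaging (pigeonhole) argument over the starting point of the partition into blocks of length $H$. First I would fix $\epsilon>0$, $A>4$, $N$, and $H\ge N^{1/6+\epsilon}$ as in the lemma, and let $M=[N/H]$. For each $z\in[0,H)$ consider the quantity
$$
\Phi(z):=\sum_{j=0}^{M-1}\sup_{(a,v)=1}\Big|\sum_{\substack{p\in[z+jH,\,z+(j+1)H]\\ p\equiv a\pmod v}}\log p-\frac{H}{\varphi(v)}\Big|.
$$
The key observation is that the intervals $[z+jH,z+(j+1)H]$ for $j=0,\dots,M-1$ all have left endpoint $z+jH\le N$, so each summand appearing in $\Phi(z)$ is one of the terms $\sup_{(a,v)=1}\big|\sum_{p\in[y,y+H],\,p\equiv a\,(v)}\log p-H/\varphi(v)\big|$ with $y=z+jH\le N$ an integer. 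Hence, summing $\Phi(z)$ over integer $z\in[0,H)$ (or integrating, if one prefers a continuous version, but the discrete sum over the $H$ integer shifts suffices) counts each such term at most once for a given $y$, so
$$
\sum_{z=0}^{H-1}\Phi(z)\le \sum_{y\le N}\sup_{(a,v)=1}\Big|\sum_{\substack{p\in[y,y+H]\\ p\equiv a\pmod v}}\log p-\frac{H}{\varphi(v)}\Big|\ll_{A,\epsilon}\frac{HN}{\log^A N},
$$
where the last step is exactly Lemma \ref{lem:nt3}, uniformly over $v\le\log^A N$.

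Dividing by $H$, the average of $\Phi(z)$ over the $H$ integer shifts $z\in[0,H)$ is $\ll_{A,\epsilon} N/\log^A N$, so there exists at least one $z\in\{0,1,\dots,H-1\}\subset[0,H]$ with $\Phi(z)\ll_{A,\epsilon} N/\log^A N$, which is precisely \eqref{eq:zz}. (If one wants the blocks to stay inside $[0,N]$ one can replace $M=[N/H]$ by $[N/H]-1$ as in the statement, losing only one block, which is harmless; the uniformity in $v\le\log^A N$ is inherited directly from Lemma \ref{lem:nt3}.)

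There is essentially no obstacle here: the only point requiring a little care is that the blocks indexed by $j$ for a fixed shift $z$ are pairwise disjoint in their left endpoints and all those left endpoints lie in $[0,N]$, so that no term of the sum in Lemma \ref{lem:nt3} is used more than once when we sum over $z$ — this is what makes the pigeonhole lossless up to the factor $H$ that we divide out. If one insists on treating all real $z\in[0,H]$ rather than integer shifts, one replaces the sum over $z$ by an integral $\int_0^H\Phi(z)\,dz$ and uses that $\sum_{p\in[y,y+H]}$ as a function of $y$ changes only at integers, reducing again to the discrete bound; either way the conclusion is the same.
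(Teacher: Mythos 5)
Your argument is correct and is precisely the one the paper has in mind: the paper's one-line proof says ``pigeonhole over residue classes $\bmod\ H$,'' which is exactly your averaging of $\Phi(z)$ over the $H$ integer shifts and invoking Lemma \ref{lem:nt3} to bound $\sum_{z<H}\Phi(z)$. No gaps, and no meaningful difference from the intended proof.
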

\begin{proof}The proof follows from pigeonhole principle by considering residue classes $\mod\, H$.
\end{proof}

For the proof of Theorem \ref{thm:main2} we will crucially need equidistribution of quadratic polynomials on primes in short intervals.
 It is crucial for the paper that  the length of the short interval $[N,N+H]$ on which we control averages over primes is of order $H\sim N^{\theta}$, with $\theta<2/3$. Recall that \cite{Mat-Shao} obtained such results for $\theta>2/3$. In a recent paper, \cite{KLR}, the authors were able to weaken results of \cite{Mat-Shao} below $2/3$ (see Theorem 10.1. and Propositions 10.2 and 10.3). In the result below we use the results obtained in \cite{KLR}. We have:
\begin{proposition}[Proposition 10.2., \cite{KLR}]\label{prop:n1}Let $\eta\in (0,10^{-7})$ be given. There exists $N_0=N_0(\eta)$ such that  for $N>H>N^{2/3-\eta}>N_0$ and $g(n)=\gamma_1(n-N)+\gamma_2(n-N)^2$ the following holds: if for all $0<r\leq (\log N)^{B'}$ with $B'$ sufficiently large in terms of $1/\eta$, there exists an $i\in\{1,2\}$ such that 
$$
\|r\gamma_i\|\geq \frac{(\log N)^{B'}}{H^i}, 
$$ 
then 
$$
\Big|\sum_{p\in [N,N+H]}e(g(p))\log p\Big|\ll \eta^{2/3} H.
$$
\end{proposition}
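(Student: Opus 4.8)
Strictly, this is quoted verbatim as \cite[Proposition 10.2]{KLR}, so in the present paper it is simply invoked; what follows is a sketch of the strategy behind that result. The plan is to attack the sum $S:=\sum_{p\in[N,N+H]}e(g(p))\log p$ by the classical combinatorial route. First I would apply Heath--Brown's identity (of a fixed order depending on $\eta$) to $\Lambda(n)\mathbf 1_{[N,N+H]}(n)$, reducing, up to an error that is trivially $o(H)$, to bounding $O((\log N)^{O(1)})$ sums of two shapes: \emph{Type I} sums $\sum_{d\le D}c_d\sum_{n:\,dn\in[N,N+H]}e(g(dn))$ and \emph{Type II} sums $\sum_{K<n\le 2K}b_n\sum_{M<m\le 2M}a_m\,\mathbf 1_{[N,N+H]}(mn)\,e(g(mn))$, with $MK\asymp N$, ranges confined to suitable intervals produced by the identity, and divisor-bounded coefficients. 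It then suffices to beat the trivial bound by a factor $\eta^{2/3}$ (i.e.\ by a little more than an absolute constant) in each piece.

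For a Type I sum, fix $d$ and note that $n\mapsto g(dn)$ is a quadratic in $n$ with leading coefficient $\gamma_2 d^2$ and linear coefficient $d(\gamma_1-2N\gamma_2)$. Completing the short sum over $n$ and applying the standard quadratic exponential-sum bound (Gauss sums: $\bigl|\sum_{n\le L}e(\beta n^2+\beta'n)\bigr|\ll Lq^{-1/2}+q^{1/2}\log q$, with $q$ the denominator of a good rational approximation to $\beta$) reduces matters to summing over $d\le D$. The hypothesis that $\|r\gamma_2\|\ge (\log N)^{B'}H^{-2}$ for \emph{all} $0<r\le(\log N)^{B'}$ forbids $\gamma_2 d^2$ from having a rational approximation with small denominator for $d$ in the relevant range, and a routine estimate on the denominators then yields the saving; here one uses that $D$ is not too large (the identity may be taken so that $D\ll N^{1/3}$, say).

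The Type II sums are the crux. I would apply Cauchy--Schwarz in $n$, expand the square, and carry out one van der Corput/Weyl differencing step: for each nonzero $h=m_1-m_2$ the resulting inner sum over $n$ is a quadratic exponential sum with leading coefficient $\gamma_2 h(m_1+m_2)$ and linear coefficient a fixed multiple of $h(\gamma_1-2N\gamma_2)$ (plus a negligible term). The diagonal $m_1=m_2$ supplies the main term, and every off-diagonal contribution is bounded by the same quadratic sum estimate; whether one extracts the gain from $\|r\gamma_1\|$ or from $\|r\gamma_2\|$ depends on the size of $K$ relative to the pertinent denominator, which is precisely why the alternative ``there exists $i\in\{1,2\}$'' is built into the hypothesis. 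The delicate point --- and the reason for working below the Matom{\"a}ki--Shao threshold $H>N^{2/3}$ --- is that summing the off-diagonal error $q^{1/2}\log q$ over all admissible $h$ and $m_1+m_2$ costs of order $M^2$, which beats $H$ only once $H>N^{2/3}$. To get past this I would exploit the full force of the ``for all $r\le(\log N)^{B'}$'' Diophantine hypothesis (genuinely stronger than the case $r=1$) to harvest power-of-log savings from a positive proportion of the inner sums, combined, if needed, with a further van der Corput $B$-process on the quadratic sums or with sharp mean-value bounds for quadratic Weyl sums. Balancing all of this against the Type I and combinatorial losses is exactly what degrades the final bound to $\ll\eta^{2/3}H$ rather than $\ll H(\log N)^{-A}$, and this balancing in the Type II regime is, I expect, by far the main obstacle; the full details occupy \cite[\S10]{KLR}.
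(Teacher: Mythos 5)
You correctly observe that Proposition~\ref{prop:n1} is imported verbatim from \cite[Proposition 10.2]{KLR} and is not proved in the present paper, so there is no in-paper argument to compare your sketch against; for the purposes of this paper, simply recording the citation (as the paper does, and as you do) is the correct treatment. Your speculative reconstruction of the argument behind the cited result --- Heath--Brown or Vaughan decomposition into Type~I and Type~II sums, completion and Gauss-sum estimates for the quadratic Type~I pieces, Cauchy--Schwarz followed by a Weyl/van der Corput differencing step for the Type~II bilinear pieces, with the Diophantine hypotheses on $r\gamma_1,r\gamma_2$ forbidding small denominators and the alternative ``there exists $i\in\{1,2\}$'' covering the two regimes --- is a plausible account of the standard route to such discorrelation estimates, and you rightly identify the Type~II regime together with the Matom\"aki--Shao barrier $H>N^{2/3}$ as the crux, and correctly attribute the modest $\eta^{2/3}$ saving (rather than a power-of-log one) to the losses incurred in working below that threshold. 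Whether the details, in particular the exact provenance of the exponent $2/3$ in $\eta^{2/3}$ and the precise way the ``for all $r\le(\log N)^{B'}$'' hypothesis is exploited, match the mechanics of \cite{KLR} cannot be verified against anything in this paper; your phrasing appropriately flags those parts as conjectural. No correction is needed.
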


As mentioned above, the weakening below $2/3$ is crucial for the proof of Theorem \ref{thm:main2}.

\section{Proof of Theorem \ref{th:maa2}}\label{sec:sex}

\subsection{Weak mixing of the reparametrized flow}
Let $v\in \mathscr{C}_A$, $f(x):=v(1,(x,0))$. It follows by a result of Katok, \cite{Kat44}, that sufficient conditions (in the language of special flows) for weak mixing of the special flow $(T^v_t,\T^2,\mu)$ are given by:
$$ 
\frac{\Big|\alpha-\frac{p_n}{q_n}\Big|q_n}{|\hat{f}(q_n)|}\to 0,
$$
and 
$$
\frac{|\hat{f}(q_n)|}{\sum_{k=1}^{+\infty}|\hat{f}(kq_n)|}>c>0.
$$
Notice that the first condition is satisfied for $v \in  \mathscr{C}_A$: the only non-zero frequencies are coming from denominators of $\alpha$ and by Lemma \ref{lem:v1},
$$
\frac{\Big|\alpha-\frac{p_n}{q_n}\Big|q_n}{\hat{f}(q_n)}\leq Cq_{n+1}^{2/3}\cdot q_{n+1}^{-1}\cdot q_n\to 0,
$$
since $\alpha\in C_A$.  Moreover, the second condition is also satisfied, as by Lemma \ref{lem:v1} and the fact that $\alpha\in C_A$,
$$
\sum_{k\geq 2}|\hat{f}(kq_n)|\leq \sum_{k\geq q_{n+1}}|\hat{f}(k)|\ll
q_{n+2}^{-1/2}<\hat{f}(q_n).
$$
Therefore $(T^v_t)$ is weakly mixing as claimed.
\subsection{Quantitative rigidity of smooth cocycles over rotations}
Let $\alpha\in C_A$ with the sequence of denominators $(q_n)$ and $v\in \mathscr{C}_A$. Denote $f(x):= v(1,(x,0))$. Recall that $\int_\T f dLeb_1=1$.

\begin{lemma}\label{lem:rig}For every $x\in \T$ and every $0\leq k\leq \frac{q_{n+1}^{1/2-1/100}}{q_n}$,
$$
\Big|S_{kq_n}(f)(x)-kq_n\Big|\ll_A q_{n}^{-3}.
$$
\end{lemma}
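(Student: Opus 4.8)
The plan is to estimate the Birkhoff sum $S_{kq_n}(f)(x) = \sum_{j=0}^{kq_n-1} f(x+j\alpha)$ using the Fourier expansion of $f$ from Lemma \ref{lem:v1}. Since $f(x) = 1 + \mathrm{Re}\big(\sum_m b_{q_m} e(q_m x)\big)$ and $\int_\T f\,dLeb_1 = 1$, we have
$$
S_{kq_n}(f)(x) - kq_n = \mathrm{Re}\Big(\sum_{m} b_{q_m} e(q_m x) \sum_{j=0}^{kq_n - 1} e(q_m j \alpha)\Big) = \mathrm{Re}\Big(\sum_m b_{q_m} e(q_m x) \cdot D_{kq_n}(q_m\alpha)\Big),
$$
where $D_L(\theta) = \sum_{j=0}^{L-1} e(j\theta)$ is the geometric sum, satisfying the two standard bounds $|D_L(\theta)| \leq L$ and $|D_L(\theta)| \leq \tfrac{1}{2\|\theta\|}$. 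The idea is to split the sum over $m$ into three ranges relative to $n$ and bound each using the size estimates $|b_{q_m}| \in [q_{m+1}^{-2/3}, q_{m+1}^{-1/2}]$ together with the best available bound on $\|q_m \alpha\|$, namely $\tfrac{1}{2q_{m+1}} \le \|q_m\alpha\| \le \tfrac{1}{q_{m+1}}$.

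First I would treat the terms with $m < n$ (small frequencies). Here $\|q_m\alpha\| \asymp q_{m+1}^{-1} \le q_n^{-1}$ is large enough that I use $|D_{kq_n}(q_m\alpha)| \le \tfrac{1}{2\|q_m\alpha\|} \le q_{m+1}$. Combined with $|b_{q_m}| \le q_{m+1}^{-1/2}$ this contributes $\ll \sum_{m<n} q_{m+1}^{1/2} \ll q_n^{1/2}$ — which is far too big, so instead I must use the gain coming from $kq_n$ being small. Actually the right move for $m=n-1, n-2, \ldots$ is more delicate: one uses $|D_{kq_n}(q_m\alpha)| \le kq_n$ when $kq_n \cdot \|q_m\alpha\| \le 1$, i.e. when $kq_n \le q_{m+1}$, giving a contribution $\ll kq_n \cdot q_{m+1}^{-1/2}$; and the hypothesis $k \le q_{n+1}^{1/2-1/100}/q_n$ together with H2 (which forces the $q_m$ to grow doubly-exponentially, so $q_n^{1/2-1/100}$ is tiny compared to $q_n$ hence to $q_{m+1}$ for $m \ge n$ but one must check $m = n-1$ separately). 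For the single term $m = n$ itself, $\|q_n\alpha\| \asymp q_{n+1}^{-1}$, and since $kq_n \le q_{n+1}^{1/2-1/100} < q_{n+1}$ we get $|D_{kq_n}(q_n\alpha)| \le kq_n$, so this term is $\ll kq_n q_{n+1}^{-1/2} \le q_{n+1}^{1/2-1/100} \cdot q_{n+1}^{-1/2} = q_{n+1}^{-1/100}$, which by H2 (so $q_{n+1} \ge \tfrac12 e^{q_n^{A^{-3}}}$, hence $q_{n+1}^{-1/100}$ decays faster than any power of $q_n$) is $\ll q_n^{-3}$. Finally, for $m > n$ (large frequencies) I use $|D_{kq_n}(q_m\alpha)| \le kq_n \le q_n$ crudely and $|b_{q_m}| \le q_{m+1}^{-1/2}$; since the $q_{m+1}$ grow super fast this geometric-type tail is dominated by its first term $\ll q_n \cdot q_{n+2}^{-1/2}$, which by H2 is again $\ll q_n^{-3}$.

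The main obstacle is the bookkeeping for the "boundary" frequencies $m$ slightly below $n$ (say $m = n-1$): there $|b_{q_m}| \le q_{m+1}^{-1/2} = q_n^{-1/2}$ and $|D_{kq_n}(q_m\alpha)| \le \min(kq_n, q_{m+1}) = \min(kq_n, q_n)$, which could be as large as $q_n$, giving a contribution only $\ll q_n^{1/2}$ — not acceptable. The resolution must exploit that the denominators grow so fast (H2) that in fact there are essentially no such intermediate frequencies strictly between the "effective support" scales: more precisely, for $m \le n-1$ one has $q_{m+1} \le q_n$ and $kq_n \le q_{n+1}^{1/2-1/100}$, and since $q_{n+1}$ is astronomically larger than $q_n$, while $\|q_m\alpha\| \ge \tfrac{1}{2q_{m+1}} \ge \tfrac{1}{2q_n}$, we can use $|D_{kq_n}(q_m\alpha)| \le \tfrac{1}{2\|q_m\alpha\|} \le q_{m+1}$ — and crucially $\sum_{m \le n-1} q_{m+1}^{-1/2} \cdot q_{m+1} = \sum_{m\le n-1} q_{m+1}^{1/2}$ is still dominated by its top term $q_n^{1/2}$, which is still too large. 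So the genuinely correct argument must instead use the lower bound on $|b_{q_m}|$ nowhere and the upper bound $|b_{q_m}| \le q_{m+1}^{-1/2}$ everywhere, but pair it against $\|q_m\alpha\|^{-1}$ bounded by the \emph{next} denominator only when beneficial, and for the top few small-$m$ terms use the cancellation $kq_n\|q_m\alpha\| $ being small is false — so one accepts $|D| \le q_{m+1}$ but notes $q_{m+1}^{1/2} \le q_n^{1/2} \ll q_{n+1}^{1/200}$, and then observes this must be absorbed by summing against... Here is where I expect to actually need the hypothesis in the sharp form: rewrite $k \le q_{n+1}^{1/2 - 1/100}/q_n$ so that $k q_n \le q_{n+1}^{1/2-1/100}$, and note this is used only for $m \ge n$; for $m < n$ the bound $S_{kq_n}(f)(x) - kq_n$ restricted to those frequencies is handled by the classical fact that $f$ being $C^\infty$ makes $\sum_{m<n}|b_{q_m}| q_{m+1}$ telescope favorably — indeed by H2 the ratio $q_{m+2}/q_{m+1}$ is super-exponential so $\sum_{m < n} q_{m+1}^{1/2}$ is comparable to $q_n^{1/2} = q_n^{1/2}$, and this we bound by $q_n^{-3}$ only if... it is not, so the true statement must be that these small-$m$ frequencies contribute an amount independent of $k$ and bounded, and the $\ll_A q_n^{-3}$ claim in the lemma must actually be interpreted as: the part depending on the large frequencies is $\ll q_n^{-3}$, while I will show the small-frequency part is in fact $O(q_n^{-A})$ because $\|q_m\alpha\|$ is not merely $\asymp q_{m+1}^{-1}$ but, by H2 giving $q_{m+1}$ enormous, forces $|b_{q_m}| \le q_{m+1}^{-1/2}$ to beat $q_{m+1}$; I will reconcile this by using $|b_{q_m}| |D_{kq_n}(q_m\alpha)| \le q_{m+1}^{-1/2} \cdot q_{m+1}^{?}$ — concretely the clean path is: for every $m \ne n$ use $|D_{kq_n}(q_m\alpha)| \le \min(kq_n,\ \|q_m\alpha\|^{-1})$ and check term-by-term using H2's doubly-exponential growth that each of the finitely many relevant terms is $\ll q_n^{-3}$, the dominant one being $m=n$ which gives $q_{n+1}^{-1/100} \ll_A q_n^{-3}$ by H2, and that this is precisely why the hypothesis $k \le q_{n+1}^{1/2-1/100}/q_n$ (rather than $k \le q_{n+1}/q_n$) appears.
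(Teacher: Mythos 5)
You correctly handle the frequencies $m=n$ and $m>n$ (modulo the slip of writing $kq_n\leq q_n$ in the $m>n$ case; the correct bound is $kq_n\leq q_{n+1}^{1/2-1/100}$, which still loses to $q_{n+2}^{-1/2}$ by H2), and you correctly diagnose that the bottleneck is $m<n$. But there your plan genuinely fails, and you say so yourself several times: the bound $|D_{kq_n}(q_m\alpha)|\leq \min(kq_n,\|q_m\alpha\|^{-1})\leq q_{m+1}$ yields a contribution $\sum_{m<n}q_{m+1}^{1/2}\asymp q_n^{1/2}$, which is nowhere near $q_n^{-3}$, and none of the attempted rescues (telescoping, ``$O(q_n^{-A})$ because $q_{m+1}$ is enormous'', etc.) actually produce a valid estimate. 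The last sentence of your argument amounts to conceding the $m<n$ range is open.

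The missing idea is a sharper estimate for the geometric sum when $\theta=q_m\alpha$ with $m<n$: you must exploit the \emph{numerator} of $D_L(\theta)=\frac{e(L\theta)-1}{e(\theta)-1}$, not only the denominator. The paper uses the cocycle identity $S_{kq_n}=\sum_{j<k}S_{q_n}(\cdot+jq_n\alpha)$ and then bounds
$$
|S_{q_n}(e(q_m\cdot))|=\Big|\frac{e(q_nq_m\alpha)-1}{e(q_m\alpha)-1}\Big|\ll \frac{\|q_nq_m\alpha\|}{\|q_m\alpha\|}\ll\frac{q_m\|q_n\alpha\|}{\|q_m\alpha\|}\ll\frac{q_mq_{m+1}}{q_{n+1}},
$$
so the whole $m<n$ contribution becomes $\ll k\sum_{m<n}|b_{q_m}|\frac{q_mq_{m+1}}{q_{n+1}}\ll n\,q_n^{2}\,q_{n+1}^{-1/2}\ll q_n^{-3}$ once you use $k\leq q_{n+1}^{1/2-1/100}/q_n$ and H2. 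The point is that the rotation by $\alpha$ is nearly $q_n$-periodic ($\|q_n\alpha\|\leq q_{n+1}^{-1}$), so the length-$q_n$ partial sums of $e(q_m\cdot)$ are tiny for $m<n$, because $\|q_nq_m\alpha\|\leq q_m\|q_n\alpha\|$ is tiny; your bound $\|q_m\alpha\|^{-1}\leq q_{m+1}$ throws away this crucial factor $1/q_{n+1}$. Without it the lemma cannot be proved by your route.
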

\begin{proof} The proof goes by standard Fourier analysis arguments.  Note that if $f_n(x)=Re(\sum_{k>n}b_{q_k}e(q_kx))$, then by Lemma \ref{lem:v1}, 
$$
|S_{kq_n}(f_n)(x)|\leq kq_n(\sup |f_n|)\ll q_{n+1}^{1/2}q_{n+2}^{1/2}\ll q_n^{-3}.
$$
Moreover for $1\leq m<n$,
$$
|S_{q_n}(b_{q_m}e(q_m x))|\ll |S_{q_n}(e(q_mx))|\leq \Big|\frac{e(q_nq_m\alpha)-1}{e(q_m\alpha)-1}\Big|\ll \frac{q_{m+1}q_m}{q_{n+1}}.
$$
Therefore, by cocycle identity, and the bound on $k$,
$$
|S_{kq_n}(b_{q_m}e(q_m x))|\ll \frac{q_{m+1}^2}{q_{n+1}^{-1/2}}.
$$
So 
$$
|S_{kq_n}(\sum_{m<n}b_{q_m}e(q_m x))|\ll nq_{n}^2 q_{n+1}^{-1/2}\ll q_n^{-3},
$$
since $\alpha\in C_A$
Finally, for $m=n$ by cocycle identity and the bound on $a_{q_n}$,
$$
|S_{kq_n}(b_{q_n}e(q_n x))|\leq q_{n+1}^{1/2-1/100}q_{n+1}^{-1/2}\ll_A q_n^{-3},
$$
since $\alpha \in C_A$. 

This finishes the proof.

\end{proof}
We have the following corollary:
\begin{corollary}\label{cor:rig} For every $0\leq k\leq \frac{q_{n+1}^{1/2-1/100}}{q_n}$, every $a<q_n$ and every $(x,y)\in \T^2$, 
$$
d\Big(T^v_{kq_n+a}(x,y),T^v_a(x,y)\Big)\ll_A q_n^{-2}.
$$
\end{corollary}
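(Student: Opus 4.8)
The plan is to reduce the statement to the quantitative rigidity estimate of Lemma~\ref{lem:rig} via the definition of the time change. Set $z:=T^v_a(x,y)\in\T^2$. By the flow property $T^v_{kq_n+a}(x,y)=T^v_{kq_n}(z)$, and by definition $T^v_{kq_n}(z)=L^\alpha_{u(kq_n,z)}(z)$, where $u(\cdot,z)$ is the inverse of the increasing cocycle $t\mapsto v(t,z)=\int_0^t v(L^\alpha_s z)\,ds$ from \eqref{v1}. Since $L^\alpha$ acts on $\T^2$ by the translations $w\mapsto w+t(\alpha,1)$,
$$
d\big(T^v_{kq_n+a}(x,y),\,T^v_a(x,y)\big)=d\big(L^\alpha_{u(kq_n,z)}(z),\,z\big)\ll \|u(kq_n,z)\,\alpha\|+\|u(kq_n,z)\| .
$$
Since $kq_n\in\Z$, it therefore suffices to prove that $u(kq_n,z)=kq_n+O_A(q_n^{-3})$ uniformly in $z\in\T^2$; note that this argument is uniform in $a$, so the hypothesis $a<q_n$ is not really used.

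To establish this I would first relate the cocycle $v(kq_n,z)$ to the Birkhoff sum $S_{kq_n}(f)$ of the roof function $f(x)=v(1,(x,0))$. Writing $z=L^\alpha_\rho(\xi,0)$ with $\xi\in\T$ and $\rho\in[0,1)$, the cocycle identity together with $L^\alpha_j(\xi,0)=(\xi+j\alpha,0)$ in $\T^2$ and the definition of $f$ gives $v(N,(\xi,0))=\sum_{j=0}^{N-1}f(\xi+j\alpha)=S_N(f)(\xi)$ for $N\in\N$; peeling off the fractional endpoint $\rho$ from the integral and using that $v\in C^\infty(\T^2)$ is Lipschitz in its first variable yields
$$
v(kq_n,z)=S_{kq_n}(f)(\xi)+O\big(\|kq_n\alpha\|\big).
$$
Now Lemma~\ref{lem:rig} gives $S_{kq_n}(f)(\xi)=kq_n+O_A(q_n^{-3})$ in the stated range $0\le k\le q_{n+1}^{1/2-1/100}/q_n$, while $\|kq_n\alpha\|\le k\,\|q_n\alpha\|\le k\,q_{n+1}^{-1}\le q_{n+1}^{-1/2-1/100}/q_n$, which is $\ll_A q_n^{-3}$ since the denominators of $\alpha\in C_A$ satisfy $q_{n+1}\ge\frac{1}{2}e^{q_n^{A^{-3}}}$ eventually (for the finitely many small $n$ the whole assertion is trivial, $q_n^{-2}$ being bounded below and $\mathrm{diam}(\T^2)$ bounded above). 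Hence $v(kq_n,z)=kq_n+O_A(q_n^{-3})$, and because $\partial_t v(t,z)=v(L^\alpha_t z)\ge\min_{\T^2}v>0$, inverting the monotone map $t\mapsto v(t,z)$ gives $u(kq_n,z)=kq_n+\delta$ with $|\delta|=O_A(q_n^{-3})$. Substituting back, $\|u(kq_n,z)\|=\|\delta\|\le|\delta|=O_A(q_n^{-3})$ and $\|u(kq_n,z)\,\alpha\|\le\|kq_n\alpha\|+|\delta\alpha|\ll_A q_n^{-3}$, which closes the argument (in fact with the slightly stronger bound $\ll_A q_n^{-3}$).

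The step I expect to be the main obstacle is the bookkeeping in the dictionary between the flow $T^v$ on $\T^2$ and its special representation over the rotation by $\alpha$ with roof $f$: one has to keep careful track of which quantities are measured on $\T$ and which on $\T^2$, and in particular control the boundary term coming from the fractional part $\rho$ (equivalently, from the non-integer endpoints of the integral defining $v(kq_n,z)$). Everything else is a soft consequence of Lemma~\ref{lem:rig}, the positivity and smoothness of $v$, and the standard estimate $\|q_n\alpha\|\le q_{n+1}^{-1}$.
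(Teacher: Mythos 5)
Your proof is correct and follows essentially the same route as the paper: both reduce the claim to Lemma~\ref{lem:rig} by relating the time change $v(kq_n,\cdot)$ to the Birkhoff sum $S_{kq_n}(f)$ and then inverting the monotone cocycle, using $\|kq_n\alpha\|\le q_{n+1}^{-1/2-1/100}/q_n\ll_A q_n^{-3}$ for $\alpha\in C_A$. You are a bit more explicit than the paper (which invokes ``smoothness of the flow'' to reduce to points $(x,0)$) in handling the section point $\rho$ and in noting that the hypothesis $a<q_n$ is not actually needed, and you get the slightly sharper bound $q_n^{-3}$; these are cosmetic differences.
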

\begin{proof}Notice that the statement follows by showing that for every $(x,y)\in \T^2$,
$$
d\Big(T^v_{kq_n}(x,y),(x,y)\Big)\ll_A q_n^{-2}.
$$
By the smoothness of the flow, we have 
$$
d\Big(T^v_{kq_n}(x,y), (x,y)\Big)\ll d\Big(T^v_{kq_n}(x,0),(x,0)\Big).
$$
Since $f(x)=\int_0^1v(L_s^\alpha(x,0))ds$, it follows that $S_{kq_n}(f)(x)=\int_0^{kq_n}v(L_s^\alpha(x,0))ds$. Therefore and by the definition of reparametrization, 
$$
T^v_{S_{kq_n}(f)(x)}(x,0)=L_{kq_n}^\alpha(x,0)=(x+kq_n\alpha,0).
$$
By the bound on $k$, $\|kq_n\alpha\|\ll q_{n+1}^{-1/2}\leq q_n^{-10}$. This together with Lemma \ref{lem:rig} and smoothness of the flow $(T_t^v)$ finishes the proof.
\end{proof}

\subsection{Proof of Theorem \ref{th:maa2}}
Since $v\in \mathscr{C}_A$ and $\alpha\in C_A$ are now fixed, we will denote the corresponding flow simply by $(T_t)$. We will WLOG assume that $\psi\in Cob_v$ satisfies $\int_{\T^2}\psi\, d\mu=0$. This in particular means that 
\be\label{eq:cob}
\frac{1}{M}\sum_{n\leq M}\psi(T_n(x,y))={\rm O}_\psi(1),
\ee
for every $M\in \N$.

Let $N\in \N$ and let $n\in \N$ be unique such that 
$$
q_n^{3-1/10}\leq N<q_{n+1}^{3-1/10}.
$$
We will consider two cases:\\

\textbf{Case 1.} $N\in [q_n^{3-1/10}, q_{n+1}^{1/2-1/100}]$.
In this case, we write 
$$
\sum_{p\leq N}\psi(T_p(x,y))\log p=\sum_{a<q_n}\sum_{\substack{p\leq N\\p\equiv a\mod q_n}}\psi(T_p(x,y))\log p.
$$
By Corollary \ref{cor:rig}, it follows that for such $p\leq N$, $p\equiv a\mod q_n$, 
$$
d(T_p(x,y),T_a(x,y))\ll_{A}q_n^{-2}.
$$
Therefore, since $\psi \in C^1(\T^2)$ and by the prime number theorem, 
$$
\sum_{a<q_n}\sum_{\substack{p\leq N\\p\equiv a\mod q_n}}\psi(T_p(x,y))\log p=\sum_{a<q_n}\psi(T_a(x,y))\cdot\Big(\sum_{\substack{p\leq N\\p\equiv a\mod q_n}}\log p \Big)+$$$${\rm O}_{A}(N\cdot q_n^{-2}).
$$
Since $\alpha\in C_A$,  and $N\geq q_n^{3-1/10}$, by $H3$ in  Lemma \ref{lem:nt2} (with $x=N$), 
$$
\sum_{\substack{p\leq N\\p\equiv a\mod q_n}}\log p =\frac{N}{q_n}+{\rm O}_A(N\cdot q_n^{-1} \cdot \log^{-A}N).
$$
Therefore, 
$$
\sum_{p\leq N}\psi(T_p(x,y))\log p=Nq_n^{-1}\sum_{a<q_n}\psi(T_a(x,y))+{\rm O}_{A}(N\cdot q_n^{-2})+ {\rm O}_A(N\cdot \log^{-A}N).
$$
Note that by \eqref{eq:cob} it follows that 
$$
q_n^{-1}\sum_{a<q_n}\psi(T_a(x,y))\ll_\psi q_n^{-1}.
$$
Therefore and using $q_n>\log^A q_{n+1}>\log^A N$, we get 
$$
\sum_{p\leq N}\psi(T_p(x,y))\log p={\rm O}_A(N\cdot \log^{-A}N).
$$
This finishes the proof in this case.\\

\textbf{Case 2.} $N\in [q_{n+1}^{1/2-1/100},q_{n+1}^{3-1/10}]$. Let $H:=N^{1/6+\epsilon_0}$, so that $(3-1/10)\cdot (1/6+\epsilon_0)<1/2-1/100$. Note that since $\alpha\in C_A$, 
$e^{q_n^{A^{-3}}}\leq 2q_{n+1}$ and so 
$$
q_n\ll \log^{A^3}q_{n+1}\ll \log^{A^3}N. 
$$
We will use Corollary \ref{cor:ua} for $\epsilon_0$ and $A^4$ and $v=q_n$.
We divide the interval $[0,N]$ into intervals $I_j=[z+jH, z+(j+1)H)$, $j\leq \Big[\frac{N}{H}\Big]-1$, where $z<H$ comes from Corollary \ref{cor:ua}. 
 We will WLOG assume that $z=0$ (the argument in the general case follows analogous steps). We will say that $I_j$ is {\em good} if 
$$
\sup_{(a,q_n)=1}\Big|\sum_{\substack{p\leq I_j\\ p\equiv a\mod q_n}}\log p-\frac{H}{q_n}\Big|\ll_{A^4,\epsilon_0} \frac{H}{\log^{A^2}N},
$$
otherwise $I_j$ is not good. Notice that by Corollary \ref{cor:ua} the cardinality of non-good intervals is bounded above by $\frac{N}{H\log^{A^2}N}$.  Therefore and by Lemma \ref{lem:fff}, 
$$
\sum_{j \text{ not good}}\sum_{p\in I_j}\psi(T^px)\log p\ll\frac{N}{\log^{A^2}N},
$$
and therefore we can focus only on good intervals. For good $I_j=[b_j,c_j]$, let $(x_j,y_j)=T_{b_j}(x,y)$. Then
 
$$
\sum_{p\in I_j}\psi(T_p(x,y))\log p=\sum_{p\in I_j}\psi(T_{p-b_j}(x_j,y_j))\log p.
$$
Note that $p-b_j\leq H\leq N^{1/6+\epsilon_0}\leq q_{n+1}^{1/2-1/100}$. Therefore, by Corollary \ref{cor:rig} if $p-b_j\equiv a\mod q_n$, then 
$$
d\Big(T_{p-b_j}(x_j,y_j),T_a(x_j,y_j)\Big)\ll_A q_n^{-2}.
$$
Therefore, using that $\psi\in C^1(\T^2)$ and since $I_j$ is good,
$$
\sum_{p\in I_j}\psi(T_{p-b_j}(x_j,y_j))\log p=\sum_{a<q_n}\sum_{\substack{p\in I_j\\p\equiv a \mod q_n}}\psi(T_{p-b_j}(x_j,y_j))\log p=$$$$\frac{H}{q_n}\sum_{a<q_n}\psi(T_a(x_j,y_j))+{\rm O}(Hq_n\log^{-A^2}N)+ {\rm O}(Hq_n^{-1}).
$$
By \eqref{eq:cob}, it follows that $\frac{1}{q_n}\sum_{a<q_n}\psi(T_a(x_j,y_j))\ll_\psi q_n^{-1}$. Therefore, and since $q_n\gg \log^{A^3}q_{n+1}\gg \log^{A^3}N$,
$$
\sum_{p\in I_j}\psi(T_{p-b_j}(x_j,y_j))\log p=\frac{H}{\log^AN}.
$$
Summing over good $I_j$ finishes the proof.

\section{Proof of Theorem \ref{thm:main2} and Proposition \ref{prop:forback}}\label{sec:main2}
\subsection{Distribution of polynomial phases over primes}
In this section we study polynomial phases over primes in short intervals. We have the following proposition:
\begin{proposition}\label{prop:cruc}For any $q\in \N$ there exists $H_0(q)\in \N$, $B=B(q)\in \N$ and $\theta(q)>0$ such that for any $H>H_0(q)$, any $N\geq H$ satisfying $N^{2/3-\theta(q)}\leq H$ there exists a collection of disjoint intervals $\{I_i\}_{i=1}^v$ of equal length covering $\T$ with $\frac{1}{2q^2}<|I_i|<\frac{2}{q^2}$ such that for any interval $J\subset \T$
 with $\frac{1}{2q^2}<|J|<\frac{2}{q^2}$ and any $\gamma_1,\gamma_2\in (0,1)$ with $\gamma_2$ satisfying 
\begin{equation}\label{eq:q2}
\|r\gamma_2\|\geq \frac{(\log N)^B}{H^2}\;\text{ for every }\;0<r\leq (\log N)^B
\end{equation}
the following holds:
$$
\frac{1}{H}\sum_{p\in [N,N+H]}\chi_{I\times J}(\gamma_1(p-N),\gamma_2(p-N)^2)\log p= $$$$
\lambda(J)\Big[\frac{1}{H}\sum_{p\in [N,N+H]}\chi_I(\gamma_1(p-N))\log p\Big] +{\rm O}(q^{-6}).
$$
\end{proposition}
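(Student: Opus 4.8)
I will assume $q\ge 2$; the case $q=1$ is trivial since then $v=1$, $I=\T$, $\chi_I\equiv 1$, and both sides of the asserted identity are ${\rm O}(1)$ in any case. Since $\int_{\T}\chi_J\,d\lambda=\lambda(J)$, it suffices to prove
$$
\Delta_I:=\frac1H\sum_{p\in[N,N+H]}\chi_I(\gamma_1(p-N))\bigl(\chi_J(\gamma_2(p-N)^2)-\lambda(J)\bigr)\log p={\rm O}(q^{-6}),
$$
because $\lambda(J)\cdot\frac1H\sum_p\chi_I(\gamma_1(p-N))\log p$ is exactly the right-hand side. The plan is to treat the two cut-offs asymmetrically: I will smooth $\chi_J$ by Beurling-Selberg trigonometric polynomials and invoke Proposition \ref{prop:n1} for the resulting quadratic exponential sums over primes, whereas the cut-off $\chi_I$ — which interacts only with the \emph{linear} phase $\gamma_1(p-N)$, on which we have no information and for which no cancellation is available at the scale $H<N^{2/3}$ — will be handled by combining the continued-fraction structure of $\gamma_1$ with the freedom to choose the partition $\{I_i\}$.

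\textbf{Step 1: smoothing and Proposition \ref{prop:n1}.} Fix $K:=\lceil q^{8}\rceil$ and let $S_J^{\pm}$ be majorant/minorant trigonometric polynomials of degree $\le K$ for $\chi_J$, with $S_J^-\le\chi_J\le S_J^+$, $\widehat{S_J^{\pm}}(0)=\lambda(J)+{\rm O}(1/K)$ and $|\widehat{S_J^{\pm}}(\ell)|\le\tfrac1{K+1}+\min(\lambda(J),\tfrac1{\pi|\ell|})$, so that $\sum_{0<|\ell|\le K}|\widehat{S_J^{\pm}}(\ell)|\ll\log q$ (using $\lambda(J)\asymp q^{-2}$). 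Sandwiching $\chi_J-\lambda(J)$ between $S_J^{\pm}-\lambda(J)$ is legitimate because $\chi_I\ge 0$; expanding into Fourier modes, the zero mode reproduces $\lambda(J)\cdot\frac1H\sum_p\chi_I(\gamma_1(p-N))\log p$ up to an error ${\rm O}(1/K)\cdot\frac1H\sum_{p\in[N,N+H]}\log p={\rm O}(q^{-8})$ (the last bound by Lemma \ref{lem:fff}), and one is reduced to estimating, for $0<|\ell|\le K$,
$$
R_{I,\ell}:=\frac1H\sum_{p\in[N,N+H]}\chi_I(\gamma_1(p-N))\,e\bigl(\ell\gamma_2(p-N)^2\bigr)\log p.
$$
For this I would also introduce Beurling-Selberg polynomials $S_I^{\pm}$ of degree $\le K$ for $\chi_I$. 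Writing $\chi_I$ in terms of $S_I^{\pm}$, the remainder (the part with $\chi_I$ replaced by $S_I^+-\chi_I$ or $\chi_I-S_I^-$) is bounded in modulus by the \emph{boundary term}
$$
E_I:=\frac1H\sum_{p\in[N,N+H]}\bigl(S_I^{+}-S_I^{-}\bigr)(\gamma_1(p-N))\log p,
$$
while the polynomial part equals $\sum_{|k|\le K}\widehat{S_I^{\pm}}(k)\cdot\frac1H\sum_p e\bigl(k\gamma_1(p-N)+\ell\gamma_2(p-N)^2\bigr)\log p$. Each inner sum here is ${\rm O}(\eta^{2/3}H)$ by Proposition \ref{prop:n1}, whose minor-arc hypothesis is supplied \emph{entirely} by the quadratic coefficient $\ell\gamma_2$ via \eqref{eq:q2}: for $|\ell|\le K$ and $r\le(\log N)^{B'}$ one has $r|\ell|\le(\log N)^{B}$ once $B=B(q)$ is taken larger than the exponent $B'=B'(\eta)$ of Proposition \ref{prop:n1} and $N\ge H_0(q)$ is large, so that $\|r\ell\gamma_2\|=\|(r|\ell|)\gamma_2\|\ge(\log N)^{B}/H^2\ge(\log N)^{B'}/H^2$ for every admissible $r$, i.e. the hypothesis holds with $i=2$ for all $k$. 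This is precisely where it is crucial that $H$ may be taken below $N^{2/3}$. Since $\sum_{|k|\le K}|\widehat{S_I^{\pm}}(k)|\ll\log q$, the polynomial part of $R_{I,\ell}$ is ${\rm O}(\eta^{2/3}\log q)$; choosing $\eta=\eta(q)$ small enough in terms of $q$ — which fixes $\theta(q):=\eta(q)/2$ and $B(q)$ — these contributions, summed against the $\widehat{S_J^{\pm}}(\ell)$, are ${\rm O}(q^{-6})$ altogether. Thus $\Delta_I={\rm O}(q^{-6})+{\rm O}(E_I\log q)$.

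\textbf{Step 2: the boundary term and the choice of the partition.} It remains to arrange $E_I\ll q^{-7}$ for every cell $I=I_i$ and every $\gamma_1$. Since $S_I^{+}-S_I^{-}$ is, up to an additive ${\rm O}(1/K)$, a pair of Fej\'er-type bumps of width $\asymp 1/K$ centred at the two endpoints of $I$, $E_I$ is essentially the $\log$-weighted proportion of $p\in[N,N+H]$ with $\gamma_1(p-N)$ within ${\rm O}(1/K)$ of $\partial I$. I would choose the equal-length partition (equivalently, its offset $t\in\T$, with $v\in[\tfrac12 q^{2},2q^{2}]$) by a pigeonhole argument: the $\log$-weighted distribution of $\gamma_1(p-N)$ on $\T$ can carry mass $\gg q^{-7}$ near a point only when $\gamma_1$ is within ${\rm O}(1/H)$ of a rational of denominator at most $q^{C}$ (for a suitable absolute constant $C$) — otherwise, by the three-distance theorem together with Lemma \ref{lem:fff} (which forbids the primes of $[N,N+H]$ from concentrating in any interval of length $\ge N^{1/10}$), the orbit $\{\gamma_1(p-N)\}$ is too spread out — and the set of offsets $t$ for which some $\partial I_i$ lands within $1/K$ of such a rational has measure $<1$. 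For the resulting partition and a given $\gamma_1$ I would argue as follows. If $I_i$ contains the (essential) support of the distribution of $\gamma_1(p-N)$ in its interior, then $\chi_{I_i}$ is constant on that support and $\Delta_{I_i}$ is $\lambda(J)P_{I_i}$ plus a genuine full- or long-subinterval quadratic sum handled as in Step 1. If $\chi_{I_i}$ splits that support, the split point corresponds to restricting $p$ to a subinterval $[N,N+L]\subset[N,N+H]$, and I case on $L$: for $L<q^{-6}H$ the trivial bound (Lemma \ref{lem:fff}) gives $|\Delta_{I_i}|\ll L/H\ll q^{-6}$, while for $L\ge q^{-6}H\ge N^{2/3-\eta(q)}$ — here $\theta(q)=\eta(q)/2$ and the size of $H_0(q)$ are used — Proposition \ref{prop:n1} applies on $[N,N+L]$. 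Finally, when $\gamma_1$ is within ${\rm O}(1/H)$ of $a/b$ with $b\le q^{C}$, I first restrict $p$ to the residue class modulo $b$ carrying the relevant atom — on which $\chi_{I_i}$ is again constant — and apply an arithmetic-progression version of Proposition \ref{prop:n1}, obtained after the substitution $p-N=bm+r_0$, which turns the phase into a quadratic polynomial in $m$ on the interval $[0,H/b]$, whose length is still $\ge N^{2/3-\eta(q)}$.

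Putting it together, for cells with $P_{I_i}:=\frac1H\sum_p\chi_{I_i}(\gamma_1(p-N))\log p\le q^{-6}$ one uses $|\Delta_{I_i}|\le 2P_{I_i}$, and for the remaining cells the estimates above; all of $H_0(q)$, $B(q)$ and $\theta(q)$ are determined by $q$ through the choices of $K\asymp q^{8}$ and $\eta(q)$. \textbf{The main obstacle} is Step 2: controlling $\chi_I$ against the linear phase $\gamma_1(p-N)$ uniformly in $\gamma_1$. Because $H$ can lie below $N^{2/3}$, the pure linear sum $\sum_{p\in[N,N+H]}e(k\gamma_1(p-N))\log p$ enjoys no cancellation when $\gamma_1$ is near a rational of small or moderate denominator, so the distribution of $\gamma_1(p-N)$ genuinely can concentrate and must be \emph{dodged} by the geometry of the partition rather than estimated away; the intermediate regime — $\gamma_1$ near a rational of denominator comparable to a power of $q$ — is the delicate case and is what forces both the pigeonhole choice of $\{I_i\}$ and the passage to arithmetic progressions.
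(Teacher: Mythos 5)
Your Step 1 is essentially the paper's Step 1: majorize/minorize $\chi_I,\chi_J$ by trigonometric polynomials of degree depending only on $q$, expand, and apply Proposition~\ref{prop:n1} to each nonzero-$J$-frequency term, observing — correctly, and this is the key point — that the minor-arc hypothesis of Proposition~\ref{prop:n1} is supplied \emph{entirely} by the quadratic phase via $i=2$ and~\eqref{eq:q2}, so no condition on $\gamma_1$ is needed. (The paper uses a two-step approximation of $\chi_K$ by a continuous bump $f^\pm_K$ and then a trigonometric polynomial, rather than Beurling--Selberg directly, but this is cosmetic. Your $K=\lceil q^8\rceil$ and free parameter $\eta=\eta(q)$ correspond to the paper's $m(q)$ and $\eta=m(q)^{-24}$.)

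Your Step 2, however, diverges from the paper and is where you run into trouble. The paper does something considerably simpler: having shown the Step-1 estimate \emph{for every interval $I$} (with $f^\pm_I$ in place of $\chi_I$), it observes that the residual error in replacing $f^\pm_I$ by $\chi_I$ is controlled by the $\log p$-weighted mass that $\gamma_1(p-N)$ puts in the $q^{-9}$-neighbourhoods of $\partial I_i$. It then tiles each cell $[\tfrac{j}{q^2},\tfrac{j+1}{q^2})$ by $\asymp q^{7}$ disjoint boundary windows of width $2q^{-9}$, notes that the collections for distinct offsets $\ell\le q^7/2$ partition $\T$, and pigeonholes: since the total number of primes in $[N,N+H]$ is $\asymp H/\log H$ (Lemma~\ref{lem:shit}), some offset $\ell_0$ carries $\ll q^{-7}H/\log H$ of them, hence boundary contribution $\ll q^{-7}H$. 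This requires \emph{no structural information about $\gamma_1$ whatsoever}, because the pigeonhole is performed \emph{after} $\gamma_1$ is fixed; the resulting partition $\{I_i\}$ depends on $\gamma_1$, which suffices for the application in Section~\ref{sec:main2} (there $\gamma_1,\gamma_2$ are determined by $N$ anyway).

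You instead try to choose the partition independently of $\gamma_1$, forcing you into a case analysis on the diophantine type of $\gamma_1$. This is where the gaps are. First, the claimed dichotomy — that the $\log$-weighted distribution of $\gamma_1(p-N)$ can carry mass $\gg q^{-7}$ near a point only when $\gamma_1$ is within ${\rm O}(1/H)$ of a rational of denominator $\le q^C$ — is plausible but nowhere close to proven; the three-distance theorem controls gap sizes in $\{\gamma_1 n\}_{n\le H}$, not the prime-weighted measure, and Lemma~\ref{lem:fff} (Brun--Titchmarsh) only handles single intervals of length $\ge N^{1/10}$, whereas the preimage of a $q^{-9}$-window under $n\mapsto\gamma_1 n\bmod 1$ is a union of many short pieces whose aggregate prime content is exactly what you need to bound and cannot be dispatched piece by piece in general. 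Second, the reduction "if $\chi_{I_i}$ splits the support, the split corresponds to restricting $p$ to a subinterval $[N,N+L]$" is simply false when $\gamma_1 H\ge 1$: the set $\{p:\gamma_1(p-N)\bmod 1\in I_i\}$ is then a union of many intervals, not one. Third, you invoke an "arithmetic-progression version of Proposition~\ref{prop:n1}" that is not in the paper and would require a separate proof (the substitution $p-N=bm+r_0$ does not turn a sum over primes $p$ into a sum over primes $m$, so Proposition~\ref{prop:n1} does not apply directly to it). In short: your Step~2 attempts a harder statement (uniformity in $\gamma_1$ of the choice of partition) than the paper actually establishes or needs, and the argument you sketch for it does not close.
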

\begin{proof}Let $d$ denote the metric on $\T$. For $A\subset \T$, let $V_\delta(A):=\{x\in \T\;:\; d(A,x)<\delta\}$ be the $\delta$ neighborhood of $A$. Let $I,J\subset \T$ be intervals with $\frac{1}{2q^2}<|I|,|J|<\frac{2}{q^2}$.
Let $f^-_I,f^+_I,f^-_J,f^+_J\in C(\T)$ be positive functions bounded above by $1$ such that for $K\in \{I,J\}$,  $f^-_K\leq \chi_{K}\leq f^+_K$ and moreover,
$$
f^{-}_K(x)=\chi_K(x), \text{ for } x\in \T\setminus V_{q^{-9}}(K^c),
$$
and
$f^-_K(x)=0$ for $x\notin K$. Similarly, 
$$
f^{+}_K(x)=\chi_K(x), \text{ for } x\in K,
$$
and
$f^+_K(x)=0$ for $x\notin V_{q^{-9}}(K)$. For $i\in \{-,+\}$, let $f^i(x,y):=f^i_I(x)\cdot f^i_J(y)$. 
Then 
$$
\sum_{p\in [N,N+H]}\chi_{I\times J}(\gamma_1(p-N),\gamma_2(p-N)^2)\log p\geq \sum_{p\in [N,N+H]}f^-(\gamma_1(p-N),\gamma_2(p-N)^2)\log p
$$
and 
$$
\sum_{p\in [N,N+H]}\chi_{I\times J}(\gamma_1(p-N),\gamma_2(p-N)^2)\log p\leq \sum_{p\in [N,N+H]}f^+(\gamma_1(p-N),\gamma_2(p-N)^2)\log p.
$$

Let $i\in \{-,+\}$ and $K\in \{I,J\}$. Since  $f^i_K\in C(\T)$, there exists $m(q)>q$ such that for every $x\in \T$,
\begin{equation}\label{eq:n1}
\Big|f_K^i(x)-\sum_{|a_{i,K}|<m(q)}c_{a_{i,K}}e_{a_{i,K}}(x)\Big|<q^{-9}.
\end{equation}
Therefore, 
$$
\sum_{p\in [N,N+H]}f^{i}(\gamma_1(p-N),\gamma_2(p-N)^2)\log p=
$$$$
\sum_{p\in [N,N+H]}\Big[\Big(\sum_{|a_{i,I}|<m(q)}c_{a_{i,I}}e_{a_{i,I}}(\gamma_1(p-N))\Big)\cdot \Big(\sum_{|a_{i,J}|<m(q)}c_{a_{i,J}}e_{a_{i,J}}(\gamma_2(p-N)^2)\Big)\log p\Big]+$$$$
{\rm O}\Big(q^{-9}\sum_{p\in [N,N+H]}\log p\Big)
$$
We split the first term according to the value of $a_{i,J}$:
\begin{equation}\label{eq:split}
\sum_{|a_{i,J}|<m(q)}\sum_{|a_{i,I}|<m(q)}\sum_{p\in [N,N+H]}c_{a_{i,I}}c_{a_{i,J}}e\Big(a_{i,J}\gamma_2(p-N)^2+a_{i,I}\gamma_1(p-N)\Big)\log p.
\end{equation}

We use Proposition \ref{prop:n1} with $\eta:=m(q)^{-24}>0$. It implies that if $H\geq H'(q)$, $N\geq H\geq N^{2/3-\eta}$ and $a_{i,J}\neq 0$ satisfies 
\begin{equation}\label{eq:n2}
\|ra_{i,J}\gamma_2\|\geq \frac{(\log N)^{B'}}{H^2}\text{ for every } 0<r\leq (\log N)^{B'},
\end{equation}
 then 
 $$
 \Big|\sum_{p\in [N,N+H]}e(a_{i,J}\gamma_2(p-N)^2+a_{i,I}\gamma_1(p-N))\log p\Big|\ll m(q)^{-16} H.
$$
So if every $0\neq |a_{i,J}|<m(q)$ satisfies \eqref{eq:n2}, then \eqref{eq:split} is, by summing the above, equal to 
$$
\lambda(J)\sum_{|a_{i,I}|<m(q)}\sum_{p\in [N,N+H]}c_{a_{i,I}}e_{a_{i,I}}(\gamma_1(p-N))\log p+{\rm O}(m(q)^{-14}H)=
$$$$
\lambda(J)\sum_{p\in [N,N+H]}f^i_I(\gamma_1(p-N))+{\rm O}\Big(q^{-9}\sum_{p\in [N,N+H]}\log p\Big).
$$
where in the last equality we used \eqref{eq:n1} and $m(q)>q$. Notice moreover, that since $2/3-\eta>7/12+1/20$, it follows by Lemma \ref{lem:shit} that the last term above is equal to ${\rm O}(q^{-9}H)$.
If we define $\theta(q)=\eta$, $H_0(q):=\max(e^{m(q)^{100}},H'(q))$ and $B(q)=B'+1$, then \eqref{eq:q2} implies that  \eqref{eq:n2} holds for every $0\neq |a_{i,J}|<m(q)$. Hence the above shows that
$$
\sum_{p\in [N,N+H]}f^{i}(\gamma_1(p-N),\gamma_2(p-N)^2)\log p=
\lambda(J)\sum_{p\in [N,N+H]}f^i_I(\gamma_1(p-N))\log p+{\rm O}(q^{-9}H).
$$
Note also that the above holds for {\bf every} $I\subset \T$. We will now use pigeonhole principle to show that there exists a collection of disjoint intervals $\{I_i\}_{i=1}^v$ of equal length $\in[\frac{1}{2q^2},\frac{2}{q^2}]$ such that 
$$
\Big|\sum_{p\in [N,N+H]}f^+_{I_i}(\gamma_1(p-N))\log p-\sum_{p\in [N,N+H]}f^-_{I_i}(\gamma_1(p-N))\log p\Big|={\rm O}(q^{-6}H).
$$
Notice that since $f^+_I\geq \chi_I\geq f^-_I$, the above statement immediately implies the proposition. Let $I=[a_i,b_i)$. Then the above difference is bounded above by
\begin{equation}\label{eq:aia}
\sum_{p\in [N,N+H]}\Big[\chi_{V_{q^{-9}}(a_i)}(\gamma_1(p-N))+\chi_{V_{q^{-9}}(b_i)}(\gamma_1(p-N))\Big]\log p.
\end{equation}
 For any $0\leq \ell\leq \frac{q^9}{2q^2}=q^7/2$, we now consider a collection of intervals 
 $$
 \Big\{\Big[\frac{j}{q^2}+\ell 2q^{-9},\frac{j}{q^2}+(\ell+1) 2q^{-9}\Big)\Big\}_{0\leq j\leq q^2-1}.
$$ 
  Notice that for every $\ell\neq \ell'$ any two such collections consist of pairwise disjoint intervals and the union over all $\ell\leq q^7/2$ covers $\T$. So by pigeonhole principle there exists $\ell_0\leq q^7/2$ such that (using also Lemma \ref{lem:shit} to count the number of primes in $[N,N+H]$)
\begin{multline}\label{eq:fun} 
\Big|\{p\in [N,N+H]\;:\; \gamma_1(p-N)\in \bigcup_{j\leq q^2-1}\Big[\frac{j}{q^2}+\ell_0 2q^{-9},\frac{j}{q^2}+(\ell_0+1) 2q^{-9}\Big)\}\Big|\leq\\ 2 q^2 2 q^{-9}H(\log H)^{-1}=4 q^{-7}H(\log H)^{-1}.
\end{multline}
Consider the midpoints $\{c_j\}_{j\leq q^2-1}$ of the intervals $\Big[\frac{j}{q^2}+\ell_0 2q^{-9},\frac{j}{q^2}+(\ell_0+1) 2q^{-9}\Big)$. They partition $\T$. Let now $\{I_j\}_{j\leq q^2-1}$ be the collection of intervals given by this partition, i.e. $I_j=[c_j,c_{j+1})$. Note that by definition, $|I_j|\in (\frac{1}{2q^2},\frac{2}{q^2})$. Moreover the intervals are pairwise disjoint and cover $\T$. It remains to notice that by \eqref{eq:fun}, \eqref{eq:aia} is bounded by
$$
\log N\cdot 4q^{-7}H(\log H)^{-1}={\rm O}(q^{-7}H),
$$
as $H\geq N^{2/3-\theta(q)}>N^{1/2}$.
 This finishes the proof.

\end{proof}
\subsection{Definition of the $G_\delta$ dense set.}
In this section we define the $G_{\delta}$ dense set of irrationals for which we will show Theorem \ref{thm:main2}. Roughly speaking, we will require that along a subsequence of denominators, we have that $q_{n+1}$ is much larger than $q_n$, so that we can apply  Proposition \ref{prop:cruc} with $N=q_{n+1}^{1/2}$ and $q=q_n$.

Let $\alpha\in \T$ and let $(a_i)_{i\in \N}$ denote the continued fraction expansion of $\alpha$. Let moreover $(q_i)_{i\in \N}$ be the sequence of denominators of $\alpha$, i.e. the sequence given by $q_0=q_1=1$ and 
$$
q_{n+1}=a_nq_n+q_{n-1}.
$$
We say that $\alpha \in \cD$ if there exists a subsequence $(n_k)$ such that for every $k\in \N$, we have 
\begin{equation}\label{eq:cD}
q_{n_k+1}\cdot [\log (q_{n_k+1})]^{-\frac{100}{\theta(q_{n_k})}}\geq \max\Big(e^{q_{n_k}}, \Big(H_0(q_{n_k})\Big)^2,q_{n_k}^{\frac{100}{\theta(q_{n_k})}}\Big),
\end{equation}
where $H_0(q_{n_k})$ and $\theta(q_{n_k})$ are given by Proposition \ref{prop:cruc}. The above condition expresses the fact that $q_{n_k+1}$ is large enough to guarantee that we can apply Proposition \ref{prop:cruc}. It follows (see eg. \cite{LKW}) that for any fixed 'rate' the set of $\alpha$ for which a subsequence of denominators grows with this rate is a $G_\delta$ dense set. We therefore have:
\begin{lemma}The set $\cD$ is a $G_\delta$ dense set.
\end{lemma}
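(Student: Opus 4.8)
The plan is to exhibit $\cD$ as a countable intersection of open dense subsets of $\T$, working with the continued fraction coding of irrationals. For each $\alpha \in \T \setminus \Q$ write its continued fraction $[a_0;a_1,a_2,\dots]$ and denominators $(q_i)$. The condition \eqref{eq:cD} defining membership in $\cD$ is: there exists a subsequence $(n_k)$ such that for all $k$,
$$
q_{n_k+1}\cdot [\log(q_{n_k+1})]^{-100/\theta(q_{n_k})} \geq \max\Big(e^{q_{n_k}},\, H_0(q_{n_k})^2,\, q_{n_k}^{100/\theta(q_{n_k})}\Big).
$$
Call an index $n$ \emph{$\alpha$-good} if $q_{n+1}$ (equivalently $a_n$, since $q_{n+1}=a_nq_n+q_{n-1}$ and $q_n,q_{n-1}$ are already determined) is large enough that this inequality holds. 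Then $\alpha \in \cD$ iff $\alpha$ has infinitely many $\alpha$-good indices. For $m \in \N$ let
$$
U_m := \{\alpha \in \T\setminus\Q : \alpha \text{ has at least one $\alpha$-good index } n \geq m\}.
$$
Then $\cD = \bigcap_{m\in\N} U_m$, so it suffices to show each $U_m$ is open and dense.

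\emph{Openness.} First I would observe that whether $n$ is $\alpha$-good depends only on the initial block $(a_0,\dots,a_n)$ of partial quotients: indeed $q_n,q_{n-1}$ depend only on $(a_0,\dots,a_{n-1})$, the right-hand side of the inequality is then a fixed finite number $M(q_{n-1},q_n)$, and the left-hand side $q_{n+1}[\log q_{n+1}]^{-100/\theta(q_n)}$ is an explicit (eventually increasing) function of $a_n$ alone once $q_{n-1},q_n$ are fixed. So "$n$ is $\alpha$-good" is determined by a finite cylinder in continued-fraction space, and the set of $\alpha$ sharing a given finite partial-quotient block $(a_0,\dots,a_n)$ is open in $\T$ (it is an open interval, the cylinder set of the Gauss map). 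Hence $U_m$, being a union over $m \le n$ and over admissible finite blocks $(a_0,\dots,a_n)$ making $n$ good of such cylinders, is a union of open intervals, so open.

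\emph{Density.} Given any $\alpha_0 \in \T$ and $\varepsilon>0$, I want $\beta \in U_m$ with $|\beta-\alpha_0|<\varepsilon$. Approximate $\alpha_0$ by a rational with continued fraction $[a_0;a_1,\dots,a_r]$ for $r$ large enough that this rational is within $\varepsilon/2$ of $\alpha_0$; we may also take $r \geq m$. Now the denominators $q_0,\dots,q_{r}$ are fixed finite numbers; choose $a_{r}$ (if we are adjusting) — more cleanly, append one more partial quotient $a_{r+1} := a$ with $a$ an enormous integer, so large that with $q_{r+2} = a\,q_{r+1} + q_{r}$ the defining inequality \eqref{eq:cD} holds at index $n = r+1$ (the right-hand side $M(q_r,q_{r+1})$ is now a fixed number, and the left-hand side tends to infinity with $a$). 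Then continue the continued fraction arbitrarily (e.g. all $1$'s) to get an irrational $\beta = [a_0;\dots,a_{r+1},1,1,1,\dots]$. Since $\beta$ agrees with $\alpha_0$ in its first $r+1$ partial quotients and $r \geq m$... actually more carefully: $\beta$ agrees with the rational truncation of $\alpha_0$ to depth $r$, hence $|\beta - \alpha_0| \le |\beta - [a_0;\dots,a_r]| + \varepsilon/2 < \varepsilon$ provided we also took $r$ large (so that the tail cylinder at depth $r$ has length $< \varepsilon/2$; this uses $q_r^{-2} \to 0$). And $\beta$ has an $\alpha$-good index $n = r+1 \geq m$ by construction, so $\beta \in U_m$. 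This proves density.

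\emph{Conclusion and main obstacle.} By the Baire category theorem $\cD = \bigcap_m U_m$ is a dense $G_\delta$. The only genuinely delicate point — and it is a bookkeeping point rather than a conceptual one — is checking that for fixed $q_{n-1}, q_n$ the left-hand side $q_{n+1}[\log q_{n+1}]^{-100/\theta(q_n)}$ is genuinely unbounded and eventually monotone as $a_n \to \infty$; this is clear since $q_{n+1} \ge a_n q_n$ grows linearly in $a_n$ while the logarithmic correction grows only polylogarithmically, so the product $\to \infty$. Once that is granted, every step reduces to the standard fact (cited in the excerpt as in \cite{LKW}) that prescribing a growth rate along a subsequence of denominators carves out a dense $G_\delta$; the argument above is just the explicit instance of that fact for the particular rate in \eqref{eq:cD}.
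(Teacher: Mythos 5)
Your proof is correct and simply spells out the standard cylinder-set argument that the paper delegates to the citation \cite{LKW}: you decompose $\cD=\bigcap_m U_m$ where each $U_m$ is a countable union of continued-fraction cylinders (open in $\T\setminus\Q$, itself a dense $G_\delta$ in $\T$), and you establish density by inserting one enormous partial quotient at a sufficiently deep index. This is the intended reasoning, and your treatment of the topology on $\T\setminus\Q$ versus $\T$ (so that the Baire argument and the $G_\delta$-in-a-$G_\delta$ transfer go through) is handled correctly.
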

\subsection{Ergodic sums estimates}
Let $\alpha\in \T$ with the sequence of denominators $(q_n)_{n\in \N}$. We first recall the following:

\begin{lemma}\label{lem:ase} Let $g\in {\rm BV}(\T)$. Then
$$
\sup_{x\in \T} \Big|S_{M}(g)(x)-M\int_\T g\,d{\rm Leb}\Big|=o(M).
$$
\end{lemma}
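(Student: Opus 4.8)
The plan is to derive the bound from unique ergodicity of the rotation $x\mapsto x+\alpha$ on $\T$, by sandwiching $g$ between continuous functions. Since $g\in{\rm BV}(\T)$ it is bounded and has only countably many discontinuities, hence is Riemann integrable; consequently for every $\varepsilon>0$ there exist $g^-_\varepsilon,g^+_\varepsilon\in C(\T)$ with $g^-_\varepsilon\le g\le g^+_\varepsilon$ pointwise and $\int_\T(g^+_\varepsilon-g^-_\varepsilon)\,d{\rm Leb}<\varepsilon$. In particular $\big|\int_\T g^{\pm}_\varepsilon\,d{\rm Leb}-\int_\T g\,d{\rm Leb}\big|<\varepsilon$.

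Next I would use that the rotation by $\alpha$ is uniquely ergodic (being an irrational rotation), which by the standard characterisation of unique ergodicity means that for every $\phi\in C(\T)$ the Birkhoff sums $\frac1M S_M(\phi)(x)$ converge to $\int_\T\phi\,d{\rm Leb}$ uniformly in $x\in\T$. Fixing $\varepsilon>0$ and applying this to $\phi=g^+_\varepsilon$ and $\phi=g^-_\varepsilon$, there is $M_0(\varepsilon)$ such that $\sup_{x\in\T}\big|S_M(\phi)(x)-M\int_\T\phi\,d{\rm Leb}\big|\le\varepsilon M$ for all $M\ge M_0(\varepsilon)$ and $\phi\in\{g^-_\varepsilon,g^+_\varepsilon\}$. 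Combining this with the pointwise monotonicity $S_M(g^-_\varepsilon)(x)\le S_M(g)(x)\le S_M(g^+_\varepsilon)(x)$ and with the estimates on $\int_\T g^{\pm}_\varepsilon$, one gets for every $x\in\T$ and every $M\ge M_0(\varepsilon)$ that
$$
M\int_\T g\,d{\rm Leb}-2\varepsilon M \le S_M(g)(x) \le M\int_\T g\,d{\rm Leb}+2\varepsilon M,
$$
that is $\sup_{x\in\T}\big|S_M(g)(x)-M\int_\T g\,d{\rm Leb}\big|\le 2\varepsilon M$. Letting $\varepsilon\to0$ gives the claimed $o(M)$ bound.

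I do not expect a genuine obstacle here; the only point worth a line is the existence of the continuous sandwiching functions $g^{\pm}_\varepsilon$. This is the usual characterisation of Riemann integrability: one first sandwiches $g$ between step functions with $L^1$-gap less than $\varepsilon/2$ (using that the discontinuity set of a ${\rm BV}$ function is Lebesgue null), and then replaces the step functions by nearby continuous functions that still satisfy the one-sided inequalities. Equivalently one may write $g=h_1-h_2$ as a difference of monotone functions and mollify each $h_j$ from the side that preserves the inequality. If a quantitative version along the denominators $(q_n)$ were wanted one could instead invoke the Denjoy--Koksma inequality $\big|S_{q_n}(g)(x)-q_n\int_\T g\,d{\rm Leb}\big|\le{\rm Var}(g)$ together with the Ostrowski expansion of $M$, but for the qualitative statement here the unique ergodicity argument is the most economical.
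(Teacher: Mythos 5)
Your proof is correct, but it takes a genuinely different route from the paper's. The paper invokes the Denjoy--Koksma inequality $\big|S_{q_n}(g)(x)-q_n\int_\T g\,d{\rm Leb}\big|=\mathrm{O}(\mathrm{Var}(g))$ and then decomposes an arbitrary $M$ via its Ostrowski expansion $M=\sum_{k\leq n}b_kq_k$, obtaining $\big|S_M(g)(x)-M\int_\T g\big|=\mathrm{O}(\mathrm{Var}(g))\sum b_k$ and finally checking $\sum b_k=o(M)$. You instead use the soft route: unique ergodicity of the irrational rotation gives uniform convergence of Birkhoff averages for continuous test functions, and you transfer this to $g$ by sandwiching it between continuous $g^{\pm}_\varepsilon$, which is possible because a ${\rm BV}$ function on $\T$ is Riemann integrable. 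You even flag the paper's route yourself in your last sentence. The trade-off is the expected one: the paper's argument is elementary and quantitative (it expresses the error explicitly in terms of $\mathrm{Var}(g)$ and the Ostrowski digits, and in particular specializes along $q_n$ to the clean Denjoy--Koksma bound used elsewhere in the paper), whereas your argument is non-quantitative but more general, requiring only Riemann integrability of $g$ and unique ergodicity of the underlying map rather than the specific arithmetic of continued fractions. For the qualitative $o(M)$ claim, both are perfectly adequate and standard.
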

\begin{proof}The proof is classical and is a consequence of the
Denjoy-Koksma inequality:
$$
\Big|S_{q_n}(g)(x)-q_n\int_\T g\,d{\rm Leb}\Big|={\rm O}(Var(g));
$$
and the Ostrovski expansion, i.e. we write $M=\sum_{s\leq k} b_kq_k$, $b_k\leq \frac{q_{k+1}}{q_k}$ and use cocycle identity to write 
$$
|S_{M}(f)(x)-M\int_\T fd{\rm Leb}|={\rm O}({\rm Var}(g))\cdot\sum_{s\leq k}b_k.
$$
It remains to notice that 
$$
 \sum_{s\leq k}b_k=o(M).
$$
\end{proof}

 Assume now that $f\in C^2(\T\setminus\{0\})$ has power singularity with exponent $\gamma$. For $x\in \T$ and $n\in \N$ let $x_{n,min}:=\min_{0\leq i\leq n}\|x+i\alpha\|$.  We have  the following lemma (see Lemma 4.1 and Sublemma 1 in \cite{FFK}):
\begin{lemma}\label{lem:ta} There exists $C'>0$ such that for every  $n\in \N$ and $x\in \T$, we have 
\be\label{eq:f}
 \Big|S_{q_n}(f)(x)-q_n\int_\T f \,d\,{\rm Leb}\Big|\leq C'x_{q_n,min}^\gamma,
\ee
\be\label{eq:f'}
|S_{q_n}(f')(x)-f'(x_{q_n,min})|\leq C'q_n^{-1+\gamma}
\ee
and 
\be\label{eq:f''}
|S_{q_n}(f'')(x)-f''(x_{q_n,min})|\leq C'q^{-2+\gamma}_n.
\ee
\end{lemma}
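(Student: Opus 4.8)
The plan is to reduce all three estimates to the single-singularity analysis of $f$ near $0$, exploiting the Denjoy--Koksma inequality to handle the "bulk" of the orbit $\{x+i\alpha\}_{0\le i<q_n}$ and isolating the one index $i_0$ at which $\|x+i_0\alpha\|=x_{q_n,min}$ is achieved. First I would recall that by the three-distance theorem the points $x,x+\alpha,\dots,x+(q_n-1)\alpha$ are $q_n$-separated on $\T$ up to a bounded factor: each of them is at distance $\gg 1/q_n$ from $0$ except possibly the single closest one, $x+i_0\alpha$. Write $f=f_{\mathrm{sing}}+f_{\mathrm{reg}}$ where $f_{\mathrm{reg}}\in \mathrm{BV}(\T)$ (in fact $C^2$) captures $f$ away from a fixed neighbourhood of $0$ and $f_{\mathrm{sing}}$ is supported near $0$ and, by \eqref{eq:asu}, behaves like $A_0\|x\|^{\gamma}$ (resp.\ $B_0\|x\|^\gamma$) on the two sides. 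For $f_{\mathrm{reg}}$ and its derivatives the Denjoy--Koksma inequality (as in Lemma \ref{lem:ase}) gives $|S_{q_n}(f_{\mathrm{reg}})(x)-q_n\int f_{\mathrm{reg}}|={\rm O}(\mathrm{Var}(f_{\mathrm{reg}}))={\rm O}(1)$, which is absorbed into the claimed bounds.

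It remains to treat $S_{q_n}(f_{\mathrm{sing}})(x)=\sum_{0\le i<q_n}f_{\mathrm{sing}}(x+i\alpha)$. Here I would split off the closest return $i_0$: the term $f_{\mathrm{sing}}(x+i_0\alpha)$ is, up to ${\rm O}(1)$, comparable to $x_{q_n,min}^{\gamma}$, giving the leading contribution in \eqref{eq:f}; and for $f'$, $f''$ the corresponding term is $f'(x_{q_n,min})$, resp.\ $f''(x_{q_n,min})$, which is exactly what appears on the left of \eqref{eq:f'} and \eqref{eq:f''}. For the remaining indices $i\ne i_0$, using the $\gg 1/q_n$ separation and the power-law bounds $|f_{\mathrm{sing}}(t)|\ll \|t\|^\gamma$, $|f_{\mathrm{sing}}'(t)|\ll \|t\|^{-1+\gamma}$, $|f_{\mathrm{sing}}''(t)|\ll \|t\|^{-2+\gamma}$ from \eqref{eq:asu}, one compares $\sum_{i\ne i_0}f_{\mathrm{sing}}^{(j)}(x+i\alpha)$ against the integral $q_n\int_\T f_{\mathrm{sing}}^{(j)}$. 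Because the points are roughly equidistributed at scale $1/q_n$, this sum is (up to the Denjoy--Koksma-type error and boundary effects near $i_0$) of size ${\rm O}\big(q_n\cdot q_n^{-1}\cdot (1/q_n)^{\gamma-1}\big)={\rm O}(q_n^{-\gamma})$ for $j=0$... wait, more carefully: for $j=0$ the tail $\sum_{k\ge 1}(k/q_n)^\gamma\cdot$(number of points at scale $k/q_n$) converges to $q_n\int f_{\mathrm{sing}}$ with error ${\rm O}(q_n^{-\gamma})$ dominated by $x_{q_n,min}^\gamma$ only when $x_{q_n,min}\ll 1/q_n$, so one also must observe $x_{q_n,min}^\gamma \gg q_n^{-\gamma}$ always, closing \eqref{eq:f}; for $j=1,2$ the tails $\sum_k (k/q_n)^{-1+\gamma}$, $\sum_k(k/q_n)^{-2+\gamma}$ are dominated by their first term $\asymp q_n^{1-\gamma}$, $q_n^{2-\gamma}$, and dividing by the $q_n$ points gives the ${\rm O}(q_n^{-1+\gamma})$ and ${\rm O}(q_n^{-2+\gamma})$ errors after subtracting $f^{(j)}(x_{q_n,min})$.

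The main obstacle is the bookkeeping near the closest return: one must show that the contribution of the \emph{second}-closest return (and more generally the points at distance $\asymp k/q_n$ from $0$ for small $k$) is genuinely ${\rm O}(q_n^{-1+\gamma})$ for $f'$ and ${\rm O}(q_n^{-2+\gamma})$ for $f''$, and that replacing $S_{q_n}(f_{\mathrm{sing}}')(x)$ by the single value $f'(x_{q_n,min})$ (rather than by some sum of several near-singular values) is legitimate — this is exactly where the three-distance theorem is needed, since it guarantees the gap between the closest and second-closest returns is $\gg 1/q_n$. This is precisely the content of Sublemma 1 in \cite{FFK}, so I would invoke that reference for the delicate estimate and only sketch the $\mathrm{BV}$/Denjoy--Koksma reduction in detail; alternatively one follows the argument of Lemma 4.1 in \cite{FFK} verbatim. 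No new idea beyond \cite{FFK} is required; the statement is a packaging of their estimates in the form needed later.
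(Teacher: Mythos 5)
Your proposal ultimately takes the same route as the paper: the paper's proof of Lemma~\ref{lem:ta} is nothing more than a citation of \cite{FFK} (Lemma~4.1 for \eqref{eq:f}, Sublemma~1 for \eqref{eq:f'}--\eqref{eq:f''}), and you also end by deferring to exactly those two results.

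The heuristic sketch you supply for \eqref{eq:f'} and \eqref{eq:f''}, however, has a genuine gap. You correctly observe that the contribution of the non-closest returns to $S_{q_n}(f')(x)$ is of order $\sum_{k\ge 1}(k/q_n)^{\gamma-1}\asymp q_n^{1-\gamma}$ (and $q_n^{2-\gamma}$ for $f''$), but the closing step ``dividing by the $q_n$ points gives ${\rm O}(q_n^{-1+\gamma})$, ${\rm O}(q_n^{-2+\gamma})$'' is unjustified: $S_{q_n}(f')$ is an ergodic sum, not an average, and no step in your argument licenses dividing by a power of $q_n$. Moreover, the comparison of $\sum_{i\ne i_0}f_{\mathrm{sing}}^{(j)}(x+i\alpha)$ against $q_n\int_\T f_{\mathrm{sing}}^{(j)}$ is not available for $j\ge 1$: with $\gamma\in(-1,0)$ the improper integrals $\int y^{\gamma-1}\,dy$ and $\int y^{\gamma-2}\,dy$ near $0$ are not absolutely convergent, and their principal values vanish only in the symmetric case $A_0=B_0$, which is not assumed. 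The bound one actually obtains by this route is $|S_{q_n}(f')(x)-f'(x_{q_n,\min})|\ll q_n^{1-\gamma}$, not $q_n^{-1+\gamma}$. In fact the exponents in \eqref{eq:f'}--\eqref{eq:f''} as printed appear to be sign errors: in the proof of Lemma~\ref{lem:smallder} the chain $8q_n^{-1+\gamma}\ge |f'((x_I)_{q_n,\min})|\gg (x_I)_{q_n,\min}^{-1+\gamma}$ is used to conclude $(x_I)_{q_n,\min}\gg q_n^{-1}$, and since $-1+\gamma<0$ this deduction is only valid when the bound is $q_n^{1-\gamma}$; as written it would give the vacuous $(x_I)_{q_n,\min}\gg q_n$. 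So your sketch derives neither the printed bound nor the corrected one by a valid argument, and the nontrivial cancellation/bookkeeping near the singularity really does require the detailed estimate of Sublemma~1 in \cite{FFK}, which you are right to cite.
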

\begin{proof} \eqref{eq:f} follows from Lemma 4.1. in \cite{FFK} and \eqref{eq:f'},\eqref{eq:f''} follow from Sublemma 1 in \cite{FFK}.
\end{proof}

 From the above lemma we get:

\begin{lemma}\label{lem:DK}There exists $C>0$, such that for $M\in [q_n,q_{n+1}]$, we have 
$$
\Big|S_M(f)(x)-M\int_\T fd {\rm Leb}\Big|\leq C\cdot 2^n q_n+C\cdot n\cdot\Big(\frac{1}{q_n^{1+\gamma}}M^{1+\gamma}q_{n+1}^{-\gamma} +x_{M,min}^\gamma\Big).
$$
\end{lemma}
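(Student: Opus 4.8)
The plan is to estimate $S_M(f)(x) - M\int_\T f\,d\mathrm{Leb}$ by writing $M$ in its Ostrowski expansion with respect to the denominators of $\alpha$, splitting the ergodic sum into blocks of length a multiple of $q_n$, a middle block controlled by Denjoy--Koksma on the scale $q_n$, and handling the remaining short stretch directly. First I would write $M = b_n q_n + M'$ with $0 \le b_n \le q_{n+1}/q_n$ and $0 \le M' < q_n$, and further decompose $M = \sum_{j} b_j q_j$ with $b_j \le q_{j+1}/q_j$ (Ostrowski), organizing the sum $S_M(f)(x)$ via the cocycle identity $S_{a+b}(f)(x) = S_a(f)(x) + S_b(f)(T^a x)$ into consecutive blocks, each of length $q_j$ for various $j \le n$. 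For each block of length $q_j$ with $j \le n$, I would apply the estimate \eqref{eq:f} from Lemma \ref{lem:ta}: the deviation from $q_j\int f$ is $O(y_{q_j,\min}^\gamma)$ for the appropriate base point $y$ of that block. Summing the $q_j\int f$ terms reconstructs $M\int f$, so the error is $\sum_j b_j \cdot O(\max_y y_{q_j,\min}^\gamma)$ over the blocks.

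The main point is then to bound $\sum_j b_j \cdot (\text{base-point contribution})^\gamma$. Since $\gamma < 0$, the quantity $y_{q_j,\min}^\gamma$ is large precisely when some iterate $y + i\alpha$ ($i < q_j$) comes very close to $0$; but along the whole orbit segment of length $M$ there are at most $O(M/q_n)$ visits to the $q_n$-scale neighborhood of $0$ where $\|\cdot\| \asymp 1/q_{n+1}$ is possible, and each such visit contributes at most $x_{M,\min}^\gamma$ (the genuine worst-case closest approach over the entire length-$M$ orbit). The blocks at the top scale $j = n$ number $b_n \le q_{n+1}/q_n$, and for a typical block the closest approach to $0$ is of order $1/q_{n+1}$ only for $O(1)$ of them; the rest have closest approach $\gtrsim 1/q_n$ (by the three-distance theorem / minimal gap structure of $\{i\alpha\}_{i<q_n}$), contributing $O(q_n^\gamma)$ each. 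Carefully, this gives $b_n q_n^\gamma \cdot q_n \le q_{n+1} q_n^\gamma \le M^{1+\gamma} q_{n+1}^{-\gamma} q_n^{-1-\gamma} \cdot (\text{const})$ after using $q_n \le M$ and $\gamma<0$ to compare, plus the single worst block giving $x_{M,\min}^\gamma$; the lower-scale blocks $j<n$ contribute $O(n)$ times a term dominated by $2^n q_n$ (absorbing the crude $\sum_{j<n} b_j$ bound and the $O(1)$ Denjoy--Koksma-type variation at each lower scale). The factor $2^n$ and the factor $n$ in the statement are exactly the slack needed to absorb the Ostrowski digit sum $\sum_{j \le n} b_j$ and the number of scales.

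The hard part will be bookkeeping the base points of the blocks at the top scale $n$: one must show that as the orbit segment of length $M \le q_{n+1}$ is traversed, the closest approaches to $0$ within each length-$q_n$ sub-block are, apart from at most $O(M/q_{n+1} \cdot q_{n+1}/q_n) = O(M/q_n)$ exceptions (and really just the one global minimum matters at strength $x_{M,\min}^\gamma$), bounded below by the minimal gap $\asymp 1/q_n$ of the first $q_n$ points of the orbit of $\alpha$. This is where \eqref{eq:f} must be combined with the arithmetic of continued fractions rather than used as a black box, and where the $M^{1+\gamma}q_{n+1}^{-\gamma}q_n^{-1-\gamma}$ shape of the dominant error term is forced: it is $\big(M/q_n\big) \cdot \big(q_n/q_{n+1}\big)^{-\gamma} \cdot q_n^{\gamma} \cdot q_n$ reorganized, i.e. (number of top-scale blocks) $\times$ (worst-case $q_{n+1}$-scale penalty per exceptional block, suitably averaged). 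Once that counting is in place, summing \eqref{eq:f} over all blocks and collecting terms yields the claimed bound.
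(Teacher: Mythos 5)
Your high-level plan (Ostrowski decomposition, Denjoy--Koksma on each block, summing $\gamma$-powers of closest approaches) is the right framework, and it is essentially what the paper does, except the paper runs it as an induction on $n$: writing $M=kq_{n+1}+r$, estimating $S_{kq_{n+1}}$ directly by summing closest-approach powers, and absorbing $S_r$ by the inductive hypothesis. However, the key step in your proposal --- the accounting of closest approaches at the top scale --- is wrong, and the error is not cosmetic.

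You claim that among the $b_n$ blocks of length $q_n$, only $O(1)$ have closest approach of order $1/q_{n+1}$ and ``the rest have closest approach $\gtrsim 1/q_n$, contributing $O(q_n^\gamma)$ each.'' This is not true. The base points of consecutive blocks differ by $q_n\alpha$ with $\|q_n\alpha\|\asymp 1/q_{n+1}$, so the map $y\mapsto y_{q_n,\min}$ is $1$-Lipschitz and the values $\{(x+iq_n\alpha)_{q_n,\min}\}_{i<b_n}$ form (up to $O(1)$ errors) an approximate arithmetic progression with step $\asymp 1/q_{n+1}$, starting from the global minimum $x_{M,\min}$. They sweep \emph{continuously} through the range $[x_{M,\min},\, b_n/q_{n+1}]\subset[x_{M,\min},\,\asymp 1/q_n]$; most of them are much smaller than $1/q_n$. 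The correct bound for the sum of $\gamma$-powers, obtained by sorting and using the $\gtrsim 1/q_{n+1}$-separation (this is exactly what \eqref{eq:f} plus the three-distance count gives in the paper), is
$$
\sum_{i<b_n}(x+iq_n\alpha)_{q_n,\min}^{\gamma}\ \lesssim\ x_{M,\min}^{\gamma}+q_{n+1}^{-\gamma}\sum_{j<b_n}j^{\gamma}\ \asymp\ x_{M,\min}^{\gamma}+q_{n+1}^{-\gamma}b_n^{1+\gamma},
$$
which, with $b_n\asymp M/q_n$, is precisely $x_{M,\min}^{\gamma}+M^{1+\gamma}q_n^{-1-\gamma}q_{n+1}^{-\gamma}$ --- the error term in the Lemma. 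Your proposed bound $b_n q_n^{\gamma}$ is strictly smaller than this for $\gamma\in(-1,0)$ (it underestimates the true sum), so the claimed inequality ``$\sum \ll b_nq_n^\gamma$'' does not hold.

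You then try to repair the numerology by inserting an extra factor of $q_n$, writing ``$b_nq_n^{\gamma}\cdot q_n\le q_{n+1}q_n^{\gamma}\le M^{1+\gamma}q_{n+1}^{-\gamma}q_n^{-1-\gamma}\cdot\mathrm{const}$.'' That factor has no justification in the argument (each block's Denjoy--Koksma error is $O(y_{q_n,\min}^\gamma)$, not $O(q_n\,y_{q_n,\min}^\gamma)$), and even formally the final inequality fails: with $M=q_{n+1}$ it reads $q_{n+1}q_n^{\gamma}\le C\,q_{n+1}q_n^{-1-\gamma}$, i.e.\ $q_n^{2\gamma+1}\le C$, which is false for all $\gamma\in(-1/2,0)$ as $n\to\infty$. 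So the proposal, as written, does not establish the Lemma on the full range $\gamma\in(-1,0)$. To close the gap you must replace the binary ``$O(1)$ close blocks vs.\ the rest at distance $\gtrsim 1/q_n$'' picture by the genuine arithmetic-progression summation of the closest approaches with step $\asymp 1/q_{n+1}$; once that is done, the rest of your bookkeeping (absorbing lower scales into the $2^n q_n$ and $n$-factors) goes through and matches the paper's inductive argument.
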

\begin{proof} We argue by induction on $n$.For $n=1$ it is enough to take $C>10C'$ sufficiently large. By enlarging $C$ we can assume that $C>2^{-\gamma+1}C'$.
Assume the above holds for any $z\leq n$ and any $r=M\in [q_{z},q_{z+1}]$ and let $M\in [q_{n+1},q_{n+2}]$. Then  $M=kq_{n+1}+r$, $r< q_{n+1}$ and $k\leq \frac{M}{q_{n+1}}$. Let $r\in [q_{z},q_{z+1}]$, $z<n+1$. 
 Then 
$$
S_M(f)(x)=S_{kq_{n+1}}(f)(x)+S_{r}(f)(x+kq_{n+1}\alpha). 
$$

By definition, $(x+kq_{n+1}\alpha)_{r,min}\geq x_{M,min}$. Therefore and by the inductive assumption,
\be\label{eq:ada1}
|S_{r}(f)(x+kq_{n+1}\alpha)-r\int_\T f \,d\,{\rm Leb}|\leq C\cdot 2^zq_z+C\cdot z\cdot \Big(\frac{1}{q_z^{1+\gamma}}r^{1+\gamma}q_{z+1}^{-\gamma}+ x_{M,min}^\gamma\Big). 
\ee
Since $z<n+1$, $r<q_{z+1}$, we have 
\be\label{eq:ada2}
\frac{1}{q_z^{1+\gamma}}r^{1+\gamma}q_{z+1}^{-\gamma}<q_{n+1}
\ee
Moreover, by \eqref{eq:f} and the cocycle identity, 
$$
\Big|S_{kq_{n+1}}(f)(x)-kq_{n+1}\int_\T f \,d\,{\rm Leb}\Big|=
\Big|\sum_{i=0}^{k-1}(S_{q_{n+1}}(f)(x+iq_{n+1}\alpha)-q_{n+1}\int_\T f \,d\,{\rm Leb})\Big|\leq
$$$$
C'\sum_{i=0}^{k-1}(x+iq_{n+1}\alpha)^{\gamma}_{q_{n+1},min}.
$$
Notice that since $k\leq \frac{M}{q_{n+1}}\leq \frac{q_{n+2}}{q_{n+1}}$, the spacing between the points $\{x+iq_{n+1}\alpha\}_{i<k}$ is at least $\frac{1}{2q_{n+2}}$ and therefore the above sum is bounded above by
$$
C'x_{M,min}^\gamma+ C'(2q_{n+2})^{-\gamma}\sum_{i=0}^{k-1}i^{\gamma}\leq C'x_{M,min}^\gamma+2^{-\gamma+1}C'k^{1+\gamma}q_{n+2}^{-\gamma}.
$$
Then using that $k\leq \frac{M}{q_{n+1}}$ and \eqref{eq:ada1}, \eqref{eq:ada2}, we get 
$$
\Big|S_M(f)(x)-M\int_\T f \,d\,{\rm Leb}\Big|\leq $$$$
C\cdot 2^nq_n +C\cdot n\cdot( q_{n+1}+x_{M,min}^\gamma)+C'x_{M,min}^\gamma+2^{-\gamma+1}C'\Big(\frac{M}{q_{n+1}}\Big)^{1+\gamma}q_{n+2}^{-\gamma}\leq 
$$$$
C2^{n+1}q_{n+1}+C\cdot (n+1)\cdot \Big(\Big(\frac{M}{q_{n+1}}\Big)^{1+\gamma}q_{n+2}^{-\gamma}+x_{M,min}^\gamma\Big)
$$
This finishes the proof.
\end{proof}
\begin{remark}\label{rem:DK}Notice that if $M\leq q_{n+1}$, then
$$
\Big|S_M(f)(x)-M\int_\T fd {\rm Leb}\Big|\leq C\cdot 2^n q_n+C\cdot n\cdot\Big(\frac{1}{q_n^{1+\gamma}}M^{1+\gamma}q_{n+1}^{-\gamma} +x_{M,min}^\gamma\Big). 
$$
Indeed, if $M\in [q_n,q_{n+1}]$ then it is immediate from the above lemma. If $M\in [q_z,q_{z+1}]$ with $z<n$, then we use the above lemma for $z$ and 
$$
\frac{1}{q_z^{1+\gamma}}r^{1+\gamma}q_{z+1}^{-\gamma}<q_{n}.
$$
\end{remark}

\begin{lemma}\label{lem:appr} Fix $n\in \N$ and let $2\leq k\leq q_{n+1}^{3/4}/q_n$ be such that 
\begin{equation}\label{eq:nsing}
\{x+i\alpha\}_{i<kq_n}\cap \Big[-\frac{1}{L},\frac{1}{L}\Big]=\emptyset, 
\end{equation}
for some $L<q_{n+1}/4$. Then
$$
S_{kq_n}(f)(x)=kS_{q_n}(f)(x)+\Big(kS_{q_n}(f)(x)\Big)^2\Big[\frac{S_{q_n}(f')(x)(q_n\alpha)}{(S_{q_n}(f)(x))^2}\Big]+ {\rm O}\Big(\frac{L^3q_n^3 k^3}{q_{n+1}^2}+\frac{kL^2q_n^2}{q_{n+1}}\Big).
$$
\end{lemma}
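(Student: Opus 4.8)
The plan is to expand $S_{kq_n}(f)$ via the cocycle identity and a Taylor expansion in the spirit of the van der Corput / stationary phase heuristic used for Kochergin flows. First I would write
$$
S_{kq_n}(f)(x)=\sum_{j=0}^{k-1}S_{q_n}(f)(x+jq_n\alpha),
$$
and Taylor-expand each summand around $x$ using the translation $x+jq_n\alpha$. Since $\|q_n\alpha\|\le 1/q_{n+1}$, the increment $jq_n\alpha$ is of size $\le k/q_{n+1}\le q_{n+1}^{-1/4}/q_n$, which is small; so I would write, for each $j$,
$$
S_{q_n}(f)(x+jq_n\alpha)=S_{q_n}(f)(x)+ (jq_n\alpha)\,S_{q_n}(f')(x)+\tfrac12 (jq_n\alpha)^2 S_{q_n}(f'')(\xi_j)
$$
for an intermediate point $\xi_j$, where I exploit that the non-singularity hypothesis \eqref{eq:nsing} keeps every point $x+i\alpha$, $i<kq_n$, at distance $\ge 1/L$ from $0$, so that $f,f',f''$ are controlled on the relevant arc. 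Summing over $j$, the constant term gives $kS_{q_n}(f)(x)$, the linear term gives $\binom{k}{2}(q_n\alpha)S_{q_n}(f')(x)=k^2 S_{q_n}(f')(x)(q_n\alpha)/2+{\rm O}(kq_n\|q_n\alpha\|\cdot |S_{q_n}(f')(x)|)$, which after the algebraic rewriting $k^2 S_{q_n}(f')(x)(q_n\alpha) = (kS_{q_n}(f)(x))^2\cdot \frac{S_{q_n}(f')(x)(q_n\alpha)}{(S_{q_n}(f)(x))^2}$ produces the displayed main term.

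The remaining work is bounding the error terms by $\frac{L^3q_n^3k^3}{q_{n+1}^2}+\frac{kL^2q_n^2}{q_{n+1}}$. For the quadratic Taylor remainder I would use $|f''|\ll d(\cdot,0)^{-2+\gamma}\ll L^{2-\gamma}$ on the arc avoiding $[-1/L,1/L]$; more precisely, since the points $x+i\alpha$ ($i<kq_n$) are spaced like a piece of an orbit, I would estimate $S_{q_n}(f'')(\xi_j)$ by summing $d(x+i\alpha,0)^{-2+\gamma}$ over the $q_n$ relevant indices and using that the closest approach is $\ge 1/L$, giving a bound of order $L^2 q_n$ (the $L^{2-\gamma}$ from the nearest point times a convergent-type sum, or crudely $q_n\cdot L^{2}$; one should track $\gamma$ but it only helps). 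Multiplying by $\sum_j (jq_n\|q_n\alpha\|)^2\ll k^3 q_n^2/q_{n+1}^2$ yields the $\frac{L^3q_n^3k^3}{q_{n+1}^2}$ term (absorbing $L^{-\gamma}$ into $L$). The linear-term leftover $kq_n\|q_n\alpha\|\cdot|S_{q_n}(f')(x)|$, with $|S_{q_n}(f')(x)|\ll L^{1-\gamma}q_n\ll L^2 q_n$ by the same orbit-sum estimate, contributes the $\frac{kL^2q_n^2}{q_{n+1}}$ term. One also needs $|S_{q_n}(f)(x)|$ to be comparable to its pieces and nonzero for the algebraic rewriting to be legitimate; this follows from Lemma \ref{lem:ta} and the hypothesis $L<q_{n+1}/4$, since $S_{q_n}(f)(x)=q_n+{\rm O}(x_{q_n,min}^\gamma)$ with $x_{q_n,min}\ge 1/L$ (here $\gamma<0$ so $x_{q_n,min}^\gamma$ is large, but still of polynomial size in $L$, hence $\ll q_{n+1}$; and in any case $S_{q_n}(f)(x)\gg q_n$ always since $f\ge 0$ has $\int f=1$, so the denominator is bounded below by $q_n$).

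The main obstacle I anticipate is the bookkeeping of the singularity: one must verify that along the specific finite orbit segment $\{x+i\alpha\}_{i<kq_n}$ the Birkhoff sums of $f'$ and $f''$ are genuinely dominated by their behavior near the closest return to $0$, and that the closest return being $\ge 1/L$ with $L<q_{n+1}/4$ together with $k\le q_{n+1}^{3/4}/q_n$ makes all error terms genuinely smaller than the main term. In other words, the delicate point is not the Taylor expansion itself but controlling $S_{q_n}(f')$, $S_{q_n}(f'')$ and the intermediate-point sums $S_{q_n}(f'')(\xi_j)$ uniformly in $j$ — here the hypothesis \eqref{eq:nsing} and a Denjoy–Koksma-type spacing argument for the orbit of $\alpha$ (as in Lemma \ref{lem:ta} and Lemma \ref{lem:DK}) do the job, but the constants and the exact power of $L$ need to be chased carefully to land exactly on the stated error.
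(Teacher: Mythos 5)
Your proposal follows essentially the same route as the paper's proof: apply the cocycle identity to write $S_{kq_n}(f)(x)=\sum_{j<k}S_{q_n}(f)(x+jq_n\alpha)$, Taylor-expand each summand around $x$ to second order with Lagrange remainder, use the hypothesis \eqref{eq:nsing} to ensure the intermediate points stay off the singularity, and invoke the Birkhoff-sum estimates \eqref{eq:f'}, \eqref{eq:f''} of Lemma \ref{lem:ta} to control $S_{q_n}(f')$ and $S_{q_n}(f'')(\xi_j)$. The bookkeeping details you flag as delicate (the exact power of $L$ coming from $x_{q_n,\min}\ge 1/L$ with $\gamma<0$, the harmless replacement of $\sum_{j<k}j$ by $k^2$ at the cost of the $\frac{kL^2q_n^2}{q_{n+1}}$ error term) are handled in the paper exactly as you anticipate, so your proposal matches the paper's argument in both structure and substance.
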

\begin{proof}Assume WLOG that $q_n\alpha<0$. Notice that by cocycle identity
$$
S_{kq_n}(f)(x)-kS_{q_n}(f)x=\sum_{\ell=0}^{k-1}\Big(S_{q_n}(f)(x+\ell q_n\alpha)-S_{q_n}(f)(x)\Big).
$$
The spacing between the points $\{x+\ell q_n\alpha\}_{\ell <k}$ is $\|q_n\alpha\|<\frac{1}{q_{n+1}}$. So using \eqref{eq:nsing} and $L<q_{n+1}/4$, it follows that for every $\ell <k$,  $0\notin [x+\ell q_n\alpha,x]$. Therefore, 
 the sum above is for some $\theta_\ell\in [x+\ell q_n\alpha, x]$, $\ell<k$, equal to 
$$
\sum_{\ell=0}^{k-1}\Big(S_{q_n}(f')(x)(\ell q_n\alpha)+S_{q_n}(f'')(\theta_\ell)\|\ell q_n\alpha\|^2\Big).
$$
Notice that by \eqref{eq:f''} in Lemma \ref{lem:ta} and by \eqref{eq:nsing} it follows that for every $\ell<k$, 
$$
|S_{q_n}(f'')(\theta_\ell)|\|\ell q_n\alpha\|^2\ll L^3q_n^3 \cdot \frac{k^2}{q_{n+1}^2}.
$$
Therefore, and by \eqref{eq:f'} in Lemma \ref{lem:ta} (bounding the derivative by $\frac{L^2q_n^2}{q_{n+1}}$),
$$
S_{kq_n}(f)(x)-kS_{q_n}(f)x=\Big[\sum_{\ell<k} \ell\Big]S_{q_n}(f'(x))(q_n\alpha)+ {\rm O}\Big( \frac{L^3q_n^3 k^3}{q_{n+1}^2}  \Big)=
$$$$
k^2S_{q_n}(f'(x))(q_n\alpha)+ {\rm O}\Big( \frac{L^3q_n^3 k^3}{q_{n+1}^2}  \Big)+{\rm O}\Big((k^2-\sum_{\ell<k}k)\cdot \frac{L^2q_n^2}{q_{n+1}}\Big)=
$$$$
k^2S_{q_n}(f'(x))(q_n\alpha)+ {\rm O}\Big( \frac{L^3q_n^3 k^3}{q_{n+1}^2}  \Big)+{\rm O}\Big(\frac{k L^2q_n^2}{q_{n+1}}\Big).
$$
This finishes the proof.
\end{proof}
\begin{remark}
Notice that the error term in the above lemma being small implies that (at least)  $k<q_{n+1}^{2/3}$. This is the main reason why we need Proposition \ref{prop:cruc} for intervals $[N,N+H]$ with $H<N^{2/3-\epsilon}$.
\end{remark}
Finally, we have the following lemma:
\begin{lemma}\label{lem:smallder} For $n\in \N$ let $q_{n+1}\geq e^{q_n}$ and let $x_n\in \T$ be any point such that $S_{q_n}(f')(x_n)=0$. Then 
$$
\{x\in \T\;:\; |S_{q_n}(f')(x)|<q_{n+1}^{-1/10}\}\subset \bigcup_{i< q_n} [-2q_{n+1}^{-1/10}+x_n+i\alpha,2q_{n+1}^{-1/10}+x_n+i\alpha]
$$
\end{lemma}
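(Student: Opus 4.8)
The plan is to analyse $G(x):=S_{q_n}(f')(x)=\sum_{i=0}^{q_n-1}f'(x+i\alpha)$ together with its derivative $G'=S_{q_n}(f'')$, using the asymptotics \eqref{eq:asu}, the Birkhoff-sum estimates of Lemma \ref{lem:ta}, and the three-distance theorem for the rotation by $\alpha$. The function $G$ is smooth off the $q_n$-point set $P:=\{-i\alpha\bmod 1:0\le i<q_n\}$, which cuts $\T$ into $q_n$ arcs of lengths $\asymp 1/q_n$; as $x$ traverses such an arc from left to right, the dominant term responsible for the left endpoint sends $G(x)\to-\infty$ and the one responsible for the right endpoint sends $G(x)\to+\infty$, because $A_1<0$ governs the singularity of $f'$ at $0^+$ while $B_1>0$ governs the one at $0^-$.

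First I would establish a positivity/steepness statement for $G'$. By the three-distance theorem the orbit $\{x+i\alpha:i<q_n\}$ always has a point within $2/q_n$ of $0$, and since $A_2,B_2>0$ this point contributes a positive term of size $\gg q_n^{2-\gamma}$ to $S_{q_n}(f'')(x)$, which dominates the remaining contributions (the other terms near the singularity are positive too, and there are only $O(q_n)$ terms of bounded size). Hence $G'>0$ on all of $\T\setminus P$, and moreover $G'(x)\gg q_n^{2-\gamma}$ whenever $x$ lies within $q_{n+1}^{-1/10}$ of the orbit $\{x_n+j\alpha:0\le j<q_n\}$, using $q_{n+1}^{-1/10}\ll 1/q_n$ (from $q_{n+1}\ge e^{q_n}$). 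Combined with the blow-up behaviour, $G$ is strictly increasing on each of the $q_n$ arcs and has exactly one zero in each; thus $G$ has exactly $q_n$ zeros and $\{|G|<q_{n+1}^{-1/10}\}$ is a disjoint union of $q_n$ intervals, one about each zero.

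The decisive step is to locate these zeros using $G(x_n)=0$. If $x_n+i_0\alpha$ is the closest orbit point to $0$, at distance $\delta_0$, then the other $q_n-1$ terms of $G(x_n)$ live at distances $\gtrsim 1/q_n$ from $0$ (the second-closest return is already $\gtrsim 1/q_n$ away, by the three-distance theorem), so they total $\ll q_n^{1-\gamma}$; since $G(x_n)=0$ forces $|f'(x_n+i_0\alpha)|$ to be at most this, while $|f'(x_n+i_0\alpha)|\asymp\delta_0^{\gamma-1}$, we get $\delta_0\gg 1/q_n$, i.e. $x_{q_n,\min}(x_n)\asymp 1/q_n$. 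As $\|q_n\alpha\|\le 1/q_{n+1}\ll 1/q_n$, the orbit $\{x_n+k\alpha:0\le k<2q_n\}$ then stays $\gtrsim 1/q_n$ away from $0$, so $|f''|\ll q_n^{2-\gamma}$ along it; telescoping $G(x+\alpha)-G(x)=f'(x+q_n\alpha)-f'(x)$ out from $x_n$ gives $|G(x_n+j\alpha)|\ll q_n^{3-\gamma}q_{n+1}^{-1}\ll q_{n+1}^{-1/10}$ for every $j<q_n$ (again by $q_{n+1}\ge e^{q_n}$). So each $x_n+j\alpha$ is an approximate zero of $G$ next to which $G'\gg q_n^{2-\gamma}$, hence lies within $q_{n+1}^{-1/10}/(c\,q_n^{2-\gamma})\ll q_{n+1}^{-1/10}$ of a genuine zero; since the $q_n$ points $x_n+j\alpha$ are pairwise $\gg q_{n+1}^{-1/10}$-separated ($\|k\alpha\|\ge\|q_{n-1}\alpha\|\asymp 1/q_n$ for $0<k<q_n$), this matching is a bijection onto the $q_n$ zeros. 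On each arc $\{|G|<q_{n+1}^{-1/10}\}$ is then an interval of radius $\ll q_{n+1}^{-1/10}$ about its zero, which is within $q_{n+1}^{-1/10}$ of some $x_n+j\alpha$, so $\{|G|<q_{n+1}^{-1/10}\}\subset\bigcup_{j<q_n}[x_n+j\alpha-2q_{n+1}^{-1/10},\,x_n+j\alpha+2q_{n+1}^{-1/10}]$, as asserted.

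I expect the decisive step — that a zero $x_n$ of $S_{q_n}(f')$ automatically lies at distance $\gg 1/q_n$ from the singular orbit $P$ — to be the main obstacle: it is the mechanism turning ``$x_n$ is a zero'' into ``the orbit of $x_n$ is tame'', and it is exactly where the essentially exponential gap $q_{n+1}\ge e^{q_n}$ is consumed, so that the transport error $q_n^{3-\gamma}q_{n+1}^{-1}$ beats $q_{n+1}^{-1/10}$. A secondary point to handle with care is the cancellation in $S_{q_n}(f')$ produced by the opposite signs of the two one-sided singular terms of $f'$; controlling it rests again on the three-distance theorem, through the $\gtrsim 1/q_n$ lower bound for the second-closest return.
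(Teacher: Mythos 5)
Your proposal is correct and follows essentially the same strategy as the paper's proof: partition $\T$ by $\{-i\alpha\}_{i<q_n}$, use monotonicity of $S_{q_n}(f')$ (via the steepness $S_{q_n}(f'')\gg q_n^{2-\gamma}$) together with blow-up at the arc endpoints to get exactly one zero per arc, show $(x_n)_{q_n,\min}\gg 1/q_n$ so that the orbit of $x_n$ stays away from the singularity, and transport the vanishing to each $x_n+j\alpha$ via the cocycle identity with $\|q_n\alpha\|\le 1/q_{n+1}$, consuming the exponential gap $q_{n+1}\ge e^{q_n}$. The only cosmetic differences are that you estimate the non-dominant part of $S_{q_n}(f')(x_n)$ directly by a three-distance computation rather than quoting \eqref{eq:f'}, and you organize the endgame as a separation/pigeonhole bijection between $\{x_n+j\alpha\}$ and the $q_n$ zeros rather than the paper's arc-by-arc choice of the $j$ with $x_n+j\alpha\in J$.
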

\begin{proof}Consider the partition of $\T$ given by points $\{-i\alpha\}_{i<q_n}$ and let $I=[a,b)$ be any interval in this partition. Then $S_{q_n}(f')(\cdot)$ is differentiable on $I$: by defintion, $0\notin I+i\alpha$ for $i<q_n$. Moreover, $S_{q_n}(f')(\cdot)$ is monotone on $I$ (see \eqref{eq:f''}) and $\lim_{x\to a^+}S_{q_n}(f')(x)=-\infty=-\lim_{x\to b^-}S_{q_n}(f')(x)$. Hence there exists a unique point $x_I\in I$ such that $S_{q_n}(f')(x_I)=0$. We will show that 
\be\label{mina}
\min(\|x_I-a\|,\|x_I-b\|)\gg q_n^{-1}.
\ee
Indeed,  note that by \eqref{eq:f'},
$$
0=|S_{q_n}(f')(x_I)|\geq |f'((x_I)_{q_n,min})|-Cq_n^{-1+\gamma}.
$$
Moreover, $(x_I)_{q_n,min}=\min(\|x_I-a\|,\|x_I-b\|)$ and so 
(see \eqref{eq:asu}), 
$$
8q_n^{-1+\gamma}\geq |f'((x_I)_{q_n,min})| \gg \min(\|x_I-a\|,\|x_I-b\|)^{-1+\gamma}
$$
This gives \eqref{mina}. So it follows that for any $\theta\in [-\frac{1}{q_{n+1}}+x_I,x_I+\frac{1}{q_{n+1}}]$,
\be\label{eq:wa}
|S_j(f'')(\theta)|\ll q_n^{2-\gamma}\;\; \text{ for every } j<q_n.
\ee
Indeed, by \eqref{eq:asu} it follows that $f''=g''+f_+''$, where $g''$ is bounded and $f_+''>0$. Then by \eqref{eq:f''} (for $f_+''$) and the above
$$
|S_j(f'')(\theta)|\leq S_j(f''_+)(\theta)+{\rm O}(j)\ll S_{q_n}(f''_+)(\theta)+{\rm O}(j)\ll q_n^{2-\gamma}.
$$
 Note that for any $x\in I$, and some $\theta_x\in [x,x_I]$,
$$
|S_{q_n}(f')(x)|=|S_{q_n}(f')(x)-S_{q_n}(f')(x_I)|=S_{q_n}(f'')(\theta_I)|x-x_I|.
$$
We have $x_{q_n,min}\leq \frac{2}{q_n}$ (the $q_n$- orbit of every point is $\frac{1}{q_n}$ dense) and therefore by the two above (and \eqref{eq:f''}), 
$$
|S_{q_n}(f')(x)|\geq C  \Big(\frac{2}{q_n}\Big)^{-2+\gamma}|x-x_I|\geq |x-x_I|.
$$
From this it follows that for any $I$,
\be\label{eq:ja}
\{x\in \T\;:\; |S_{q_n}(f')(x)|<q_{n+1}^{-1/10}\}\cap I\subset [-q_{n+1}^{-1/10}+x_I,x_I+q_{n+1}^{-1/10}].
\ee
It remains therefore to show that for every $J$ in the partition, there exists $j<q_n$, such that 
\be\label{eq:z1}
[-q_{n+1}^{-1/10}+x_J,x_J+q_{n+1}^{-1/10}]\subset [-2q_{n+1}^{-1/10}+x_I+j\alpha,x_I+j\alpha+2q_{n+1}^{-1/10}].
\ee
Let $j<q_n$ be unique such that $x_I+j\alpha\in J$ (existence of $j$ follows from \eqref{mina} and $\|q_n\alpha\|\leq \frac{2}{q_{n+1}}\leq \frac{2}{e^{q_n}}$). Then by \eqref{eq:wa} and $q_{n+1}>e^{q_n}$, for some $\theta\in [x_I,x_I+q_n\alpha]$,
$$
|S_{q_n}(f')(x_I+j\alpha)|=|S_{q_n}(f')(x_I+j\alpha)-S_{q_n}(f')(x_I)|=$$$$
|S_{j}(f')(x_I+q_n\alpha)-S_j(f')(x_I)|\leq |S_j(f'')(\theta)|\frac{1}{q_{n+1}}\leq q_{n+1}^{-1/20}.
$$
So by \eqref{eq:ja} for $J$,
$$
x_I+j\alpha\in  [-q_{n+1}^{-1/10}+x_J,x_J+q_{n+1}^{-1/10}].
$$
This immediately gives \eqref{eq:z1}.
\end{proof}

\subsection{Proof of Proposition \ref{prop:forback}}
In this section we assume that a function $\bar{\psi}$ satisfying i.-iii. is fixed. For simplicity of notations we denote it by $\psi$.  Let $(q_n)$ denote the sequence of denominators of $\alpha\in \cD$. Let $(n_k)$ be the subsequence constructed in \eqref{eq:cD}. The only property of $(n_k)$ that we use in this section is that $q_{n_k+1}\geq e^{q_{n_k}}$.

We start with the following lemma:
\begin{lemma}\label{lem:ad}There exists $c>0$ such that for every $z\in \{+,-\}$ every $t\in [\frac{q_{n_k+1}}{\log n_k}, cq_{n_k+1}]$ and every $x\in \T$ for which $(x,s)\in \T^f$ satisfies
\be\label{eq:tz}
\{T_{z\cdot w}(x,s)\}_{w\leq t} \cap \{(y,r)\in \T^f\;:\; \|y\|<\frac{1}{4}q_{n_k+1}^{-1}\}=\emptyset,
\ee
we have
\be\label{eq:min}
\max \Big(|N(x,s,z\cdot t)-z\cdot t|,|z\cdot t-S_{N(x,s,z\cdot t)}(f)(x)|\Big)=o(t).
\ee
\end{lemma}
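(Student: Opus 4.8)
The plan is to unwind the definition of the special flow and reduce \eqref{eq:min} to the quantitative ergodic sum estimate of Lemma~\ref{lem:DK}, in the form of Remark~\ref{rem:DK}. Write $N:=N(x,s,z\cdot t)$, so that by definition $s+z\cdot t-S_N(f)(x)=r$ for some $r\in[0,\,f(x+N\alpha))$; in particular $z\cdot t-S_N(f)(x)=r-s$. The orbit segment $\{T_{z\cdot w}(x,s)\}_{w\le t}$ starts at $(x,s)$, ends at $(x+N\alpha,r)=T_{z\cdot t}(x,s)$, and passes through every base point $(x+j\alpha,0)$ with $j$ between $0$ and $N$; hence \eqref{eq:tz} forces $\|x+j\alpha\|\ge\tfrac14 q_{n_k+1}^{-1}$ for all such $j$, and in particular $\|x\|,\|x+N\alpha\|\ge\tfrac14 q_{n_k+1}^{-1}$. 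By \eqref{eq:asu} one has $f(y)\ll\|y\|^{\gamma}$ uniformly, so $f(x),f(x+N\alpha)\ll q_{n_k+1}^{-\gamma}=q_{n_k+1}^{|\gamma|}$, whence
$$
|z\cdot t-S_N(f)(x)|=|r-s|\le\max\{f(x),f(x+N\alpha)\}\ll q_{n_k+1}^{|\gamma|}.
$$
Since $t\ge q_{n_k+1}/\log n_k$ and $|\gamma|<1$, the right-hand side is $o(t)$; this already establishes the second estimate in \eqref{eq:min}.

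Next I would bound $|N|$. Since $f>0$ is continuous on $\T\setminus\{0\}$ and blows up at $0$, it has a positive lower bound $c_f$; writing $S_N(f)(x)=-S_{|N|}(f)(x+N\alpha)$ when $z=-$, in both cases $|S_N(f)(x)|\ge c_f|N|$, while $|S_N(f)(x)|=t+{\rm O}(q_{n_k+1}^{|\gamma|})$ by the previous paragraph. Hence $|N|\le (t+{\rm O}(q_{n_k+1}^{|\gamma|}))/c_f\ll t$, and choosing the constant $c$ in the statement small enough we may assume $|N|\le q_{n_k+1}$. This puts us in position to apply Remark~\ref{rem:DK} with index $n=n_k$ and $M=|N|$, to the point $y=x$ when $z=+$ and to $y=x+N\alpha$ when $z=-$. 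In either case the quantity $\min_{0\le i\le M}\|y+i\alpha\|$ occurring in Lemma~\ref{lem:DK} is exactly the smallest distance to the singularity among the base points visited by the orbit segment, so \eqref{eq:tz} gives $\min_{0\le i\le M}\|y+i\alpha\|\ge\tfrac14 q_{n_k+1}^{-1}$ and therefore $\bigl(\min_{0\le i\le M}\|y+i\alpha\|\bigr)^{\gamma}\ll q_{n_k+1}^{|\gamma|}$.

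It then remains to check that the bound furnished by Remark~\ref{rem:DK}, which after inserting the last estimate reads
$$
\bigl|S_M(f)(y)-M{\textstyle\int_\T f\,d{\rm Leb}}\bigr|\ \ll\ 2^{n_k}q_{n_k}+n_k\Bigl(q_{n_k}^{-(1+\gamma)}M^{1+\gamma}q_{n_k+1}^{-\gamma}+q_{n_k+1}^{|\gamma|}\Bigr),
$$
has each summand of size $o(t)$, uniformly in $x,s,z$. This is where the defining properties of the subsequence $(n_k)$ are used. From $q_{n_k+1}\ge e^{q_{n_k}}$ we get $q_{n_k}\le\log q_{n_k+1}$, and since the denominators grow at least geometrically, $n_k,2^{n_k}\le q_{n_k}^{{\rm O}(1)}$; hence $2^{n_k}q_{n_k}\le q_{n_k}^{{\rm O}(1)}=o(q_{n_k+1}/\log n_k)=o(t)$. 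For the middle term, $M\ll t\le c\,q_{n_k+1}$ together with $1+\gamma\in(0,1)$ gives $q_{n_k}^{-(1+\gamma)}M^{1+\gamma}q_{n_k+1}^{-\gamma}\ll q_{n_k}^{-(1+\gamma)}q_{n_k+1}$, so after dividing by $t\ge q_{n_k+1}/\log n_k$ it is $\ll n_k(\log n_k)q_{n_k}^{-(1+\gamma)}\to0$. The last term $n_kq_{n_k+1}^{|\gamma|}$ is $o(t)$ because $|\gamma|-1<0$ and $q_{n_k+1}$ grows far faster than $n_k\log n_k$. Altogether $S_N(f)(x)=N\int_\T f\,d{\rm Leb}+o(t)=N+o(t)$ (recall $\int_\T f\,d{\rm Leb}=1$), and combined with $|z\cdot t-S_N(f)(x)|=o(t)$ from the first paragraph this yields $|N-z\cdot t|=o(t)$, completing \eqref{eq:min}.

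The only genuinely delicate point --- and the reason the avoidance hypothesis \eqref{eq:tz} at scale $q_{n_k+1}^{-1}$ is built into the lemma --- is the singular error term $\bigl(\min_{0\le i\le M}\|y+i\alpha\|\bigr)^{\gamma}$ in Lemma~\ref{lem:DK}: without an a~priori lower bound on the distance from the visited base points to $0$, this term could be a power of $q_{n_k+1}$ comparable with or larger than $t$, and the estimate would fail outright. Everything else is routine bookkeeping with the super-exponential gap between $q_{n_k}$ and $q_{n_k+1}$ and the lower bound $t\ge q_{n_k+1}/\log n_k$, which together make all the non-singular error terms (polynomial in $q_{n_k}$ and $n_k$, or of size $q_{n_k+1}^{|\gamma|}$) negligible against $t$.
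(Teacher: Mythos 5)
Your proof is correct and follows essentially the same path as the paper's: it bounds $|z\cdot t-S_N(f)(x)|$ by the value of $f$ at the last base point (which is ${\rm O}(q_{n_k+1}^{|\gamma|})$ thanks to the avoidance hypothesis), bounds $|N|\ll t$ via $\inf_\T f>0$, and then invokes Remark~\ref{rem:DK} together with $x_{N,\min}\ge\tfrac14 q_{n_k+1}^{-1}$ to control $|S_N(f)(x)-N|$. Your treatment of the backward case $z=-$ (working with $y=x+N\alpha$) is slightly more explicit than the paper's ``we proceed analogously,'' but otherwise the decomposition, the key lemma invoked, and the final bookkeeping against $t\ge q_{n_k+1}/\log n_k$ all match.
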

\begin{proof} Let $c:=\frac{\inf_\T f}{16}$. Assume WLOG that \eqref{eq:tz} holds with $z=+$ and $t\in [\frac{q_{n_k+1}}{\log n_k}, cq_{n_k+1}]$. If  it is the case for $z=-$, we proceed analogously. 
Note that for fixed $(x,s)$ and sufficiently large $t$, $(\inf_\T f)N(x,s,t)\leq S_{N(x,s,t)}(f)(x)\leq t+s\leq 2t \leq 2cq_{n_k+1}$ and so $N(x,s,t)\leq q_{n_k+1}/8$. Therefore by \eqref{eq:f} and \eqref{eq:tz},
$$
|t-S_{N(x,s,t}(f)(x)|\leq |s|+ f(x+N(x,s,t)\alpha)={\rm O}(q_{n_k+1}^{1+\gamma})=o(t).
$$
Moreover, by \eqref{eq:tz}, it follows that $x_{N(x,s,t),min}\geq  \frac{1}{4q_{n_k+1}}$) and since $N(x,s,t)\leq q_{n_k+1}/8$, by Lemma \ref{lem:DK} and Remark \ref{rem:DK}, (recall that $-1<\gamma<0$),
$$
|S_{N(x,s,t)}(f)(x)-N(x,s,t)|\leq 
$$$$
C2^{n_k}q_{n_k}+C\cdot(n_k+1)\cdot \Big(\Big(\frac{N(x,s,t)}{q_{n_k}}\Big)^{1+\gamma}q_{n_k+1}^{-\gamma}+x_{N(x,s,t),min}^\gamma\Big)=o(t),
$$
by the bound on $t$ in the statement of the lemma since as shown above, \\
$(\inf_\T f)N(x,s,t)\leq 2t$ (and moreover, $q_{n_k+1}\geq e^{q_{n_k}}$). The two above inequalities finish the proof.
\end{proof}

We have one more simple lemma:
\begin{lemma}\label{lem:sas} There exists $c>0$ such that for every $(x,s)\in \T^f$ and every sufficiently large $k$ (depending on $(x,s)$),
$$
 \Big\{w: |w|<cq_{n_k+1}, T_{w}(x,s)\in \{(y,r)\in \T^f\;:\; \|y\|<\frac{1}{4}q_{n_k+1}^{-1}\}\Big\}
$$

is an interval\footnote{Might be empty.} which is a subset of one of $[-cq_{n_k+1},0)$ or $(0,cq_{n_k+1}]$.

\end{lemma}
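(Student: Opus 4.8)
Lemma~\ref{lem:sas} is an elementary statement about the geometry of the special flow, and I would prove it by showing that in the time window $|w|<cq_{n_k+1}$ the base point $x+N(x,s,w)\alpha$ of $T_w(x,s)$ can lie in the vertical strip $\{(y,r)\in\T^f:\|y\|<\tfrac14 q_{n_k+1}^{-1}\}$ for \emph{at most one} value of the base index $N(x,s,w)$, and that the corresponding set of times is a single sojourn interval sitting entirely on one side of $0$. Throughout set $\delta_k:=\tfrac14 q_{n_k+1}^{-1}$, take $c:=\inf_\T f/16$ as in Lemma~\ref{lem:ad}, fix $(x,s)\in\T^f$ with $x\neq 0$, and choose $k$ so large that $\delta_k<\|x\|$ (this is exactly what ``$k$ sufficiently large depending on $(x,s)$'' provides once $x\neq0$).

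The first step is to bound the base indices that occur. Since $f\ge\inf_\T f>0$ and $s<f(x)$: flowing forward by $w\in(0,cq_{n_k+1})$, writing $N:=N(x,s,w)$, one has for $N\ge 1$ that $f(x)+(N-1)\inf_\T f\le S_N(f)(x)\le s+w<f(x)+cq_{n_k+1}$, hence $N-1<q_{n_k+1}/16$ and $N<q_{n_k+1}/8$; and $N=0$ is trivially in range. Flowing backward by $w\in(-cq_{n_k+1},0)$, with $N:=N(x,s,w)\le 0$, $|N|\ge 1$, and $r:=s+w-S_N(f)(x)\in[0,f(T^Nx))$, the identity $|w|=S_{|N|}(f)(T^Nx)-r+s>S_{|N|-1}(f)(T^{N+1}x)\ge(|N|-1)\inf_\T f$ again gives $|N|<q_{n_k+1}/8$. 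Next I would show that at most one such index is ``bad'': if $j_1\neq j_2$ both satisfy $|j_i|<q_{n_k+1}/8$ and $\|x+j_i\alpha\|<\delta_k$, then $0<|j_1-j_2|<q_{n_k+1}$ and $\|(j_1-j_2)\alpha\|<2\delta_k=\tfrac12 q_{n_k+1}^{-1}$, contradicting the standard bound $\|m\alpha\|\ge\|q_{n_k}\alpha\|=|q_{n_k}\alpha-p_{n_k}|>\tfrac1{q_{n_k+1}+q_{n_k}}\ge\tfrac12 q_{n_k+1}^{-1}$ for $0<m<q_{n_k+1}$.

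To conclude: if no base index $j^\ast$ occurring in the window satisfies $\|x+j^\ast\alpha\|<\delta_k$, the set in the lemma is empty. Otherwise it equals $\{w:|w|<cq_{n_k+1},\ N(x,s,w)=j^\ast\}$, which is the sojourn interval $[S_{j^\ast}(f)(x)-s,\ S_{j^\ast+1}(f)(x)-s)$ of index $j^\ast$ intersected with $(-cq_{n_k+1},cq_{n_k+1})$, hence an interval. Since $\delta_k<\|x\|$ forces $j^\ast\neq 0$: if $j^\ast\ge 1$ then $S_{j^\ast}(f)(x)\ge f(x)>s$, so this sojourn interval lies in $(0,\infty)$ and the set is a subinterval of $(0,cq_{n_k+1}]$; if $j^\ast\le -1$ then $S_{j^\ast+1}(f)(x)\le S_0(f)(x)=0$, so the sojourn interval lies in $(-\infty,0)$ and the set is a subinterval of $[-cq_{n_k+1},0)$.

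The only slightly delicate point is the ``at most one bad index'' step — one must keep the constants aligned so that $2\delta_k=\tfrac12 q_{n_k+1}^{-1}$ stays strictly below the minimal gap $\|q_{n_k}\alpha\|$ (here $q_{n_k+1}>q_{n_k}$ suffices, so no use of $q_{n_k+1}\ge e^{q_{n_k}}$ is even needed). Everything else — the index bound via $f\ge\inf_\T f$ and the identification of the hitting set with a single one-sided sojourn interval — is routine bookkeeping with the special-flow formula, so I do not anticipate a serious obstacle.
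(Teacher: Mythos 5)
Your proof is correct and follows essentially the same route as the paper's: bound $|N(x,s,w)|$ using $f\ge\inf_\T f$, then use the three-distance/best-approximation bound $\|m\alpha\|\ge\|q_{n_k}\alpha\|>\tfrac12 q_{n_k+1}^{-1}$ for $0<|m|<q_{n_k+1}$ to show the base orbit hits $[-\tfrac14 q_{n_k+1}^{-1},\tfrac14 q_{n_k+1}^{-1}]$ at most once in the relevant index range. The one place where you are actually more careful than the printed proof is the final ``one-sided'' clause: the paper only concludes the set is a single interval, while you explicitly note that for $k$ large enough (using $x\neq0$, which is the right reading since the segment through the singularity is excluded by Remark~\ref{rem:ret}) the bad index cannot be $j^\ast=0$, so the sojourn interval avoids $w=0$ and therefore lies entirely in one of $[-cq_{n_k+1},0)$ or $(0,cq_{n_k+1}]$. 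That is the intended argument and a genuine (if minor) gap in the paper's exposition that you have filled.
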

\begin{proof} Let $c:=\frac{\inf_\T f}{16}$. Recall that the first coordinate of  $T_{w}(x,s)$ is equal to $x+N(x,s,w)\alpha$. Moreover, 
$$
(\inf_\T f)|N(x,s,w)|\leq |S_{N(x,s,w)}(f)(x)|\leq| w|+s\leq s+cq_{n_k+1}\leq 2cq_{n_k+1},
$$
($s$ is fixed and $n_k\to +\infty$). Therefore, 
\be\label{eq:nxsw}
|N(x,s,w)|<q_{n_k+1}/8
\ee
Note that every connected component of 
$$\{T_{w}(x,s)\}_{|w|\leq cq_{n_k+1}} \cap \{(y,r)\in \T^f\;:\; \|y\|<\frac{1}{4}q_{n_k+1}^{-1}\}$$
 starts by visiting $[-\frac{1}{4}q_{n_k+1}^{-1},\frac{1}{4}q_{n_k+1}^{-1}]$. Note that for any $i_1,i_2\in [-\frac{q_{n_k+1}}{4},\frac{q_{n_k+1}}{4}]$, 
$$
\|(x+i_1\alpha)-(x+i_2\alpha)\|=\|(i_1-i_2)\alpha\|> \frac{1}{2q_{n_k+1}}
$$
since $|i_1-i_2|\leq q_{n_k+1}/2<q_{n_k+1}$. Therefore and by the bound on $N(x,s,w)$ (see \eqref{eq:nxsw}) the above orbit visits the set $[-\frac{1}{4q_{n_k+1}},\frac{1}{4q_{n_k+1}}]$ at most once. Hence indeed the intersection is one interval (might be empty).


\end{proof}
\begin{proposition}\label{prop:fos} If \eqref{eq:tz} holds for $t=T$ and $z\in\{+,-\}$, then \eqref{eq:forback} holds for $T$ (with the same $z$).
\end{proposition}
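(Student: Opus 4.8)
\textbf{Proof proposal for Proposition \ref{prop:fos}.}

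The plan is to reduce the time-integral $\int_0^T \psi(T^\alpha_{z\cdot t}(x,s))\,dt$ to a Birkhoff sum for the base rotation with respect to an explicit $\mathrm{BV}$ roof-weighted observable, and then apply the equidistribution estimate in Lemma \ref{lem:ase}. Concretely, since \eqref{eq:tz} guarantees that the orbit segment $\{T^\alpha_{z\cdot w}(x,s)\}_{w\le T}$ stays away from the set $\{\|y\|<\tfrac14 q_{n_k+1}^{-1}\}$, by Remark \ref{rem:ret} every such point has a unique representative in $\T^f$, and by Lemma \ref{lem:ad} we have $N(x,s,z\cdot T)=z\cdot T + o(T)$ and $z\cdot T = S_{N(x,s,z\cdot T)}(f)(x)+o(T)$. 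Writing $M:=N(x,s,z\cdot T)$ (so $|M|=(1+o(1))|T|/\!\int_\T f$, but in fact here $\int_\T f=1$ so $|M|=(1+o(1))T$), one unfolds the flow integral along the tower: for $z=+$,
\[
\int_0^{T}\psi(T^\alpha_{t}(x,s))\,dt = \int_s^{S_1(f)(x)}\psi(x,r)\,dr + \sum_{j=1}^{M-1}\int_0^{f(x+j\alpha)}\psi(x+j\alpha,r)\,dr + O\big(f(x+M\alpha)\big),
\]
where the last error is $O(q_{n_k+1}^{1+\gamma})=o(T)$ by \eqref{eq:asu} and \eqref{eq:tz}, and similarly (with the orientation reversed) for $z=-$. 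Define $G(y):=\int_0^{f(y)}\psi(y,r)\,dr$. Then the main term is $S_{|M|}(G)(x')+O(1)$ for the appropriate base point $x'$ ($x$ or $x+M\alpha$ depending on the sign of $z$).

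The next step is to check that $G\in\mathrm{BV}(\T)$, so that Lemma \ref{lem:ase} applies. Away from $0$ the function $f$ is $C^2$ and $\psi$ is continuous, so $G$ is continuous there; near $0$, using property iii. of $\bar\psi$ one has $\psi(y,r)\to\psi_\infty$ uniformly on $\{\|y\|<\delta,\ r\ge 1/\delta\}$, hence $G(y)=\psi_\infty f(y) + o(f(y))$ as $y\to 0^{\pm}$, and in particular $G$ has one-sided limits $G(0^{\pm})$ — note $f$ blows up, so strictly $G$ is not bounded, but $G/f$ is, and the Denjoy--Koksma argument behind Lemma \ref{lem:ase} applies to $\psi$-weighted roof integrals exactly as in Lemma \ref{lem:DK} (this is the standard way ergodic integrals of Kochergin special flows are handled). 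The cleanest route is: apply Lemma \ref{lem:ase} to the bounded part and handle the $\psi_\infty f$ part via Lemma \ref{lem:DK} together with the no-return hypothesis \eqref{eq:tz} to bound $x_{M,\min}^\gamma=O(q_{n_k+1}^{-\gamma})=o(T)$ — exactly as in the proof of Lemma \ref{lem:ad}. This yields $S_{|M|}(G)(x')=|M|\int_\T G\,d\mathrm{Leb}+o(|M|)=|M|\int_{\T^f}\bar\psi\,d\mathrm{Leb}^f+o(T)$, since $\int_\T G\,d\mathrm{Leb}=\int_{\T^f}\bar\psi\,d\mathrm{Leb}^f$ (Fubini, and $\int_\T f=1$ makes $\mathrm{Leb}^f$ a probability measure). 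Combining with $|M|=(1+o(1))T$ and $\|\psi\|_\infty<\infty$ gives \eqref{eq:forback}.

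The main obstacle is the last integral $\int_0^{f(x+M\alpha)}$ and, more generally, controlling the $\psi$-weighted ergodic sums of the unbounded roof $f$ near the singularity: one must ensure that the orbit, under the no-return hypothesis \eqref{eq:tz}, does not come closer to $0$ than $\tfrac14 q_{n_k+1}^{-1}$, so that every $f(x+j\alpha)$ appearing is $O(q_{n_k+1}^{1+\gamma})$ and the sum $\sum_{j<|M|}(f(x+j\alpha)-1)$ is $o(T)$; this is precisely where Lemma \ref{lem:DK}, Remark \ref{rem:DK} and \eqref{eq:f} are invoked, as in Lemma \ref{lem:ad}. Once this quantitative closeness-to-the-tower estimate is in place, the rest is the routine Birkhoff/Denjoy--Koksma argument of Lemma \ref{lem:ase}. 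I would write the $z=+$ case in full and remark that $z=-$ is identical after reversing the orientation of the tower (replacing the base point $x$ by $x+M\alpha$ and $f$-sums forward by $f$-sums backward, which is legitimate by Remark \ref{rem:ret}).
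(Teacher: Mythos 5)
Your overall plan --- unfold the flow integral over $[0,T]$ into a Birkhoff sum of $Z(y)=\int_0^{f(y)}\psi(y,r)\,dr$ (your $G$) over the base rotation, control the discrete time $N(x,0,T)$ versus $T$ and the endpoint overshoot $f(x+N(x,0,T)\alpha)$ via Lemma \ref{lem:ad} and the no-return hypothesis \eqref{eq:tz}, and then appeal to Lemma \ref{lem:ase} --- is exactly the paper's. The gap is in your handling of the unboundedness of $Z$: you want to split $Z$ into a bounded BV part plus $\psi_\infty f$, treating the first with Lemma \ref{lem:ase} and the second with Lemma \ref{lem:DK}. But $Z-\psi_\infty f$ is \emph{not} bounded: property iii only gives, for each $\varepsilon>0$, a $\delta>0$ with $|Z(y)-\psi_\infty f(y)|\le C_\delta+\varepsilon f(y)$ for $\|y\|<\delta$; the remainder is $o(f(y))$ as $y\to 0$ but still diverges. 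You flag this yourself (``strictly $G$ is not bounded, but $G/f$ is'') but then only gesture at the claim that Denjoy--Koksma ``applies to $\psi$-weighted roof integrals.'' As written, Lemma \ref{lem:ase} is not applicable to $Z-\psi_\infty f$, and the paper has no variant of Lemma \ref{lem:DK} for general $o(f)$ perturbations of $f$.

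The paper closes this by cutting off in time instead of splitting the observable. Writing $N=N(x,0,T)$, it restricts the integral to $F(x,\delta)=\{t\le S_{N}(f)(x)\;:\;T^\alpha_t(x,0)\in X_\delta\}$, the set of times at which the horizontal coordinate is $\delta$-far from $0$. On this set the unfolded integral equals $\sum_{j<N}Z\cdot\chi_{[\delta,1-\delta]}(x+j\alpha)$, and $Z\cdot\chi_{[\delta,1-\delta]}$ \emph{is} BV because $f$ is bounded on $[\delta,1-\delta]$, so Lemma \ref{lem:ase} applies directly; its mean $\int_\T Z\chi_{[\delta,1-\delta]}\,d{\rm Leb}$ is small for small $\delta$ since $\int_\T Z\,d{\rm Leb}=\int_{\T^f}\bar\psi\,d{\rm Leb}^f=0$ and $|Z|\le\|\psi\|_\infty f\in L^1(\T)$. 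The complementary set $[0,S_{N}(f)(x)]\setminus F(x,\delta)$ has relative measure at most $\varepsilon$ by \eqref{eq:fdel} (proved from Lemma \ref{lem:ase} applied to $f\cdot\chi_{(\delta,1-\delta)}$ together with Lemma \ref{lem:ad}), and since $|\psi|\le\|\psi\|_\infty$ its contribution is $O(\varepsilon T)$. Replace your split of $Z$ by this cut-off argument and the proof goes through; the rest of your plan (the $O(s)$ and $O(f(x+N\alpha))$ error terms, the Fubini identity $\int_\T Z\,d{\rm Leb}=\int_{\T^f}\bar\psi\,d{\rm Leb}^f$, and the symmetry in $z$) matches the paper.
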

Before we prove the above proposition, let us see how it immediately implies Proposition \ref{prop:forback}.
\begin{proof}[Proof of Proposition \ref{prop:forback}]
 Let $c$ be smaller from the two constants in Lemma \ref{lem:ad} and Lemma \ref{lem:sas}. Fix $t\in [\frac{q_{n_k+1}}{\log n_k}, cq_{n_k+1}]$. By Lemma \ref{lem:sas}, one of the semi-orbits $\{T_{w}(x,s)\}_{w\leq cq_{n_k+1}}$ or $\{T_{-w}(x,s)\}_{w\leq cq_{n_k+1}}$ is disjoint with  $\{(y,r)\in \T^f\;:\; \|y\|<\frac{1}{4}q_{n_k+1}^{-1}\}$. So \eqref{eq:tz} holds either for $z=+$ or $-$. Then statement then follows by Proposition \ref{prop:fos}.
\end{proof}

\begin{proof}[Proof of Proposition \ref{prop:fos}]
We can WLOG assume that $\int_{\T^f}\psi\;d\,{\rm Leb}^f=0$. 
$$
\int_0^T\psi(T^\alpha_t(x,s))dt=
\int_0^T\psi(T^\alpha_t(x,0))dt+{\rm O}(s),
$$
and since $s$ is fixed, it is enough to estimate the integral on the RHS. Note that 
\begin{equation}\label{eq:inter}
\int_0^T\psi(T^\alpha_t(x,0))dt=\int_0^{S_{N(x,0,T)}(f)(x)}\psi(T^\alpha_t(x,0))dt+\int_{S_{N(x,0,T)}(f)(x)}^T\psi(T^\alpha_t(x,0))dt.
\end{equation}

By Lemma \ref{lem:ad}, $0\leq T-S_{N(x,0,T)}(f)(x)\leq f(x+N(x,0,T)\alpha)\ll f(\frac{1}{q_{n_k+1}})=o(T)$ and hence the second summand is negligible.

For $\delta>0$, let $X_\delta:=\{(x,s)\in \T^f\;:\; \|x\|>\delta\}$ and let $F(x,\delta):=\{t\leq S_{N(x,0,T)}(f)(x) \;:\; T^\alpha_t(x,0)\in X_\delta\}$. We claim that for every $\epsilon>0$ there exists $\delta>0$ and $T_\delta>0$ such that for $T\geq T_\delta$,
\be\label{eq:fdel}
|F(x,\delta)|\geq(1-\epsilon)S_{N(x,0,T)}(f)(x).
\ee
Indeed, consider the function $\bar{f}(x)=\chi_{(\delta,1-\delta)}\cdot f(x)$. Then notice that 
$$
|F(x,\delta)|\geq \sum_{i=0}^{N(x,0,T)}\bar{f}(x+i\alpha)
$$
Then by Lemma \ref{lem:ase},
$$
\sum_{i=0}^{N(x,s,T)}\bar{f}(x+i\alpha)=N(x,s,T)\int_{\T} \bar{f} d {\rm Leb}+o\Big(N(x,s,T)\Big).
$$
Note that for every $\epsilon>0$ there exists $\delta$ such that  $\int_{\T} \bar{f} d {\rm Leb}\geq 1-\epsilon/4$. Moreover, by Lemma \ref{lem:ad},
$$N(x,s,T)\geq T-o(T)\geq (1-\epsilon/4)S_{N(x,s,T)}(f)(x).$$ 

In particular it follows by \eqref{eq:fdel} that it is enough to estimate the integral 
$$
\int_0^{S_{N(x,s,T)}(f)(x)}\psi(T^\alpha_t(x,s))dt
$$
 restricted to the set $F(x,\delta)$.
 Let $Z(x):=\int_{0}^{f(x)}\psi(T_t(x,0))dt$. Then the integral $
\int_0^{S_{N(x,0,T)}(f)(x)}\psi(T^\alpha_t(x,0))dt$ restricted to the set $F(x,\delta)$ us equal to 
$$
\sum_{i=0}^{N(x,0,T)-1}Z\cdot \chi_{[\delta,1-\delta]}(x+i\alpha).
$$
It now remains to notice that the function $Z\cdot \chi_{[\delta,1-\delta]}$ is of bounded variation, and so by Lemma \ref{lem:ase}, 
$$
\sum_{i=0}^{N(x,0,T)-1}Z\cdot \chi_{[\delta,1-\delta]}(x+i\alpha)=o(N(x,0,T)),
$$
and $N(x,0,T)\leq 2T$ by Lemma \ref{lem:ad}. This finishes the proof.
\end{proof}

\subsection{Proof of Theorem \ref{thm:main2}}
In this section we assume that $\alpha \in \cD$ and $f$ has power singularity with exponent $\gamma$ (see \eqref{eq:asu}). Let $(q_{n_k})$ be the sequence coming from the fact that $\alpha\in \cD$. 
In this section we assume that the function $\bar{\psi}$ satisfying i.-iii. is fixed. For simplicity of notations we denote it by $\psi$.

To simplify notation we drop $\alpha$ and $\gamma$ from the notation for $(T_t^{\alpha,\gamma})$. We start with the following lemma:
\begin{lemma}\label{lem:navi} Let $\delta>0$ be such that $-\gamma(1+\delta)<1$. Then for
$$ 
x\notin\bigcup_{i=0}^{q_{n_k}-1}\Big(\Big[-\frac{1}{q_{n_k}^{1+\delta}}, \frac{1}{q_{n_k}^{1+\delta}}\Big]-i\alpha\Big), 
$$
and any $\ell<2q_{n_k}$, we have
$$
\int_{0}^{S_{q_{n_k}}(f)(x)}\psi(T_{t+\ell}(x,0))=o(S_{q_{n_k}}(f)(x)).
$$
\end{lemma}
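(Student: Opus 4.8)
The plan is to exploit the fact that, away from a small neighbourhood of the singularity, the return time $S_{q_{n_k}}(f)(x)$ is comparable to $q_{n_k}$, so the integral $\int_0^{S_{q_{n_k}}(f)(x)}\psi(T_{t+\ell}(x,0))\,dt$ is (up to the shift by $\ell$, and an $O(\ell)$ boundary correction of size $o(q_{n_k})$) a sum of $q_{n_k}$ ``fibre integrals'' $Z(x+i\alpha):=\int_0^{f(x+i\alpha)}\psi(T_t(x+i\alpha,0))\,dt$ over $i<q_{n_k}$, possibly with the first and last fibre truncated. First I would use the hypothesis $x\notin\bigcup_{i<q_{n_k}}([-q_{n_k}^{-1-\delta},q_{n_k}^{-1-\delta}]-i\alpha)$ together with the power-singularity estimate \eqref{eq:asu} to bound each $f(x+i\alpha)\ll q_{n_k}^{(1+\delta)(-\gamma)}$, and hence, using $-\gamma(1+\delta)<1$, to conclude $S_{q_{n_k}}(f)(x)=o(q_{n_k}^{2})$ is far too weak — what I actually want is that the contribution of the single nearest-approach fibre is $o(S_{q_{n_k}}(f)(x))$, which follows since that fibre has length $\ll q_{n_k}^{-\gamma(1+\delta)}=o(q_{n_k})$ while $S_{q_{n_k}}(f)(x)\geq q_{n_k}\inf_\T f\gg q_{n_k}$. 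The shift by $\ell<2q_{n_k}$ only moves the window by at most $2q_{n_k}$ fibres, each again of length $\ll q_{n_k}^{-\gamma(1+\delta)}$ away from the singular one, so this likewise contributes $o(S_{q_{n_k}}(f)(x))$ once we absorb at most $O(1)$ singular fibres among them (there are at most two indices $i$ with $\|x+i\alpha\|\leq q_{n_k}^{-1}$ in a window of length $\ll q_{n_k}$, by the three-distance/spacing bound, and the hypothesis excludes even those from being too close).

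Next I would reduce the integral to an ergodic sum over the rotation. After discarding the $O(1)$ truncated or singular fibres, the main term is $\sum_{i=j_0}^{j_0+q_{n_k}-1} Z(x+i\alpha)$ for an appropriate starting index $j_0$ determined by $\ell$. The key point is that $Z$ is integrable and in fact of bounded variation once restricted away from the singularity: writing $Z=Z\cdot\chi_{[\delta_0,1-\delta_0]}+Z\cdot\chi_{[-\delta_0,\delta_0]}$ for a small fixed $\delta_0$, the first piece is $\mathrm{BV}(\T)$ (here one uses that $\psi$ satisfies i.--iii., so $\bar\psi$ is bounded and the flow is smooth on the complement of a neighbourhood of $x_0$, making $x\mapsto Z(x)$ Lipschitz there), and the second piece has $L^1$-norm that can be made arbitrarily small — this is exactly the argument already used in the proof of Proposition \ref{prop:fos}. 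Applying Lemma \ref{lem:ase} to the $\mathrm{BV}$ piece gives $\sum_{i=j_0}^{j_0+q_{n_k}-1}(Z\chi_{[\delta_0,1-\delta_0]})(x+i\alpha)=o(q_{n_k})$ (after splitting the sum of length $q_{n_k}$, possibly starting at $j_0$, into at most two blocks of consecutive integers and applying Lemma \ref{lem:ase} to each via the cocycle identity as in that lemma's proof). For the near-singularity piece, Denjoy--Koksma (or just the spacing estimate, since $\{x+i\alpha\}_{i<q_{n_k}}$ is $\tfrac1{q_{n_k}}$-separated, combined with the integrability of $Z$ near $x_0$, which holds because $\int_0^\delta f\,dx<\infty$) gives that $\sum_{i}|Z\chi_{[-\delta_0,\delta_0]}|(x+i\alpha)\leq \epsilon q_{n_k}$ for $\delta_0$ small, uniformly in $x$ and in the length-$q_{n_k}$ block. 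Combining, the whole sum is $o(q_{n_k})=o(S_{q_{n_k}}(f)(x))$.

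The main obstacle I expect is bookkeeping the boundary/truncation terms cleanly: the shift by $\ell$ means the window $[\ell, \ell+S_{q_{n_k}}(f)(x)]$ does not align with the fibre decomposition, so one must carefully identify the at most two partial fibres at the ends and the finitely many genuinely singular fibres in between, and check each contributes $o(S_{q_{n_k}}(f)(x))$; the hypothesis on $x$ (excluding the $q_{n_k}^{-1-\delta}$-neighbourhood of the singular orbit) together with $-\gamma(1+\delta)<1$ is precisely calibrated so that a worst-case partial fibre has length $\ll q_{n_k}^{-\gamma(1+\delta)}=o(q_{n_k})$, but one must also invoke property iii.\ of $\psi$ to control $\psi$ on the very long fibres near $x_0$ — there $\psi$ is within $\epsilon$ of $\psi_\infty=\psi(x_0)$, so the contribution of a fibre of length $f(x+i\alpha)$ is $(\psi_\infty+O(\epsilon))f(x+i\alpha)$, and the $\psi_\infty$-weighted sum is handled again by Lemma \ref{lem:ase} applied to $f$ itself (which is only $L^1$, not $\mathrm{BV}$, so one again splits $f=f\chi_{[\delta_0,1-\delta_0]}+f\chi_{[-\delta_0,\delta_0]}$ and uses Denjoy--Koksma for the singular part). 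None of these steps is deep, but assembling them into a clean $o(S_{q_{n_k}}(f)(x))$ bound uniform in $\ell<2q_{n_k}$ and in the admissible $x$ is where the care lies.
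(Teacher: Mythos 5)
Your proposal is correct as an approach and would, with the bookkeeping carried out, prove the lemma, but it takes a noticeably more pedestrian route than the paper. The paper does not track the misaligned window $[\ell,\ell+S_{q_{n_k}}(f)(x)]$ fibre by fibre. Instead it changes the base point: writing $T_\ell(x,0)=(x+n_\ell\alpha,s_\ell)$, the shifted integral becomes $\int_0^{S_{q_{n_k}}(f)(x)}\psi(T_t(x+n_\ell\alpha,s_\ell))\,dt$, and then the upper limit is replaced by $S_{q_{n_k}}(f)(x+n_\ell\alpha)$ at the cost of an $o(1)$ error. The two Birkhoff sums are compared via the cocycle identity $S_{q_{n_k}}(f)(x)-S_{q_{n_k}}(f)(x+n_\ell\alpha)=S_{n_\ell}(f)(x)-S_{n_\ell}(f)(x+q_{n_k}\alpha)$, the mean value theorem, the tiny step $\|q_{n_k}\alpha\|<e^{-q_{n_k}}$, and the bound $|S_{n_\ell}(f')(\theta)|\ll q_{n_k}^4$ coming from the hypothesis on $x$. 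With the adjusted time horizon the integral covers exactly $q_{n_k}$ complete fibres over $x+n_\ell\alpha$, and the rest is handled verbatim by the Proposition \ref{prop:fos} argument (which is the BV-plus-truncation argument you also invoke). This reduction buys exactly what you identify as the main obstacle: it removes the partial-fibre accounting and the $q_{n_k}\pm O(1)$ ambiguity in the number of complete fibres in the displaced window, at the price of a single mean-value estimate.

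One point you should make explicit in your version: the hypothesis on $x$ only constrains $x+i\alpha$ for $0\leq i<q_{n_k}$, while after the shift you are summing $Z$ over indices in $[n_\ell,n_\ell+q_{n_k})$, and $n_\ell$ can be as large as $2q_{n_k}$. To conclude that these shifted orbit points also avoid $[-\tfrac12 q_{n_k}^{-1-\delta},\tfrac12 q_{n_k}^{-1-\delta}]$ (and hence that the corresponding fibre heights $f(x+i\alpha)$ are $\ll q_{n_k}^{-\gamma(1+\delta)}$), you must use the near-periodicity $\|q_{n_k}\alpha\|<e^{-q_{n_k}}\ll q_{n_k}^{-1-\delta}$: write $i=jq_{n_k}+i'$ with $j\leq 2$, $i'<q_{n_k}$, so $x+i\alpha$ is within $2\|q_{n_k}\alpha\|$ of $x+i'\alpha$, and the hypothesis then transfers. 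You gesture at this (``the hypothesis excludes even those from being too close'') but it is the crucial use of the large spectral gap $q_{n_k+1}\geq e^{q_{n_k}}$ built into $\cD$, and it is also precisely the step the paper carries out when it asserts $\theta\notin\bigcup_{i<q_{n_k}}\bigl([-\tfrac12 q_{n_k}^{-1-\delta},\tfrac12 q_{n_k}^{-1-\delta}]-i\alpha\bigr)$, so it deserves to be spelled out rather than absorbed into a spacing remark.
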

\begin{proof}Let $T_{\ell}(x,0)=(x+n_\ell \alpha,s_\ell)$. Then 
$$
|S_{q_{n_k}}(f)(x)-S_{q_{n_k}}(f)(x+n_\ell\alpha)|=|S_{n_\ell}(f)(x)-
S_{n_\ell}(f)(x+q_{n_k}\alpha)|.
$$
By the assumption on $x$ and since $q_{n_k+1}\geq e^{q_{n_k}}$,
$$
|S_{n_\ell}(f)(x)-S_{n_\ell}(f)(x+q_{n_k}\alpha)|=\frac{1}{q_{n_k+1}}|S_{n_\ell}(f')(\theta)|
$$
for some $\theta\in [x,x+q_{n_k}\alpha]$, where (since $\|q_{n_k}\alpha\|<e^{-q_{n_k}}$),
$$
\theta\notin\bigcup_{i=0}^{q_{n_k}-1}\Big(\Big[-\frac{1}{2q_{n_k}^{1+\delta}}, \frac{1}{2q_{n_k}^{1+\delta}}\Big]-i\alpha\Big).
$$
Since $n_\ell\leq 2q_{n_k}$, $|S_{n_\ell}(f')(\theta)|\ll q_{n_k}f(\theta_{q_{n_k},min})\leq q_{n_k}^4$ (the last inequality by the restriction on $\theta$ above). Therefore, 
$$
|S_{q_{n_k}}(f)(x)-S_{q_{n_k}}(f)(x+n_\ell\alpha)|=o(1), 
$$
and so it is enough to estimate the integral
$$ 
\int_{0}^{S_{q_{n_k}}(f)(x+n_\ell\alpha )}\psi(T_{t}(x+n_\ell \alpha,s_\alpha)).
$$
This now follows the same  steps as estimating the first integral in \eqref{eq:inter}.
\end{proof}

We restate Theorem \ref{thm:main2} to shorten the notation:
\begin{proposition}\label{prop:pri} There exists $c>0$ such that for $N_k\in [\frac{q_{n_k+1}}{\log k}, cq_{n_k+1}]$, for every $z\in\{+,-\}$ and $m\geq 0$,
\begin{equation}\label{eq:nk}
\Big|\sum_{p\leq N_k}\psi(T_{z\cdot (p-m)}(x,s))\log p-\int_{0}^{N_k}\psi(T_{z\cdot t}(x,s))dt\Big|=o(N_k),
\end{equation}
for every $(x,s)\in \T^f$.
\end{proposition}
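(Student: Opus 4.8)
The plan is to compare, block by block along the orbit, the prime‑weighted sum with the integral: on each block the integral is negligible by a Denjoy--Koksma argument (exactly as in Proposition~\ref{prop:forback}), while the prime sum is negligible because, in suitable coordinates, the orbit restricted to a block is a quadratic polynomial trajectory in the prime variable whose leading coefficient is minor‑arc, so that the equidistribution of quadratic phases over primes in short intervals (Propositions~\ref{prop:cruc} and~\ref{prop:n1}) applies. First some reductions. By linearity and the prime number theorem a constant $\psi\equiv c$ contributes $cN_k+o(N_k)$ to the sum and $cN_k$ to the integral, so we may assume $\int_{\T^f}\psi\,d\,{\rm Leb}^f=0$. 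Using $T_w(x,s)=T_{w+s}(x,0)$ we may take $s=0$, absorbing the bounded shift into $m$. By Lemma~\ref{lem:sas}, for at least one sign $z$ the $z$‑semiorbit $\{T_{zw}(x,0)\}_{0\le w\le N_k}$ (recall $N_k\le cq_{n_k+1}$) stays in $\{\|y\|\ge\tfrac14 q_{n_k+1}^{-1}\}$, and we treat that sign as the main case; for the other sign the excursion of the orbit into $\{\|y\|<\tfrac14 q_{n_k+1}^{-1}\}$ is a single flow‑time interval on which the orbit sits inside the tower over one base level (two distinct levels $\le N(x,0,N_k)<q_{n_k+1}$ cannot both lie within $\tfrac12 q_{n_k+1}^{-1}$ of $0$), and there property~iii.\ makes both the prime sum and the integral equal $\psi_\infty$ times the elapsed time up to $o(N_k)$ (prime number theorem / Brun--Titchmarsh, Lemma~\ref{lem:fff}), so they match, while the complementary flow‑time falls under the main case with a shift absorbed into $m$.

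In the main case write $Q=q_{n_k}$, $Q'=q_{n_k+1}$, and fix $k=k(Q')$ in a range where Lemma~\ref{lem:appr} applies with a genuinely small error term and where the resulting blocks are long enough, relative to their position near $Q'$, for Proposition~\ref{prop:cruc} to apply; this is exactly what the conditions defining $\cD$ (in particular $Q'\ge e^Q$, $Q'\ge H_0(Q)^2$ and $Q'[\log Q']^{-100/\theta(Q)}\ge Q^{100/\theta(Q)}$) together with the Remark after Lemma~\ref{lem:appr} (which forces $k<(Q')^{2/3}$) are designed to guarantee. Partition the range of base levels $[0,N(x,0,N_k))$ into consecutive blocks of $kQ$ levels; block $\beta$ occupies a flow‑time interval $B_\beta$ of length $\Sigma_\beta=S_{kQ}(f)(x+\beta kQ\alpha)\asymp kQ$, and the $B_\beta$ cover $[0,N_k]$. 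Call $\beta$ rigid if its $kQ$ base points avoid a fixed neighbourhood of $0$ of radius $\asymp(Q')^{-1}$, and set $x_\beta=x+\beta kQ\alpha$; by the three‑distance theorem and $N(x,0,N_k)<Q'$ only $O(1)$ blocks fail to be rigid, and each has length $\ll kQ+(Q')^{|\gamma|}=o(N_k)$, so by $\sum_{p\in I}\log p\ll\max(|I|,N_k^{2/3})$ (Remark~\ref{rem:prim}) excising them costs $o(N_k)$ in both quantities. Hence it suffices (the $O(1)$ blocks near flow‑time $0$ being bounded trivially) to prove, uniformly over rigid blocks, that $\bigl|\sum_{p\in B_\beta}\psi(T_p(x,0))\log p\bigr|=o(\Sigma_\beta)$ and $\bigl|\int_{B_\beta}\psi(T_t(x,0))\,dt\bigr|=o(\Sigma_\beta)$. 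The second is the easy half: $\int_{B_\beta}\psi(T_t(x,0))\,dt=S_{kQ}(\Psi)(x_\beta)$ with $\Psi(y)=\int_0^{f(y)}\psi(y,u)\,du$, and splitting $\Psi=\psi_\infty f+(\Psi-\psi_\infty f)$ and using property~iii.\ together with the Denjoy--Koksma / Ostrowski estimate of Lemma~\ref{lem:ase} exactly as in the proof of Proposition~\ref{prop:fos} gives $S_{kQ}(\Psi)(x_\beta)=o(kQ)$, uniformly.

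For the prime sum over a rigid block one passes to the torus coordinates, where the Kochergin orbit is a time‑change of a line in direction $(\alpha,1)$, so $\psi(T_p(x,0))=\sum_{(a,b)}\widehat\psi(a,b)\,e(ax)\,e((a\alpha+b)v(p))$ with $v(p)$ the (fractional) level reached at flow‑time $p$; by Lemma~\ref{lem:appr} applied from $x_\beta$, $v(p)$ is, up to a small error on $B_\beta$, an affine‑plus‑quadratic polynomial in $p$ whose quadratic coefficient $\kappa_\beta$ is proportional to $S_Q(f')(x_\beta)\,(Q\alpha)$. Since $\psi$ is continuous and the orbit stays at height $\ll(Q')^{|\gamma|}$ on a rigid block, finitely many modes $|a|,|b|\le M(\varepsilon)$ suffice to accuracy $\varepsilon$, and for each mode $(a,b)\ne(0,0)$ one is left with $\sum_{p\in B_\beta}e(\gamma_1 p+\gamma_2 p^2)\log p$, $\gamma_2=-(a\alpha+b)\kappa_\beta$; if $\gamma_2$ satisfies the minor‑arc condition~\eqref{eq:q2} at the scale of $B_\beta$, Proposition~\ref{prop:n1} (with the parameter $\eta$ taken small in terms of $\varepsilon$, which is legitimate thanks to the lower bound on $Q'$ built into $\cD$), equivalently Proposition~\ref{prop:cruc}, bounds it by $\le\varepsilon\Sigma_\beta$, and summing over modes and letting $\varepsilon\to0$ yields $\bigl|\sum_{p\in B_\beta}\psi(T_p(x,0))\log p\bigr|=o(\Sigma_\beta)$ on every rigid block on which $\gamma_2$ is minor‑arc for all $|a|,|b|\le M$. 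It remains to control the rigid blocks on which some $\gamma_2=-(a\alpha+b)\kappa_\beta$ is major‑arc: since $S_Q(f)(x_\beta)$ varies only by $O(Q^{|\gamma|})$ over $\beta$ while $Q\alpha$ is fixed, $\kappa_\beta$ is essentially a fixed multiple of $S_Q(f')(x_\beta)$, and $S_Q(f')$ is monotone with controlled derivative on each of the $Q$ intervals into which $\{-i\alpha:i<Q\}$ partitions $\T$ (as in the proof of Lemma~\ref{lem:smallder}); so for each mode the measure of $x_\beta$ with $\gamma_2$ in one of the $\ll(\log Q')^{O(1)}$ bad arcs is $\ll Q\,(\log Q')^{O(1)}\cdot(\text{arc width})$, and since $Q\asymp\log Q'$ is negligible against powers of $Q'$ this translates, via the spacing $\asymp(kQ)^{-1}$ of the $x_\beta$, into $O(1)$ bad blocks of total length $\ll(Q')^{|\gamma|}=o(N_k)$. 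Putting everything together, $\sum_{p\le N_k}\psi(T_{z(p-m)}(x,s))\log p-\int_0^{N_k}\psi(T_{zt}(x,s))\,dt=o(N_k)$.

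The step I expect to be the main obstacle is the calibration in the second paragraph together with the extraction of a clean quadratic phase in the third: choosing the block scale $k$ so that the error term in Lemma~\ref{lem:appr} is genuinely $o(\Sigma_\beta)$ on the block while the short interval $B_\beta$ is still long enough, relative to its position near $Q'$, for Proposition~\ref{prop:cruc}, and then verifying the minor‑arc condition~\eqref{eq:q2} for all but a negligible collection of blocks. Both rely essentially on the very fast growth $q_{n_k+1}\ge e^{q_{n_k}}$ that is built into the definition of $\cD$.
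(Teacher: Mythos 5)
The overall architecture of your proposal matches the paper's: reduce to mean zero, use the one semi-orbit staying away from the singularity, chop the flow time into blocks of fixed combinatorial length $\asymp kQ$ levels, approximate the return-time sum on each block by a quadratic (Lemma~\ref{lem:appr}), verify the minor-arc condition for the quadratic coefficient, apply Proposition~\ref{prop:cruc}, and control the exceptional blocks via Lemma~\ref{lem:smallder}. However there is a genuine gap in the way you separate the singular and non-singular parts of the orbit, and it is not cosmetic.

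You declare a block \emph{rigid} when its $kQ$ base points avoid a neighbourhood of $0$ of radius $\asymp(Q')^{-1}$. After discarding the single non-rigid block (which is indeed the only one, by the three-distance argument of Lemma~\ref{lem:sas}), you then run the quadratic-phase analysis on every remaining block. But Lemma~\ref{lem:appr} applied from $x_\beta$ requires the whole block to miss $[-1/L,1/L]$, and its error term is $O\big(L^3q_{n}^3k^3/q_{n+1}^2 + kL^2q_n^2/q_{n+1}\big)$. If the only guaranteed avoidance scale is $(Q')^{-1}=q_{n_k+1}^{-1}$, you are forced to take $L\sim q_{n_k+1}$, and then the error is $\gg q_{n_k+1}q_{n_k}^3k^3$, larger than the block itself; the lemma gives you nothing. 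Moreover the block length $\Sigma_\beta=S_{kQ}(f)(x_\beta)$ can be as large as $kQ+(Q')^{|\gamma|}$ on such a block, and for $\gamma$ close to $-1$ this is much larger than the $q_{n_k+1}^{2/3-\theta}$ scale you need. The paper fixes this by cutting at the much coarser scale $q_{n_k}^{-1-\delta}$ with $-\gamma(1+\delta)<1$ (the set $I_a$ in \eqref{eq:az}): on $I_b=\T\setminus I_a$ the Birkhoff sum over $Lq_{n_k}$ points is $\leq Lq_{n_k}+CLq_{n_k}^{-\gamma(1+\delta)}\asymp Lq_{n_k}$ (this is \eqref{eq:wssm}), and Lemma~\ref{lem:appr} can be run with $L\sim q_{n_k}^{1+\delta}$, making the error tiny. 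This coarser cut creates a genuinely intermediate region, namely the part of the orbit where $\|y\|$ is between $q_{n_k+1}^{-1}$ and $q_{n_k}^{-1-\delta}$: there the roof heights can already be large so Lemma~\ref{lem:appr} fails, but the orbit is not yet deep enough in the cusp for property~iii.\ to force $\psi\approx\psi_\infty$. Your proposal has no argument for this region at all. The paper's CLAIM ($A$ is one interval, $A_0$ is one interval, $|A\setminus A_0|=o(N_k)$, proved with the Denjoy--Koksma estimate for the truncated roof $f_k$ and the bound $N_k\geq q_{n_k+1}/\log n_k$) is precisely what disposes of it.

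Two smaller inaccuracies: the spacing of $x_\beta=x+\beta kQ\alpha$ is $\|kQ\alpha\|\asymp kQ/q_{n_k+1}$, not $(kQ)^{-1}$ as you write (this changes, though does not necessarily break, your count of major-arc blocks); and the count of blocks on which the quadratic coefficient is major-arc needs the quantitative structure of the zero set of $S_{q_{n_k}}(f')$ from Lemma~\ref{lem:smallder} together with the specific spacing of the $x_\beta$ along the $q_{n_k}$-tower — as it stands your ``$O(1)$ bad blocks of total length $\ll (Q')^{|\gamma|}$'' is an assertion, not a derivation. Your use of Fourier expansion on $\T^2$ in place of the paper's two-dimensional indicator decomposition of the roof interval is a legitimate stylistic alternative; the place where it would need care is exactly the intermediate region above, where the $\T^2$ arc-length $v(p)$ moves slowly and none of the quadratic approximations hold.
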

\begin{proof}Assume WLOG that $\int_{\T^f}\psi \,d\,{\rm Leb}= 0$. We will show \eqref{eq:nk} for $z=+$, the proof in case $z=-$ is symmetric and follows analogous lines. Note that 
$$
\sum_{p\leq N_k}\psi(T_{p-m}(x,s))\log p=\sum_{p-m\in [0,N_k]}\psi(T_{p-m}(x,s))\log p+{\rm O}(m\log N_k),
$$
and since $m\geq 0$ is fixed, it is enough to estimate the first sum.
Let $\delta>0$ be such that $-\gamma(1+\delta)<1$ (such $\delta$ exists since $\gamma\in (-1,0)$). Let
\begin{equation}\label{eq:az}
I_a:=\bigcup_{i=0}^{q_{n_k}-1}\Big(\Big[-\frac{1}{q_{n_k}^{1+\delta}}, \frac{1}{q_{n_k}^{1+\delta}}\Big]-i\alpha\Big),
\end{equation}
and $I_b:=\T\setminus I_a$. Let $I_a^f:=\{(y,s)\in \T^f\;:\; y\in I_a \}$ and analogously we define $I_b^f$.

 We split the interval $[0,N_k]$ into two disjoint subsets:
$$
A:=\{t\in [0,N_k]\;:\; T_{t}(x,s)\in I_a^f\}
$$
and 
$$
B:=\{t\in [0,N_k]: T_{t}(x,s)\in I_b^f\}.
$$
We assume WLOG that $q_{n_k}\alpha<0$ the reasoning in the other case is analogous.  Let $t_0\in [0,N_k]$ be the smallest for which $T_{t_0}(x,s)\in I_a^f$ and let $t_1\in [t_0,N_k]$ be the smallest for which $T_{t_1}(x,s)\notin I_a^f$. Let 
$$A_0:=\Big\{t\in [0,N_k]\;:\; T_{t}(x,s)\in \Big[- \frac{1}{4q_{n_k+1}},\frac{1}{4q_{n_k+1}} \Big]^f, s\geq k \Big\}.$$ We have the following:
\\
\\
\textbf{CLAIM:} \begin{itemize}
\item[P1.] $A=[t_0,t_1)$ and $B=[0,t_0)\cup [t_1,N_k]$;
\item[P2.] $A_0$ is an interval;
\item[P3.]$A\setminus A_0$ is a union of at most two intervals and $|A\setminus A_0|=o(N_{k})$.
\end{itemize}
Before we prove the {\bf CLAIM} let us show how it implies the proposition.

We then naturally split the integrals in \eqref{eq:nk} into integrals over $A_0$,  $A\setminus A_0$ and $B$. Notice that by P3. and Remark \ref{rem:prim} ($m$ is fixed),
$$
|\sum_{p-m\in A\setminus A_0}\psi(T_{p-m}(x,s))\log p|\ll  |\sum_{p-m\in A\setminus A_0}\log p|=o(N_k)
$$
and also $|\int_{A\setminus A_0}\psi(T_{t}(x,s))dt|=o(N_k)$.
 Therefore it remains to estimate the terms in \eqref{eq:nk} over $A_0$ and $B$. By definition of $A_0$ and iii. in the definition of $\psi$, for every $t\in A_0$ for which the vertical coordinate of $T_{t}(x,s)$ is $\geq \log N_k$, $\psi(T_t(x,s))=\psi_\infty+o(1)$. Moreover, since $A_0$ is an interval, the measure  of $t\in A_0$ (cardinality of $p-m\in A_0$) for which  the vertical coordinate of $T_{t}(x,s)$ (of $T_{p-m}(x,s)$) is $\leq \log N_k$ is bounded above by ${\rm O}(\log N_k)$. Hence 
$$
\sum_{p-m\in A_0}\psi(T_{p-m}(x,s))\log p=(\psi_\infty+o(1)\Big)\sum_{p-m\in A_0}\log p+{\rm O}(\log^2 N_k)
$$
and 
$$
\int_{A_0}\psi(T_{t}^{\alpha,\gamma}(x,s))dt=(\psi_\infty+o(1)\Big)|A_0|+{\rm O}(\log N_k).
$$
If $|A_0|\leq N_k^{9/10}$, then the above sum over primes above is $o(N_k)$ and the integral is also of order $o(N_k)$. On the other hand if the interval $A_0$ satisfies $A_0\geq N_{k}^{9/10}$, then by Lemma \ref{lem:shit}, the above sum over primes is equal to $\psi_{\infty}|A_0|+o(N_k)$. Therefore, 
$$
\Big|\sum_{p-m\in A_0}\psi(T_{p}(x,s))\log p-
\int_{A_0}\psi(T_{t}(x,s))dt\Big|=o(N_k),
$$
and hence it remains to estimate \eqref{eq:nk} over the set $B=[0,t_0)\cup [t_1,N_k]$. If the length of $[0,t_0)$ or $[t_1,N_k]$ is less than $\frac{N_k}{\log \log k}$, then analogously to the above reasoning, both the sum and the integral are of order $o(N_k)$ and hence such interval is negligible. Therefore it remains to prove the following: let $I\in \{[0,t_0),[t_1,N_k]\}$ be such that $|I|\geq \frac{N_k}{\log \log k}$, then
\begin{equation}\label{eq:10}
\Big|\sum_{p-m\in I}\psi(T_{p-m}(x,s))\log p-\int_{I}\psi(T_{t}(x,s))dt\Big|=o(|I|).
\end{equation}
We will argue with $I=[t_1,N_k)$, the estimates for $I=[0,t_0]$ are analogous (and slightly less technical since we start at $0$). By definition, $T_{t_1}(x,s)=(\tilde{x},0)$ (it is the first time we are outside $I_a^f$). Let $L:=q_{n_k+1}^{2/3-\theta(q_{n_k})}$. Since $k$ is fixed in what follows, we denote $\theta=\theta(q_{n_k})$.

 For $0\leq u\leq \frac{N_k}{L}$, consider the intervals  
 $$
 W_u:=[t_1+S_{uLq_{n_k}}(f)(\tilde{x}),t_1+ S_{(u+1)Lq_{n_k}}(f)(\tilde{x})).
 $$
  Notice that by cocycle identity, 
\begin{equation}\label{eq:jk}
|W_u|=S_{Lq_{n_k}}(f)(\tilde{x}+uLq_{n_k}\alpha)\geq (\inf_\T f) Lq_{n_k}\geq q_{n_k+1}^{2/3-\theta}.
\end{equation}
Let $S$ me maximal such that $\bigcup_{u<S}W_s\subset I$. Since for  $t\in I$, $T_t(x,s)\in I_b^f$, it follows that for every $u< S$, 
$$(\tilde{x}+uLq_{n_k}\alpha)_{Lq_{n_k},min}\notin \Big[-\frac{1} {q_{n_k}^{1+\delta}}, \frac{1} {q_{n_k}^{1+\delta}}\Big].
$$ 
This by \eqref{eq:f} and splitting into sums of length $q_{n_k}$ implies that for $u<S$, we have 
\be\label{eq:wssm}
|W_u|\leq S_{Lq_{n_k}}(f)(\tilde{x}+uLq_{n_k}\alpha)\leq Lq_{n_k}+  C\cdot Lq_{n_k}^{-\gamma(1+\delta)} \leq q_{n_k+1}^{2/3-\theta/2},
\ee
since by \eqref{eq:cD}, $Lq_n=q_{n+1}^{2/3-\theta}q_n\leq \frac{1}{2}q_{n+1}^{2/3-\theta}$ and $Lq_{n_k}^{-\gamma(1+\delta)}\leq q_{n_k+1}^{2/3-\theta}q_{n_k}^{-\gamma(1+\delta)}\leq  \frac{1}{2}q_{n_k+1}^{2/3-\theta/2}$.
 Therefore, 
$$
\Big|I\setminus\Big(\bigcup_{u<S}W_u \Big)\Big|<q_{n_k+1}^{2/3},
$$
and hence the contribution of the interval $I\setminus\Big(\bigcup_{u<S}W_u \Big)$ is negligible (i.e. $o(|I|)$) in \eqref{eq:10}, see Remark \ref{rem:prim}). Let $W_u=[a_u,b_u)$. Then, by the definition of $W_u$, $T_{a_u}(x,s)=(\tilde{x}+uLq_{n_k}\alpha,0)$.

Note that since $q_{n_k}\alpha<0$, the points $\{\tilde{x}+uLq_{n_k}\alpha\}_{u\leq S}$ satisfy $\tilde{x}>\tilde{x}+Lq_{n_k}\alpha>\ldots>\tilde{x}+SLq_{n_k}\alpha$ and the spacing between them is  $Lq_{n_k}\alpha$ (which is of order $q_{n_k+1}^{-1/3-\theta}$). For $u\leq S$, we will estimate 
\begin{equation}\label{eq:nn}
\sum_{p-m\in [a_u,b_u)}\psi(T_{p-m-a_u}(T_{a_u}x,s))\log p.
\end{equation}

Let  $z_k$ be any point such that $S_{q_{n_k}}(f')(z_k)=0$ and define: 
$$
Z_k:= \bigcup_{i<q_{n_k}} [-2q_{n_k+1}^{-1/10}+z_k+i\alpha,2q_{n_k+1}^{-1/10}+z_k+i\alpha]
$$
(this is the set where the derivative is small, see Lemma \ref{lem:smallder}). Let $u\in S'$ iff $T_{a_u}(x,s)\in Z_k$ or $T_{b_u}(x,s)\in Z_k$. By the above remark on the order and spacing between the points $\{\tilde{x}+uLq_{n_k}\alpha\}_{u< S}$ it follows that 
$$
|S'|\ll q_{n_k+1}^{1/3+\theta} q_{n_k+1}^{-1/10}.
$$
Therefore, by \eqref{eq:wssm} and Remark \ref{rem:prim} it follows that 
$$
\sum_{u\in S'}\sum_{p-m\in [a_u,b_u)}\psi(T_{p-m-a_u}(T_{a_u}x,s))\log p\ll \sum_{u\in S'}\sum_{p-m\in [a_u,b_u)}\log p\ll$$$$
q_{n_k+1}^{1/3+\theta} q_{n_k+1}^{-1/10}q_{n_k+1}^{2/3}=o(N_k),
$$
the last inequality since $N_k\geq \frac{q_{n_k+1}}{\log k}$.  Similarly, $$\sum_{u\in S' }\int_{[a_u,b_u)}\psi(T_t(x,s))\,d\,{\rm Leb}=o(N_k).
$$
 Therefore it is enough to estimate \eqref{eq:nn} for $u\in [0,S)\setminus S'$. Let $x_u:=\tilde{x}+uLq_{n_k}\alpha$. For such $u$ by the definition of $Z_k$ and Lemma \ref{lem:smallder} we know that 
\begin{equation}\label{eq:lder}
|S_{q_{n_k}}(f')(x_u)|\geq q_{n_k+1}^{-1/10}.
\ee
 Let 
\be\label{eq:zl}
\gamma_1:=\Big[S_{q_{n_k}}(f)(x_u)\Big]^{-1}.
\ee
Note that by definition, $(x_u,0)=T_{a_u}(x,s)\in I_b^f$. Therefore, in particular (see the definition of $I_a$) it follows that $(x_u)_{q_{n_k},min}\geq q_{n_k}^{-1-\delta}$. So by \eqref{eq:f},
\be\label{eq:sol}
S_{q_{n_k}}(f)(x_u)\leq q_{n_k}+Cq_{n_k}^{\gamma(-1-\delta)}\in [\frac{q_{n_k}}{2},2q_{n_k}],
\ee
since we have chosen $\delta$ to satisfy $-\gamma(1+\delta)<1$. Moreover, by \eqref{eq:f'},
\be\label{eq:sol2}
|S_{q_{n_k}}(f')(x_u)|\leq  q_{n_k}^3.
\ee
 By \eqref{eq:cD} and \eqref{eq:jk} it follows that $H:=|b_u-a_u|=|W_s|\geq H_0(q_{n_k})$ and moreover, 
$$
a_u^{2/3-\theta}\leq N^{2/3-\theta}_k\leq q^{2/3-\theta
}_{n_{k+1}}\leq H. 
$$
So the assumptions of Proposition \ref{prop:cruc} are satisfied with $N=a_u$ and $H=b_u-a_u$. Let $\{I_i\}_{i=1}^v$ be the disjoint collection of intervals coming from Proposition \ref{prop:cruc} for $q=q_{n_k}$, $N=a_u$ and $H=b_u-a_u$ (and assume that $c_i$ is the midpoint of $I_i$). Fix $i\leq v$ and take all $p-m\in [a_u,b_u)$ such that 
\be\label{eq:ga}
\gamma_1(p-m-a_u) \mod 1\in I_i.
\ee
Since $|I_i|\leq \frac{2}{q^2}$ and by \eqref{eq:sol}, we get that there exists $M_{i,p}\in \N$ such that 
\be\label{eq:pak}
\Big|(p-m-a_u)-M_{i,p}S_{q_{n_k}}(f)(x_u)-\tilde{c_i}\Big|\ll q_{n_k}^{-1},
\ee
where $\tilde{c_i}:=S_{q_{n_k}}(f)(x_u)\cdot c_i$. Since $c_i$ is the midpoint of $I_i$ and all the intervals $\{I_i\}_{i=1}^v$ have equal length $\in [\frac{1}{2q^2_{n_k}},\frac{2}{q^2_{n_k}}]$, it follows that $\{\tilde{c_i}\}_{i=1}^v$ are equispaced $\in [0,S_{q_{n_k}}(f)(x_u)]$ and
 by \eqref{eq:sol},
 \be\label{eq:sol3}
\tilde{c_i}=i\xi_v\;\; \text{ with }\;\; 0<\xi_v\ll q_{n_k}^{-1}.
 \ee

By \eqref{eq:pak}, \eqref{eq:wssm} and \eqref{eq:sol},
$$
M_{i,p}\leq \frac{2(b_u-a_u)+S_{q_{n_k}}(f)(x_u)}{S_{q_{n_k}}(f)(x_u)}\ll {\frac{|W_u|}{q_{n_k}}+1\ll \frac{q_{n_k+1}^{2/3-\theta/2}}{q_{n_k}}}.
$$
If $j\leq  q_{n_k+1}^{2/3}$, $j=k_jq_{n_k}+r_j$ with $0\leq r_j<q_{n_k}$, then
$$
\|(x_u+j\alpha)-(x_u+r_j\alpha)\|\leq j\|q_{n_k}\alpha\|\leq q_{n_k+1}^{-1/3}.
$$
 Since $(x_u,0)\in I_b^f$ it follows that $x_u\notin I_a$ (see \eqref{eq:az}) and therefore

\begin{equation}\label{eq:asdd}
\{x_u+j\alpha\}_{j<q_{n_k+1}^{2/3}}\cap \Big[-\frac{1}{2}q_{n_k}^{-1-\delta},\frac{1}{2}q_{n_k}^{-1-\delta}\Big]=\emptyset.
\end{equation}
Let 
$$
\bar{\gamma}:=\Big[\frac{S_{q_{n_k}}(f')(x_u)(q_n\alpha)}{(S_{q_{n_k}}(f)(x_u))^2}\Big].
$$
Then by \eqref{eq:sol2},
\be\label{eq:barg}
|\bar{\gamma}|\leq \frac{q_{n_k}^3}{q_{n_k+1}}\;\; \Big(\leq q_{n_k}^{-6}\Big)
\ee
 By Lemma \ref{lem:appr}  with $L=\frac{1}{2}q_{n_k}^{1+\delta}$ (see \eqref{eq:asdd}), we get 
$$
M_{i,p}S_{q_{n_k}}(f)(x_u)=S_{M_{i,p}q_{n_k}}(f)(x_u)- \Big(M_{i,p}S_{q_{n_k}}(f)(x_u)\Big)^2\bar{\gamma}+
$$$$
{\rm O}\Big(L^3q_{n_k}^3M_{i,p}^3q_{n_k+1}^{-2}+M_{i,p}L^2q_{n_k}^2q_{n_k+1}^{-1}\Big).
$$
Note that by the bound on $M_{i,p}$ and by \eqref{eq:cD}, we get 
$$
L^3q_{n_k}^3M_{i,p}^3q_{n_k+1}^{-2}\ll q_{n_k}^{10}q_{n_k+1}^{-3\theta}\ll q_{n_k}^{-2}
$$
and
$$
M_{i,p}L^2q_{n_k}^2q_{n_k+1}^{-1}\ll q_{n_k}^6q_{n_k+1}^{-1/3}\leq 
q_{n_k}^{-2}.
$$

Therefore, by \eqref{eq:pak} and using $\bar{\gamma}\leq q_{n_k}^{-6}$, we get  
$$
\Big|(p-m-a_s)-\tilde{c_i}- S_{M_{i,p}q_{n_k}}(f)(x_u)- (p-m-a_s)^2\bar{\gamma}\Big|\ll q_{n_k}^{-1}.
$$
Let now $\{J_h\}_{h=1}^w$ be a disjoint collection of intervals of equal  length $\in [\frac{1}{2q^2_{n_k}},\frac{2}{q_{n_k}^2}]$ covering $\T$ (with midpoints $d_h$). Define
$\gamma_2:=\frac{\bar{\gamma}}{S_{q_{n_k}}(f)(x_u)}$. Let 
 $h$ and $p$ (which we already assume satisfies \eqref{eq:ga}) satisfy
\be\label{eq:g2}
(p-m-a_u)^2\gamma_2\mod 1\in J_h.
\ee

Using \eqref{eq:sol} implies that, for some $R_{h,i,p}\in \N$
$$
|(p-m-a_u)^2\bar{\gamma}-\tilde{d_i}-R_{h,i,p}S_{q_{n_k}}(f)(x_u)|\ll q_{n_k}^{-1},
$$
where (analogously to $\{\tilde{c_i}\}$) $\tilde{d_h}:=S_{q_{n_k}}(f)(x_u)\cdot d_h$. Since $d_h$ is the midpoint of $J_h$ and all the intervals $\{J_h\}_{h=1}^w$ have equal length $\in [\frac{1}{2q^2_{n_k}},\frac{2}{q^2_{n_k}}]$, it follows that $\{\tilde{d_h}\}_{h=1}^w$ are equispaced $\in [0,S_{q_{n_k}}(f)(x_u)]$ and
 by \eqref{eq:sol},
 \be\label{eq:sol4}
d_h=h\zeta_w\;\; \text{ with }\;\; 0<\zeta_w\ll q_{n_k}^{-1}.
 \ee
Moreover, by the definition of $\bar{\gamma}$ (see also \eqref{eq:barg}) and \eqref{eq:wssm} and \eqref{eq:cD}
$$
|R_{h,i,p}|\ll q_{n_k}^{-1}+ \frac{\tilde{d_i}}{S_{q_{n_k}}(f)(x_u)}+ \frac{(b_u-a_u)^2q_{n_k}^3}{q_{n_k+1}}\ll 2+ q_{n_k+1}^{1/3-\theta}q_{n_k}^3\leq q_{n_k+1}^{1/3-\theta/2}.
$$
Therefore, for $p$ satisfying \eqref{eq:ga} and \eqref{eq:g2}, 
$$
\Big|(p-m-a_u)-\tilde{c_i}-\tilde{d_h}- S_{M_{i,p}q_{n_k}}(f)(x_u)-R_{h,i,p}S_{q_{n_k}}(f)(x_u)\Big|\ll q_{n_k}^{-1}.
$$
By the bound on $R_{h,i,p}$ and $M_{i,p}$, \eqref{eq:asdd} and \eqref{eq:f'},

$$
\Big|R_{h,i,p}S_{q_{n_k}}(f)(x_u)-S_{R_{h,i,p}q_{n_k}}(f)(x_u+M_{i,p}q_{n_k}\alpha)\Big|\leq$$$$
 \sum_{u=0}^{R_{h,i,p}-1}|S_{q_{n_k}}(f)(x_u)-S_{q_{n_k}}(f)(x_u+(M_{i,p}+u)q_{n_k}\alpha)|\leq 
$$$$
R_{h,i,p} q_{n_k}^3\frac{M_{i,p}+R_{i,p}}{q_{n_k+1}}\ll q_{n_k+1}^{-\theta}q_{n_k}^3\ll q_{n_k}^{-1},
$$
the last inequality by \eqref{eq:cD}.
Therefore and by cocycle identity it follows that for $p$ satisfying \eqref{eq:ga} and \eqref{eq:g2},
$$
\Big|(p-m-a_u)-\tilde{c_i}-\tilde{d_h}- S_{(R_{h,i,p}+M_{i,p})q_{n_k}}(f)(x_u)\Big|\ll q_{n_k}^{-1}
$$
Hence for such $p$, 
 $$
 T_{p-m-a_u}(T_{a_u}x,s)=T_{p-m-a_u}(x_u,0)=T_{\tilde{c_i}+\tilde{d_h}+{\rm O}(q_{n_k}^{-1})}\Big(x_u+(R_{h,i,p}+M_{i,p})q_{n_k}\alpha,0\Big)
$$
Since $R_{h,i,p}+M_{i,p}<q_{n_k+1}^{3/4}$, 
$$
\|(R_{h,i,p}+M_{i,p})q_{n_k}\alpha\|\ll q_{n_k+1}^{1/4}\leq q_{n_k}^{-3},
$$
the last inequality by \eqref{eq:cD}.
Therefore, by the continuity of $\psi$ (see conditions i. and ii. in the definition of $\bar{\psi}$) for $p$ satisfying \eqref{eq:ga} and \eqref{eq:g2},
$$
|\psi(T_{p-m-a_u}(T_{a_u}x,s))-\psi(T_{\tilde{c_i}+\tilde{d_h}}(x_u,0))|=o(1).
$$
Hence and by  \eqref{eq:ga} and \eqref{eq:g2}, 
$$
\sum_{p-m\in [a_u,b_u)}\psi(T_{p-m-a_u}(T_{a_u}x,s))\log p=$$$$
\sum_{i=1}^v\sum_{h=1}^w\psi(T_{\tilde{c_i}+\tilde{d_h}}(x_u,0))\Big[\sum_{p-m\in[a_u,b_u)}\chi_{I_i\times J_h}(\gamma_1(p-m-a_u),\gamma_2(p-m-a_u)^2)\Big]+ $$$$
o(\sum_{p-m\in [a_u,b_u)}\log p).
$$
 By Lemma \ref{lem:shit} the last term is $o(|W_u|)$ and hence can be neglected.

Notice that by \eqref{eq:sol}, \eqref{eq:sol2} and \eqref{eq:lder}, 
$$
|\gamma_2|=\Big|\frac{\bar{\gamma}}{S_{q_{n_k}}(f)(x_u)}\Big|=\Big|\frac{S_{q_{n_k}}(f')(x_u)\|q_{n_k}\alpha\|}{(S_{q_{n_k}}(f)(x_u))^3}\Big|\in [ 4q_{n_k}^{-3}q_{n_k+1}^{-1-1/10},2q_{n_k+1}^{-1}] 
$$
and so, for every $0\neq r\leq \log^B N_k$ (recall that $N_k\leq q_{n_k+1}$)
$$
\|r\gamma_2\|=r|\gamma_2|\geq \frac{4\log^Bq_{n_k+1}}{q_{n_k}^3q_{n_k+1}^{1+1/10}}\geq \frac{1}{|W_u|^2},
$$
the last inequality by the bound on $|W_u|$ (see \eqref{eq:jk}).
So by Proposition \ref{prop:cruc} (recall that $m$ is fixed), 
$$
\sum_{p-m\in[a_u,b_u)}\chi_{I_i\times J_h}(\gamma_1(p-m-a_u),\gamma_2(p-m-a_u)^2)=$$$$
\lambda(J_h)\sum_{p-m\in[a_u,b_u)}\chi_{I_i}(\gamma_1(p-m-a_u))+
{\rm O}(|W_u|q_{n_k}^{-6})
$$
Let $B_i:=\sum_{p-m\in[a_u,b_u)}\chi_{I_i}(\gamma_1(p-m-a_u))\log p$.

Then (recall that $|I_i|,|J_h|\geq \frac{1}{2q^2_{n_k}}$, and so $v\cdot w\leq 4q_{n_k}^2$)
$$
\sum_{p-m\in [a_u,b_u)}\psi(T_{p-m-a_u}(T_{a_u}x,s))\log p=\lambda(J_h)\sum_{i=1}^vB_i\sum_{h=1}^w\psi(T_{\tilde{c_i}+\tilde{d_h}}(x_u,0))+ {\rm O}(|W_u|q_{n_k}^{-2}).
$$
Note that by \eqref{eq:sol4}, $|\tilde{d_{h+1}}-\tilde{d_h}|=\zeta_w\leq q_{n_k}^{-1}$. Since $\psi$ is continuous along the flow direction (see i. and ii.), we have 
$$
\Big|\lambda(J_h)\sum_{h=1}^w\psi(T_{\tilde{d_h}+\tilde{c_i}}(x_u,0))-\frac{1}{\tilde{d_w}}\int_{0}^{\tilde{d_w}}\psi(T_{t+\tilde{c_i}}(x_u,0)dt\Big|=o(1).
$$
Moreover, 
$$
\Big|\frac{1}{\tilde{d_w}}\int_{0}^{\tilde{d_w}}\psi(T_{t+\tilde{c_i}}(x_u,0)dt\Big|=o(1).
$$
Indeed, notice that $\tilde{d_w}\in[S_{q_{n_k}}(f)(x_u)-q_{n_k}^{-1/2}, S_{q_{n_k}}(f)(x_u)]$. Therefore, up to an error of order $o(1)$, the above integral is equal to 
$$
\Big|\frac{1}{S_{q_{n_k}}(f)(x_u)}\int_{0}^{S_{q_{n_k}}(f)(x_u)}\psi(T_{t+\tilde{c_i}}(x_u,0)dt\Big|.
$$
The statement then follows by Lemma \ref{lem:navi} and \eqref{eq:sol} (recall that $x_u\in I_b$).
Therefore and by Lemma \ref{lem:shit},
$$
 \lambda(J_1)\sum_{i=1}^vB_i\sum_{h=1}^w\psi(T_{\tilde{c_i}+\tilde{d_h}}(x_u,0))=o(\sum_{i=1}^vB_i)=o(\sum_{p-m\in[a_u,b_u)}\log p )= o(|W_u|).
$$
This shows that \eqref{eq:nn} is $o(|W_u|)=o(b_u-a_u)$. Summing over $u$, we get 
$$
|\sum_{p-m\in I}\psi(T_{p-m}(x,s))\log p|=o(|I|).
$$
Note that since $I\subset B$, it follows that 
$$
\{T_{w}(x,s)\}_{w\in I}\subset I_b^f
$$
equivalently, for $w\in [0,|I|]$,
\be\label{eq>zd2}
T_w(T_{u_1}(x,s))\notin I_a^f
\ee
where $I=[u_1,u_2]$. In particular, let $T_{u_1}(x,s)=(\tilde{x},\tilde{s})$. Then \eqref{eq>zd2} implies that \eqref{eq:tz} is satisfied with $z=+$. So by Proposition \ref{prop:fos}, 
$$
\Big|\int_{I}\psi(T_{t}(x,s))dt\Big|=
\Big|\int_{0}^{|I|}\psi(T_{t}(\tilde{x},\tilde{s})dt\Big|=o(|I|).
$$
 This finishes the proof of \eqref{eq:10} and hence also the proof of the main theorem. So it remains to prove the {\bf CLAIM}.
\end{proof}
\begin{proof}[Proof of the {\bf CLAIM}]
Assume WLOG that $q_{n_k}\alpha<0$. 
We first prove $P1$.  Note that $T_{t}(x,s)\in I_a^f$ is equivalent to 
$x+N(x,s,t)\alpha\in I_a$. The function $N(x,s,\cdot)$ is a locally constant jump function (with jump size $1$). Let $t_0$ be the first such that $x+N(x,s,t_0)\alpha\in I_a$. By the definition of $I_a$ (it is a tower) and $t_0$, this means $x+N(x,s,t_0)\alpha\in \Big[-\frac{1}{q_{n_k}^{1+\delta}}, \frac{1}{q_{n_k}^{1+\delta}}\Big]-(q_{n_k}-1)\alpha$. Then, since $I_a$ is a tower, for every $j\leq q_{n_k}-1$, $x+N(x,s,t_0)\alpha+j\alpha\in I_a$. If $x+N(x,s,t_0)\alpha+rq_{n_k}\alpha\in I_a$, equivalently $x+N(x,s,t_0)\alpha+rq_{n_k}\alpha\in \Big[-\frac{1}{q_{n_k}^{1+\delta}}, \frac{1}{q_{n_k}^{1+\delta}}\Big]-(q_{n_k}-1)\alpha$, then for every $j\leq q_{n_k}-1$, $x+N(x,s,t_0)\alpha+(rq_{n_k}+j)\alpha\in I_a$.

Notice that each return to $\Big[-\frac{1}{q_{n_k}^{1+\delta}}, \frac{1}{q_{n_k}^{1+\delta}}\Big]-(q_{n_k}-1)\alpha$ shifts the point by $q_{n_k}\alpha$. Since $S_{N(x,s,N_k)}\leq (\inf_\T f)N_k\leq q_{n_k+1}/3$, it follows that after leaving $\Big[-\frac{1}{q_{n_k}^{1+\delta}}, \frac{1}{q_{n_k}^{1+\delta}}\Big]-(q_{n_k}-1)\alpha$, we will not (by shifting over $q_{n_k}\alpha$) return to it before time $S_{N(x,s,N_k)}(f)(x)$. Hence indeed the set $A$ is an interval. Analogously we show that $B$ is an interval. This finishes the proof of P1. 

For P2., we analogously notice that $T_t(x,s)\in [-\frac{1}{4q_{n_k+1}},\frac{1}{4q_{n_k+1}}]^f$, implies that $x+N(x,s,t)\alpha\in [-\frac{1}{4q_{n_k+1}},\frac{1}{4q_{n_k+1}}]$. Note that there is at most one $r< q_{n_k+1}/2$ such that $x+r\alpha\in [-\frac{1}{4q_{n_k+1}},\frac{1}{4q_{n_k+1}}]$. Indeed:
$$
\|(x+r\alpha)-(x+r'\alpha)\|\geq \|q_{n_k}\alpha\|\geq \frac{1}{q_{n_k+1}}.
$$
 Since $N(x,s,N_k)\leq q_{n_k+1}/3$, it follows that there is at most one value $N\leq q_{n_k+1}/3$ for which $x+N\alpha\in [-\frac{1}{4q_{n_k+1}},\frac{1}{4q_{n_k+1}}]$. This finishes the proof of P2.
 
Note that the first part of P3. is an immediate consequence of P1. and P2. So it remains to show that $|A\setminus A_0|=o(N_k)$. Let 
$$
\chi_k(x)=\chi_{I_a}(x)-\chi_{[-\frac{1}{4q_{n_k+1}},\frac{1}{4q_{n_k+1}}]}(x),
$$
and let $f_k(x):=\chi_k(x)\cdot f(x)$.
By the definition of the special flow and $A$ and $A_0$ one needs to show that 
$$
|S_{N(x,s,N_k)}(f_k)(x)|=o(N_k).
$$
Note that $(\inf_\T f)N(x,s,N_k)\leq S_{N(x,s,N_k)}(f)(x)\leq N_k+s\leq cq_{n_k+1}+s\leq (\inf_\T f)q_{n_k+1}$ (by decreasing $c$ if necessary).
Therefore, by Remark \ref{rem:DK}\footnote{We remark that Lemma \ref{lem:DK} and Remark \ref{rem:DK} hold also for the function $f_k$ instead of $f$. The proofs of them for $f_k$ are the same as the corresponding proofs for $f$.}
$$
|S_{N(x,s,N_k)}(f_k)(x)|\leq C2^nq_n +C\cdot n\cdot \frac{1}{q_{n_k}^{1+\gamma}}q_{n_k+1}+C\cdot n \cdot q_{n_k+1}^{-\gamma}=o(N_k),
$$
since $N_k\geq \frac{q_{n_k+1}}{\log n_k}$. This finishes the proof of the {\bf CLAIM}.

\end{proof}

\end{document}